\definecolor{darkred}{rgb}{0.7,0,0}
\definecolor{darkblue}{rgb}{0,0,0.7}
\patchcmd{\section}{\scshape}{\bfseries}{}{}
\renewcommand{\@secnumfont}{\bfseries}
\crefname{thm}{Theorem}{Theorems}
\crefname{prop}{Proposition}{Propositions}
\crefname{thmintro}{Theorem}{Theorems}
\crefname{propintro}{Proposition}{Propositions}
\crefname{lemma}{Lemma}{Lemmas}
\crefname{rem}{Remark}{Remarks}
\crefname{cor}{Corollary}{Corollaries}
\crefname{defi}{Definition}{Definitions}
\crefname{ex}{Example}{Examples}
\crefname{section}{Section}{Sections}
\newtheoremstyle{standard}{9pt}{9pt}{\itshape}{}{\bfseries}{.}{.5em}{}
\theoremstyle{standard}
\newtheorem{lemma}{Lemma}[section]
\newtheorem{thm}[lemma]{Theorem}
\newtheorem{prop}[lemma]{Proposition}
\newtheorem{cor}[lemma]{Corollary}
\newtheorem{thmintro}{Theorem}[section]
\newtheorem{propintro}[thmintro]{Proposition}
\newtheoremstyle{definition}{9pt}{9pt}{}{}{\bfseries}{.}{.5em}{}    
\theoremstyle{definition}
\newtheorem{defi}[lemma]{Definition}  
\newtheorem{rem}[lemma]{Remark}
\newtheorem{ex}[lemma]{Example}
\let\originalleft\left
\let\originalright\right
\renewcommand{\left}{\mathopen{}\mathclose\bgroup\originalleft}
\renewcommand{\right}{\aftergroup\egroup\originalright}
\newcommand{\A}{\mathcal{A}}
\newcommand{\B}{\mathcal{B}}
\newcommand{\C}{\mathcal{C}}
\newcommand{\D}{\mathcal{D}}
\newcommand{\E}{\mathcal{E}}
\newcommand{\F}{\mathcal{F}}
\newcommand{\I}{\mathcal{I}}
\newcommand{\K}{\mathcal{K}}
\renewcommand{\L}{\mathcal{L}}
\renewcommand{\O}{\mathcal{O}}
\newcommand{\Q}{\mathcal{Q}}
\renewcommand{\S}{\mathcal{S}}
\newcommand{\T}{\mathcal{T}}
\newcommand{\IN}{\mathds{N}}
\newcommand{\IZ}{\mathds{Z}}
\newcommand{\IS}{\mathbb{S}}
\DeclareMathOperator{\Top}{\mathsf{Top}}
\DeclareMathOperator{\Topf}{\Top^{\mathsf{f}}}
\DeclareMathOperator{\BiTop}{\mathsf{BiTop}}
\DeclareMathOperator{\CoTop}{\mathsf{CoTop}}
\DeclareMathOperator{\Haus}{\mathsf{Haus}}
\DeclareMathOperator{\CoHaus}{\mathsf{CoHaus}}
\DeclareMathOperator{\Set}{\mathsf{Set}}
\DeclareMathOperator{\Grp}{\mathsf{Grp}}
\DeclareMathOperator{\Qcoh}{\mathsf{Qcoh}}
\DeclareMathOperator{\Ab}{\mathsf{Ab}}
\DeclareMathOperator{\CoGrp}{\mathsf{CoGrp}}
\DeclareMathOperator{\Pos}{\mathsf{Pos}}
\DeclareMathOperator{\CoPos}{\mathsf{CoPos}}
\DeclareMathOperator{\Pre}{\mathsf{Pre}}
\DeclareMathOperator{\CoPre}{\mathsf{CoPre}}
\DeclareMathOperator{\Alg}{\mathsf{Alg}}
\DeclareMathOperator{\Mono}{\mathsf{Mono}}
\DeclareMathOperator{\CABA}{\mathsf{CABA}}
\DeclareMathOperator{\CoCABA}{\mathsf{CoCABA}}
\DeclareMathOperator{\Sh}{\mathsf{Sh}}
\DeclareMathOperator{\Rel}{\mathsf{Rel}}
\DeclareMathOperator{\Frm}{\mathsf{Frm}}
\DeclareMathOperator{\CoSh}{\mathsf{CoSh}}
\DeclareMathOperator{\Eq}{\mathsf{Eq}}
\DeclareMathOperator{\SupLat}{\mathsf{SupLat}}
\DeclareMathOperator{\Mod}{\mathsf{Mod}}
\DeclareMathOperator{\CoAlg}{\mathsf{CoAlg}}
\DeclareMathOperator{\CompHaus}{\mathsf{CompHaus}}
\DeclareMathOperator{\Cat}{\mathsf{Cat}}
\DeclareMathOperator{\CoCat}{\mathsf{CoCat}}
\DeclareMathOperator{\Meas}{\mathsf{Meas}}
\DeclareMathOperator{\Monad}{\mathsf{Monad}}
\DeclareMathOperator{\Law}{\mathsf{Law}}
\DeclareMathOperator{\Gra}{\mathsf{Gra}}
\DeclareMathOperator{\op}{op}
\DeclareMathOperator*{\colim}{colim}
\DeclareMathOperator{\Hom}{Hom}
\newcommand{\pr}{\mathrm{pr}}
\DeclareMathOperator{\id}{id}
\DeclareMathOperator{\ev}{ev}
\renewcommand{\c}{\mathrm{c}}
\newcommand{\wc}{\mathrm{wc}}
\newcommand{\s}{\mathrm{s}}
\begin{document}


\author{Martin Brandenburg}
\title{Large limit sketches and topological space objects}
\thanks{\emph{E-mail address:} \texttt{brandenburg@uni-muenster.de}}
\date{\today}


\begin{abstract}
For a (possibly large) realized limit sketch $\S$ such that every $\S$-model is \emph{small} in a suitable sense we show that the category of cocontinuous functors $\Mod(\S) \to \C$ into a cocomplete category $\C$ is equivalent to the category $\Mod_{\C}(\S^{\op})$ of $\C$-valued $\S^{\op}$-models. From this result we deduce universal properties of several examples of cocomplete categories appearing in practice. It can be applied in particular to infinitary Lawvere theories, generalizing the well-known case of finitary Lawvere theories.\\We also look at a large limit sketch that models $\Top$, study the corresponding notion of an internal \emph{net-based topological space object}, and deduce from our main result that cocontinuous functors $\Top \to \C$ into a cocomplete category $\C$ correspond to \emph{net-based cotopological space objects} internal to $\C$. Finally, we describe a limit sketch that models $\Top^{\op}$ and deduce from our main result that continuous functors $\Top \to \C$ into a complete category $\C$ correspond to \emph{frame-based topological space objects} internal to $\C$. Thus, we characterize $\Top$ both as a cocomplete and as a complete category. Thereby we get two new conceptual proofs of Isbell's classification of cocontinuous functors $\Top \to \Top$ in terms of topological topologies.
\end{abstract}


\maketitle


\section{Introduction}

Since Freyd's seminal paper on algebra-valued functors \cite{freyd} there has been an interest to classify certain functors between algebraic categories in terms of coalgebraic structures. For example, when $\C$ is a cocomplete category, a right adjoint functor $\C \to \Grp$ corresponds to a cogroup internal to $\C$. In fact, a cogroup structure on an object $X \in \C$ endows the sets $\Hom(X,Y)$ with a group structure, so that we get a functor $\Hom(X,-) : \C \to \Grp$. For example, the identity functor $\Grp \to \Grp$ is represented by the free group on one generator with its canonical cogroup structure. A right adjoint functor $\C \to \Grp$ can equivalently be described by a cocontinuous functor
\[\Grp \to \C,\]
its left adjoint. Hence, $\Grp$ is the universal example of a cocomplete category with an internal cogroup object. We can see this as a universal property and hence a characterization of $\Grp$ inside the $2$-category of cocomplete categories.

The classical Eilenberg-Watts Theorem \cite{eilenberg} has a similar flavor. It establishes an equivalence between cocontinuous functors ${}_R{\Mod} \to {}_S{\Mod}$ and $(S,R)$-bimodules. More generally, for any preadditive category $\C$ cocontinuous functors ${}_R{\Mod} \to \C$ correspond to $R$-right modules internal to $\C$. This is a $2$-categorical characterization of ${}_R{\Mod}$.

However, there is nothing special about groups and modules here, as it turns out that for every finitary algebraic theory (i.e.\ Lawvere theory) $\L$ and every cocomplete category $\C$ the cocontinuous functors
\[\Mod(\L) \to \C\]
correspond to $\L$-coalgebras internal to $\C$, see \cite[Theorem 13]{poinsot}. Namely, any $\L$-coalgebra $N$ induces a tensor product functor $N \otimes_{\L} - : \Mod(\L) \to \Set$ that is left adjoint to a suitable representable functor $\Hom(N,-) : \C \to \Mod(\L)$. The main goal of this paper is to generalize this result to infinitary Lawvere theories, and in fact to arbitrary limit sketches. This allows us to classify cocontinuous functors not just on finitary algebraic categories, but also on infinitary algebraic categories such as $\SupLat$, $\CompHaus$ and on (infinitary) essentially algebraic categories such as $\Pos$, $\Top$, $\Top^{\op}$.
 
Sketches were introduced and studied by Ehresmann and his school \cite{ehresmann1,ehresmann2,ehresmann3} in order to specify a species of mathematical structure. They provide a syntactic representation of various types of structured categories \cite{barrwells}. We refer to \cite{wells} for a summary of their theory and many more references to the literature. Briefly, a sketch $(\E,\S)$ is defined as a category $\E$ with a class $\S$ of distinguished cones and cocones in it. It is called small when $\E$ is small and $\S$ is a set.

Sketches are of interest since many categories are equivalent to the category $\Mod(\S)$ of models of a sketch $\S$. Here, models are functors $\E \to \Set$ that map the distinguished (co)cones to (co)limit (co)cones in $\Set$. We may also replace $\Set$ by any category $\C$ to obtain the notion of a $\C$-valued model. Specifically, Lair has shown that a category is the category of models of a small sketch if and only it is accessible \cite{lair}, and Gabriel-Ulmer have shown that it is the category of models of a small limit sketch (i.e.\ with cones only) if and only it is locally presentable \cite{gabrielulmer, adamekrosicky}.

In particular, lots of categories encountered in practice can be described by small sketches, and large sketches can model even more categories \cite{guitart}. The idea to represent specific types of categories as categories of structure-preserving functors goes back to Lawvere's thesis \cite{lawvere}, and in fact Lawvere theories and their algebras are special instances of small limit sketches and their models.

For a \emph{small} limit sketch $(\E,\S)$ it is already known that cocontinuous functors
\[\Mod(\S) \to \C\]
correspond to $\S^{\op}$-models internal to $\C$, where $\S^{\op}$ is the corresponding colimit sketch in $\E^{\op}$. This has been shown by Pultr \cite[Theorem 2.5]{pultr}, a more recent reference is \cite[Theorem 2.2.4]{chirvasitu}. The proof relies on the universal property of the cocompletion $\Hom(\E,\Set)$ of the small category $\E^{\op}$ as well as the existence of a reflection $\Hom(\E,\Set) \to \Mod(\S)$. The latter is not available for large sketches (in fact, $\Mod(\S)$ may fail to be cocomplete), so that the proof needs to be modified.

\textbf{Main results.} We recall the basic theory of sketches in \cref{sec:limit}. In \cref{sec:small} we introduce the notion of a \emph{small model}, which is then used in \cref{sec:univ} to show our main result.
 
\begin{thmintro}[\cref{main}] \label{mainintro}
Let $\S$ be a (possibly large) realized limit sketch such that every $\S$-model is small. Then, for any cocomplete category $\C$ there is a natural equivalence of categories
\[\Hom_{\c}(\Mod(\S),\C) \simeq \Mod_{\C}(\S^{\op}).\]
Thus, if $\Mod(\S)$ is also cocomplete, $\Mod(\S)$ is the universal example of a cocomplete category with a model of the colimit sketch $\S^{\op}$, namely the Yoneda model $Y$ with $Y(A) \coloneqq \Hom(A,-)$.
\end{thmintro}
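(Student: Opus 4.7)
The plan is to construct quasi-inverse functors between the two categories, mimicking the classical argument that presents cocontinuous functors out of a presheaf category as left Kan extensions along the Yoneda embedding, but carefully controlling sizes using the small-model hypothesis.

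First I would establish the forward functor $\Phi \colon \Hom_{\c}(\Mod(\S),\C) \to \Mod_{\C}(\S^{\op})$ defined by $F \mapsto F \circ Y$, where $Y \colon \E^{\op} \to \Mod(\S)$ sends $A$ to $\Hom(A,-)$. I must verify that $Y$ is an $\S^{\op}$-model, i.e., that it sends the cones in $\S$ to colimit cocones in $\Mod(\S)$. This follows from Yoneda: for a cone $(A \to A_i)$ in $\S$ and any $M \in \Mod(\S)$ we have $\Hom(Y(A),M) = M(A) = \lim_i M(A_i) = \lim_i \Hom(Y(A_i),M)$, so $Y(A) = \colim_i Y(A_i)$. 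Since colimits in $\Mod(\S)$ are preserved by any cocontinuous $F$, the composite $F \circ Y$ is indeed in $\Mod_{\C}(\S^{\op})$.

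Next I would construct the quasi-inverse $\Psi$. Given $N \in \Mod_{\C}(\S^{\op})$, I want to define a cocontinuous $\Psi(N) \colon \Mod(\S) \to \C$ by a coend/Kan-extension formula $\Psi(N)(M) = \colim_{(A,x) \in \E/M} N(A)$, where $\E/M$ denotes the category of elements indexing a canonical presentation of $M$ as a colimit of representables $Y(A) \to M$. The small-model hypothesis, recalled from Section 3, is exactly what guarantees that this indexing category is essentially small, so that the colimit exists in the cocomplete category $\C$. This construction is functorial in $M$ in the usual way, and $\Psi$ is functorial in $N$ since each $N(f)$ extends to a natural transformation between the colimit diagrams.

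The main obstacle, and the heart of the argument, is showing that $\Psi(N)$ is cocontinuous. I would reduce this to the density theorem: every $M \in \Mod(\S)$ is canonically the colimit of the small diagram of representables mapping into it, and this colimit presentation is compatible with colimits in $\Mod(\S)$. Using this, a colimit $M = \colim_j M_j$ in $\Mod(\S)$ can be re-expressed as a (small) colimit of representables, and a standard Fubini-style interchange of colimits gives $\Psi(N)(M) \cong \colim_j \Psi(N)(M_j)$. Once cocontinuity is in hand, the two triangle identities are straightforward: $\Psi(N) \circ Y \cong N$ because the presentation of $Y(A)$ has $(A,\id)$ as a terminal object, collapsing the colimit to $N(A)$; and $\Psi(F \circ Y) \cong F$ because any cocontinuous $F$ commutes with the canonical colimit presentation $M = \colim Y(A)$, giving $\Psi(F \circ Y)(M) = \colim F(Y(A)) = F(\colim Y(A)) = F(M)$. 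Naturality in $\C$ of the whole equivalence is then automatic from the formulas. The final statement about the universal property follows by reading off $\C = \Mod(\S)$ and observing that $Y$ corresponds to the identity functor under $\Phi$.
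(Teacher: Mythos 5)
Your overall strategy coincides with the paper's: the forward direction is $F \mapsto F \circ Y$ with the Yoneda-model computation you give (this is exactly \cref{comod}), and the inverse is a weighted-colimit/tensor-product functor whose triangle identities are checked on representables and propagated along colimit presentations (\cref{tpex}, \cref{tpco}, \cref{maintechnical}). So the architecture is right.

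There is, however, a genuine gap in your construction of $\Psi$. The small-model hypothesis (\cref{smalldef}) says only that there \emph{exists some} small diagram of representables whose colimit \emph{in $\Mod(\S)$} is $M$; it does \emph{not} say that the category of elements $\E/M$ is essentially small. When $\E$ is large, $\E/M$ is in general a proper class (it has an object $(A,x)$ for every $A \in \E$ and $x \in M(A)$, and in the paper's main application to $\Top$ the sets $M(A)$ are nonempty for proper-class-many $A$), so the formula $\colim_{(A,x)\in\E/M} N(A)$ is a large colimit that need not exist in the small-cocomplete category $\C$. The paper is explicit that the underlying functor of a small model need not be a small functor, and that it is even unclear whether a small model always admits a small full subcategory $\E' \subseteq \E$ controlling its morphisms (see the remarks around \cref{smallchar}). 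If you instead take the colimit over a \emph{chosen} small presentation $M \cong \colim_i Y(A_i)$, you must prove that the result is independent of the choice and functorial in $M$; neither is addressed, and neither is automatic, since two small presentations of the same $M$ are not related by any functor between their index categories. The paper resolves precisely this point by defining $N \otimes_{\S} M$ through a universal property, as a representing object for $T \mapsto \Hom\bigl(M,\Hom(N(-),T)\bigr)$ (\cref{tp}, \cref{tpex}): representability is verified using any one small presentation, while well-definedness and functoriality in both variables then come for free from the Yoneda lemma. The same universal property also replaces your Fubini interchange in the cocontinuity step: since $\Hom(N(-),T)$ is itself an $\S$-model by \cref{modyon}, the universal property of a colimit $M = \colim_j M_j$ in $\Mod(\S)$ applies to it directly and gives $\Hom(N \otimes_{\S} M,T) \cong \lim_j \Hom(N \otimes_{\S} M_j,T)$ without re-decomposing $M$ into representables (\cref{tpco}). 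If you restate $\Psi$ via this universal property, the rest of your argument goes through essentially as written.
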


Here, $\Hom_\c$ denotes the category of cocontinuous functors. Thus, \cref{mainintro} gives a universal property of $\Mod(\S)$ as a cocomplete category. This can be applied to lots of specific examples, which we showcase in \cref{sec:examples}. We also prove a more precise result when not every $\S$-model is assumed to be small (see \cref{maintechnical}). For the empty sketch this yields the well-known free cocompletion of a category (locally small, but not necessarily small).
 
Notice that \cref{mainintro} equivalently says that (under all the mentioned assumptions) continuous functors $\Mod(\S)^{\op} \to \D$ into a complete category $\D$ correspond to $\D$-valued models of $\S$, which means that $\Mod(\S)^{\op}$ contains the \emph{generic model} of $\S$ as defined in \cite[Section 4.3]{barrwells}. Thus, we generalize the existence of generic models from the small case to a more general case.

In \cref{mainintro} it turns out that every cocontinuous functor $\Mod(\S) \to \C$ is already a left adjoint. This motivates the notion of a \emph{strongly compact category}, studied in \cref{sec:compact}, which is a variant of Isbell's notion of a compact category \cite{isbell1}: every cocontinuous functor on such a category must be a left adjoint. We have the following classification.
 
\begin{propintro}[\cref{modiscompact}] \label{compchar}
A category is strongly compact if and only if it is the category of models of a realized limit sketch such that every model is small.
\end{propintro}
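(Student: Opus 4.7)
The plan is to deduce both implications from \cref{mainintro}.

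\textbf{($\Leftarrow$)} Assume $\C \simeq \Mod(\S)$ for a realized limit sketch $\S$ all of whose models are small, and let $F : \C \to \D$ be cocontinuous with $\D$ cocomplete. By \cref{mainintro}, $F$ corresponds to the $\S^{\op}$-model $M := F \circ Y : \S^{\op} \to \D$. I would define the candidate right adjoint by
\[
G(d)(A) := \Hom_\D(M(A), d).
\]
That $G(d) \in \Mod(\S)$ follows because $M$ is a model of the colimit sketch $\S^{\op}$: any distinguished cone of $\S$, now regarded as a distinguished cocone of $\S^{\op}$, is sent by $M$ to a colimit cocone in $\D$, and $\Hom_\D(-, d)$ turns colimits into limits. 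For the adjunction $F \dashv G$, the key input is the canonical density presentation $X \cong \colim_{Y(A) \to X} Y(A)$ of any $X \in \Mod(\S)$, which is part of the proof of \cref{mainintro}. With it one computes
\[
\Hom_\D(F(X), d) \cong \Hom_\D(\colim F(Y(A)), d) \cong \lim \Hom_\D(M(A), d) \cong \lim G(d)(A) \cong \Hom_\C(X, G(d)),
\]
using cocontinuity of $F$, the identity $M = F \circ Y$, the definition of $G$, and Yoneda ($\Hom_\C(Y(A), N) = N(A)$). Naturality in $X$ and $d$ is straightforward.

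\textbf{($\Rightarrow$)} Assume $\C$ is strongly compact, and in particular cocomplete. Take $\S := \C^{\op}$, declared a limit sketch whose distinguished cones are exactly those (small) cones in $\C^{\op}$ arising from (small) colimit cocones in $\C$; these are genuine limit cones in $\C^{\op}$, so the sketch is realized. An $\S$-model is a functor $M : \C^{\op} \to \Set$ sending colimits in $\C$ to limits in $\Set$, i.e.\ a continuous functor. The crucial step is representability: viewing $M$ as $M^{\op} : \C \to \Set^{\op}$, this is cocontinuous into the cocomplete category $\Set^{\op}$, so strong compactness yields a right adjoint $H : \Set^{\op} \to \C$. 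Evaluating the adjunction at the singleton $1 \in \Set$ and using $\Hom_{\Set^{\op}}(-, 1) \cong \id_{\Set}$ gives $M(X) \cong \Hom_\C(X, H(1))$ naturally in $X$. Hence the Yoneda embedding $\C \to \Mod(\S)$ is essentially surjective and so an equivalence. Every model is representable, and representables are small in the sense of \cref{sec:small}.

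The main obstacle is the density step $X \cong \colim_{Y(A) \to X} Y(A)$ in ($\Leftarrow$): in the large-sketch setting there is no reflector from $\Fun(\S, \Set)$ onto $\Mod(\S)$, as noted in the discussion preceding \cref{mainintro}, so this density cannot be lifted from the presheaf category but must be obtained directly from smallness of $\S$-models — presumably as an ingredient of the proof of \cref{mainintro} itself, on which we rely here.
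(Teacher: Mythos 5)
Your proof follows essentially the same route as the paper's (\cref{modiscompact}): the paper obtains ($\Rightarrow$) from \cref{isbelllimit}, whose content is exactly your construction of the sketch of all colimit cocones on $\C^{\op}$ together with the representability argument through $\Set^{\op}$, and obtains ($\Leftarrow$) by observing that \cref{main} exhibits $F$ as $N \otimes_{\S} -$ with $N = F \circ Y$, which is left adjoint to $T \mapsto \Hom(N(-),T)$ --- precisely your $G$. Three points deserve attention. First, in ($\Leftarrow$) you only verify that cocontinuous functors into \emph{cocomplete} targets are left adjoints, whereas strong compactness quantifies over arbitrary targets; the paper bridges this gap with \cref{isbellchar} (the implications $(2) \Rightarrow (3) \Rightarrow (1)$ there), and you need to cite or reprove that reduction to actually conclude strong compactness. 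Second, the ``canonical density presentation'' $X \cong \colim_{Y(A) \to X} Y(A)$ over the full category of elements is not established in the paper and is more than you need: by \cref{smalldef} a small model comes equipped with \emph{some} small colimit presentation by representables in $\Mod(\S)$, and your adjunction computation goes through verbatim with any such presentation (naturality in $X$ then requires a one-line direct check, e.g.\ by describing where a morphism $F(X) \to d$ is sent componentwise). Third, your parenthetical claim that a strongly compact category is ``in particular cocomplete'' is not known to hold --- the paper explicitly states that it is open whether every strongly compact category is cocomplete --- but your argument for ($\Rightarrow$) never actually uses it, since the sketch of all existing small colimit cocones works regardless.
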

  
We specialize \cref{mainintro} to infinitary Lawvere theories as well, whose basic theory we recall in the expository \cref{sec:lawapp}, in particular their equivalence to monads on $\Set$. This yields the following result that extends the already known finitary case.

\begin{thmintro}[\cref{lawue}]
Let $\L$ be an infinitary Lawvere theory. Then $\Mod(\L)$ is cocomplete, and for every cocomplete category $\C$ we have
\[\Hom_{\c}(\Mod(\L),\C) \simeq \Mod_{\C}(\L^{\op}).\]
\end{thmintro}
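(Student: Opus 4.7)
The plan is to derive this as a direct application of \cref{mainintro}, once we exhibit $\L$ as a realized limit sketch whose models are all small, and verify cocompleteness of $\Mod(\L)$ independently so that the second sentence of \cref{mainintro} applies.

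First, I would equip $\L$ with a canonical structure of a realized limit sketch: take the underlying category to be $\L$ itself and take the distinguished cones to be precisely the (chosen) product cones that come with the Lawvere theory structure. Since $\L$ by definition has these products, the sketch is realized. By the very definition of a model of an infinitary Lawvere theory as a product-preserving functor $\L \to \Set$, we have $\Mod(\L) = \Mod(\S)$ for this sketch $\S$, and likewise $\Mod_{\C}(\L^{\op}) = \Mod_{\C}(\S^{\op})$.

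Second, I would verify the key smallness hypothesis of \cref{mainintro}, namely that every $\S$-model is small in the sense of \cref{sec:small}. The crucial structural fact about an infinitary Lawvere theory is the existence of a distinguished ``generic'' object $t \in \L$ such that every object of $\L$ is a chosen power $t^I$ for some set $I$ (whose size may be bounded by a fixed cardinal, depending on the flavour of infinitary theory one uses; the argument is insensitive to this). Consequently any product-preserving functor $A \colon \L \to \Set$ is determined up to canonical natural isomorphism by the single set $A(t)$, from which the smallness criterion of \cref{sec:small} follows by a direct unpacking: the relevant class of natural transformations into $A$ is controlled by $A(t)$.

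Third, I would invoke the standard equivalence between infinitary Lawvere theories and monads on $\Set$, which will be recalled in \cref{sec:lawapp}. Under this equivalence, $\Mod(\L)$ is the category of algebras for a monad on $\Set$, and such categories are cocomplete by the usual monadicity argument (using cocompleteness of $\Set$ and the existence of reflexive coequalizers). Combining the three ingredients, \cref{mainintro} delivers the asserted equivalence $\Hom_{\c}(\Mod(\L),\C) \simeq \Mod_{\C}(\L^{\op})$ and identifies $\Mod(\L)$ as the universal cocomplete category equipped with an $\L^{\op}$-model, namely the Yoneda model.

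The only nontrivial step I anticipate is the smallness verification: one must be careful that the notion of ``small model'' introduced in \cref{sec:small} is genuinely implied by the ``determined by value on $t$'' property of $\L$-algebras, including for unbounded-arity theories. Once that is stated cleanly, the rest is essentially bookkeeping plus the citation of the monad–theory correspondence.
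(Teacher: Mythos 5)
Your overall strategy coincides with the paper's: realize $\L$ as the realized limit sketch of product cones (this is exactly \cref{lawdef}), prove that every model is small, get cocompleteness from the monad--theory correspondence and Linton's theorem, and then quote \cref{main}. The gap is in the smallness step, which you correctly identify as the crux but do not actually carry out. The assertion that a product-preserving $A : \L \to \Set$ is ``determined by $A(t)$'' is a statement about objects (and, as stated, is not even quite true: the operations $A(\omega) : A(t)^I \to A(t)$ are genuine extra structure on the set $A(t)$), whereas smallness in the sense of \cref{smalldef} requires exhibiting $A$ as a small colimit of representable models inside $\Mod(\L)$, or, via \cref{smallchar}, producing a \emph{small} full subcategory $\E' \subseteq \L$ such that restriction gives a \emph{bijection} $\Hom(M,N) \to \Hom(M|_{\E'},N|_{\E'})$ for every model $N$. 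Taking $\E'$ to be the full subcategory on $t$ alone only yields injectivity: a map $M(t) \to N(t)$ commuting with the unary operations need not commute with the $I$-ary ones for infinite $I$ (already for suplattices a map preserving the bottom element need not preserve suprema), so it need not extend to a morphism of models. Since the arities of a general infinitary Lawvere theory are unbounded, there is also no single cardinal bound letting you shrink $\L$ to a small subcategory once and for all; your parenthetical ``the argument is insensitive to this'' is precisely where the difficulty hides.

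There are two ways to close the gap. The paper's route (\cref{lawsmall}) is to use \cref{ismonadic}: under the equivalence $\Mod(\L) \simeq \Alg(\T)$ the representable models correspond to the free $\T$-algebras, and every $\T$-algebra is a reflexive coequalizer of a pair of maps between free algebras, which is exactly a small colimit presentation by representables, i.e.\ smallness by definition. Alternatively, one can argue model by model with \cref{smallchar}: for a fixed model $M$ take $\E'$ to be (a small skeleton of) the powers $X^I$ with $|I| \leq |M(X)|$; any $I$-ary operation $\omega$ evaluated at a tuple $a : I \to M(X)$ equals the $A'$-ary operation $\omega \circ X^{\bar a}$ evaluated at the inclusion of the image $A' \subseteq M(X)$, so commuting with operations of arity at most $|M(X)|$ already forces commuting with all operations, and the restriction map is bijective. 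Either argument completes your outline; without one of them the citation of \cref{main} is not yet justified. The remaining steps (the sketch structure on $\L$ and cocompleteness of $\Mod(\L)$ via Linton) are fine and agree with the paper.
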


We state this universal property also in terms of monads (see \cref{monue}) and apply it to several examples in \cref{sec:examples}. For example, $\Set^{\op} \simeq \CABA$ is the universal example of a cocomplete category with an internal co(atomic boolean algebra), $\SupLat$ is the universal example of a cocomplete category with an internal cosuplattice, and $\CompHaus$ is the universal example of a cocomplete category with an internal co(compact Hausdorff space).
 
Whereas \cref{mainintro} is \emph{abstract nonsense}, the classification of internal $\S^{\op}$-models for specific sketches $\S$ is a non-trivial task. For a basic example, the category of preorders $\Pre$ is the universal example of a cocomplete category with an internal copreorder. The classification of copreorders in $\Set$ by Lumsdaine \cite{lumsdaine} now yields $\Hom_{\c}(\Pre,\Set) \simeq \Mono(\Set)$ and $\Hom_{\c}(\Pos,\Set)  \simeq \Set$.

But examples are not limited to (finitary or infinitary) algebraic categories. In fact, topological spaces also enter the scene because of the following result, proven in \cref{sec:topsketch}.
  
\begin{thmintro}[\cref{topsketchthm,topsmall}]\label{kelley}
The characterization of topological spaces in terms of net convergence produces a large realized limit sketch $\T$ with
\[\Top \simeq \Mod(\T).\]
Moreover, every $\T$-model is small.
\end{thmintro}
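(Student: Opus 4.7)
The plan is to translate Kelley's characterization of topologies via net convergence directly into the language of limit sketches, and then verify that the resulting models are small.

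I would define the underlying (large) category $\E$ of $\T$ to contain a distinguished object $X$ (the generic carrier); for each directed preorder $D$, an object $X^D$ (intended as the set of $D$-nets); and an object $C_D$ for each $D$ (intended as the convergence relation). The arrows of $\E$ include the projections making $X^D$ a $D$-power, a monomorphism $C_D \to X^D \times X$, a constant-net morphism $X \to C_D$ witnessing the constancy axiom, reindexing arrows $\phi^* \colon C_D \to C_{D'}$ for every cofinal monotone $\phi \colon D' \to D$ (subnet closure), and a designated diagonal arrow encoding Kelley's iterated-limit axiom on the usual Kelley index set. The distinguished cones are the product cones realizing $X^D = \prod_D X$; equalizer cones forcing $C_D \hookrightarrow X^D \times X$; and, for each $D$, a (necessarily large) intersection cone expressing the converse subnet axiom \textemdash{} a pair $(\nu, x)$ lies in $C_D$ whenever every subnet $\nu \circ \phi$ admits a further subnet converging to $x$. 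Because we allow cones over large diagrams, this last cone is a legitimate part of the sketch.

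The equivalence $\Top \simeq \Mod(\T)$ then follows essentially from Kelley's theorem. A model $M \colon \E \to \Set$ gives a set $Y \coloneqq M(X)$, genuine power sets $M(X^D) = Y^D$ (because the product cones go to limit cones), and convergence subsets $M(C_D) \subseteq Y^D \times Y$; unpacking the designated arrows and cones yields exactly Kelley's four axioms, which by his theorem are equivalent to the data of a topology on $Y$. The other direction is standard, as is the identification of morphisms of models with continuous maps.

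For smallness, the point is that the model $M$ attached to $(Y, \tau)$ is determined by set-sized data. The main step will be a cofinality argument showing that, modulo the Kelley axioms encoded in the sketch, the entire functor $M$ is reconstructible from its values on a small subdiagram of $\E$ controlled by the neighborhood filters of points of $Y$ \textemdash{} equivalently, that the category of elements of $M$ admits a small cofinal subcategory. This is where I expect the main technical work to lie, since one must reduce `large' nets to `small' ones using precisely the sketch-encoded axioms. Once established, it yields smallness in whatever precise sense is fixed in \cref{sec:small}.
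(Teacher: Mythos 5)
Your overall strategy --- encode Kelley's net-convergence characterization as a limit sketch, then prove smallness by reducing continuity tests to nets indexed by neighbourhood filters --- is the same as the paper's, and your smallness plan is essentially the argument of \cref{topsmall} via \cref{smallchar}. But there is a genuine gap in how you encode the locality axiom. You propose a single ``(necessarily large) intersection cone'' and assert that cones over large diagrams are allowed; they are not: by \cref{sketchdef} and the stated conventions the category $\E$ and the class $\S$ may be large, but each individual cone must have a \emph{small} index category. This is not cosmetic, since \cref{main} must produce $\C$-valued models of $\T^{\op}$ for a merely (small-)cocomplete $\C$, where a large distinguished cocone would have no reason to admit a colimit. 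Worse, even ignoring size, ``every subnet $\nu\circ\phi$ admits a further subnet converging to $x$'' is a $\forall\exists$ condition: its extension is a \emph{union}, over choice functions $\phi\mapsto\psi_\phi$, of intersections, and a union is a colimit, not a limit, so no single cone with vertex $C_D$ can express it. The paper resolves both problems at once (\cref{netchar}, \cref{topob}(3), \cref{topsketch}(3)): it uses the strengthened locality axiom in which one quantifies only over cofinal \emph{subsets} $Q\subseteq P$ (a set, not a proper class of index categories), and it introduces one \emph{small} distinguished cone for \emph{each} choice of a family of cofinal maps $(h_Q:R_Q\to Q)_Q$, so the existential quantifier is absorbed into the (large) class of cones rather than into any single cone. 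Without some such device your sketch either fails to be a limit sketch in the required sense or fails to impose locality.

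A second, more repairable issue: what you describe (generating objects, generating arrows, intended relations) is a graph-sketch, not a category-sketch. As \cref{compfree} warns, passing to the path category modulo relations forces you to describe all composites (many more arrows appear than the generators) and need not yield a locally small category. The paper sidesteps this by taking $\E$ to be a full subcategory of $\Top$ itself --- the discrete spaces, the spaces $P\cup\{\infty\}$ of \cref{Pinfty}, and the colimits $F(P,h)$, $G(P,Q)$ --- so that local smallness, realizedness and all composites come for free, and the verification that models are topological space objects reduces to the observation that continuous maps out of $P\cup\{\infty\}$ are exactly convergent $P$-indexed nets. You should also check that restricting your reindexing arrows to \emph{monotone} cofinal maps does not change the class of convergence relations singled out; the paper works with Kelley's not-necessarily-monotone cofinal maps throughout.
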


That there is \emph{some} realized limit sketch for $\Top$ is already proven quickly in \cref{sec:compact} using the theory of strongly compact categories. The whole point of \cref{kelley} is that we can find a specific, not so large, and hence useful limit sketch for topological spaces. It allows us to think of topological spaces as infinitary essentially algebraic objects. Our limit sketch is very similar to Edgar's characterization of topological spaces \cite{edgar} in terms of net convergence. One of the main ingredients is the observation is that a convergent net indexed by a directed set $P$ is the same as a continuous map on a suitably defined topological space $P \cup \{\infty\}$.

Similar, but more complex descriptions of $\Top$ have been obtained before. Burroni \cite{burroni1,burroni2} used filter convergence to define a mixed sketch (cones and cocones) that models $\Top$. Barr \cite{barr} described $\Top$ as the category of relational $\beta$-modules for the ultrafilter monad $\beta$ on $\Rel$.
 
Although \cref{kelley} has already been known to some degree (see \cite[Proposition 25]{guitart}), it seems that the limit sketch for topological spaces has never been written down explicitly and used before, in particular by means of defining the category $\Top(\C)$ of \emph{net-based topological space objects} internal to any complete category $\C$. We study categorical properties of this construction in \cref{sec:topspace}.

\begin{propintro}[\cref{topofunk}]
When $\C$ is wellpowered and complete, then the forgetful functor
\[\Top(\C) \to \C\]
is topological. Hence, if $\C$ is also cocomplete, $\Top(\C)$ is cocomplete as well.
\end{propintro}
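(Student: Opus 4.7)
The plan is to verify the defining property of a topological functor directly: every (possibly large) source $(f_i \colon X \to U(Y_i))_{i \in I}$ in $\C$ should admit a unique $U$-initial lift. The cocompleteness assertion for $\Top(\C)$ will then follow from the classical fact that every topological functor creates colimits from its base. Faithfulness and amnesticity of $U$ are already built into the sketch-theoretic description: a morphism of $\T$-models is a natural transformation whose components at the auxiliary objects of $\T$ are forced by the component at the distinguished ``underlying'' object $\mathrm{pt}$, since every other object of $\T$ sits at the apex of a distinguished limit cone whose legs factor through powers of $\mathrm{pt}$.

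To construct the initial lift I would proceed object by object through the auxiliary data of the sketch from \cref{kelley}. For each directed set $P$ the sketch carries an object $\mathrm{Conv}_P$ which, in any $\T$-model $Y$, is exhibited as a subobject
\[Y(\mathrm{Conv}_P) \hookrightarrow Y(\mathrm{pt})^{P \cup \{\infty\}}\]
by the distinguished cone at $\mathrm{Conv}_P$ (the evaluation-at-each-index morphisms). Given the source $(f_i)_{i \in I}$, I would define
\[\tilde{X}(\mathrm{Conv}_P) \hookrightarrow X^{P \cup \{\infty\}}\]
as the intersection, inside the power $X^{P \cup \{\infty\}}$, of the pullbacks of $Y_i(\mathrm{Conv}_P)$ along the maps induced by the $f_i$. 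This intersection exists because $\C$ is complete and wellpowered. The remaining structural morphisms of $\tilde X$ are induced uniquely from those of the $Y_i$: any sketch morphism whose target sits inside a power of $\mathrm{pt}$ (for instance the constant-net morphism $\mathrm{pt} \to \mathrm{Conv}_P$) lifts to $\tilde X(\mathrm{Conv}_P)$ because post-composition with each $f_i$ makes it factor through $Y_i(\mathrm{Conv}_P)$ by naturality in $Y_i$. The main technical verification is that the distinguished cones of $\T$ remain limit cones in $\tilde X$, which reduces to the commutation of subobject intersections with the ambient limits in $\C$.

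The universal property of the lift is then clean: for any $\T$-model $Z$ and any $\C$-morphism $g \colon U(Z) \to X$ such that each $f_i g$ underlies a morphism $Z \to Y_i$ in $\Top(\C)$, the induced map $U(Z)^{P \cup \{\infty\}} \to X^{P \cup \{\infty\}}$ carries $Z(\mathrm{Conv}_P)$ into each pulled-back subobject and hence into their intersection $\tilde X(\mathrm{Conv}_P)$, which yields the required lift; uniqueness is immediate from faithfulness. The principal obstacle is size management, since the auxiliary objects of $\T$ are indexed by the proper class of all directed sets and the source itself may also be proper-class sized—wellpoweredness of $\C$ is therefore essential at every step to keep the subobject intersections inside $\C$. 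The cocompleteness conclusion then follows from the general theory of topological functors.
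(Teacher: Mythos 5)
Your construction is exactly the one the paper uses in \cref{topofunk}: the initial lift is obtained by intersecting, inside $X^P \times X$ (equivalently $X^{P\cup\{\infty\}}$), the pullbacks of the convergence subobjects $C_i(P,S_i)$ along the maps induced by the structured source, with wellpoweredness guaranteeing the (possibly large) intersection exists, the topology axioms following formally from those of the $S_i$, and cocompleteness then coming from the general theory of topological functors. The only cosmetic difference is that you phrase it through the sketch $\T$ while the paper works directly with the internal axioms of \cref{topob}, but these are identified by \cref{topsketchthm}, so the argument is essentially the same.
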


By a formal dualization we also get the notion of a (net-based) \emph{cotopological space object} internal to a cocomplete category $\C$, and these form a category $\CoTop(\C)$.

\cref{mainintro,kelley} imply the following universal property of $\Top$ as a cocomplete category.
 
\begin{thmintro}[\cref{topue}] \label{topueintro}
For a cocomplete category $\C$ we have
\[\Hom_{\c}(\Top,\C) \simeq \CoTop(\C).\]
Hence, $\Top$ is the universal example of a cocomplete category with a cotopological space object.
\end{thmintro}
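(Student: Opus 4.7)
The plan is to obtain this theorem as an immediate combination of \cref{mainintro} and \cref{kelley}. First, \cref{kelley} supplies a realized limit sketch $\T$ with $\Top \simeq \Mod(\T)$ for which every $\T$-model is small, so the hypotheses of \cref{mainintro} apply verbatim with $\S \coloneqq \T$. Invoking \cref{mainintro} then gives, for any cocomplete category $\C$, the natural equivalence
\[
\Hom_{\c}(\Top,\C) \simeq \Hom_{\c}(\Mod(\T),\C) \simeq \Mod_{\C}(\T^{\op}).
\]

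Second, I would identify $\Mod_{\C}(\T^{\op})$ with $\CoTop(\C)$. By the definition via formal dualization introduced just before the statement, $\CoTop(\C)$ consists of $\C$-valued models of the colimit sketch $\T^{\op}$: that is, functors $\T^{\op} \to \C$ sending each dualized distinguished cone of $\T$ to a colimit cocone in $\C$. Since this is literally the definition of a $\C$-valued $\T^{\op}$-model, the equivalence $\Mod_{\C}(\T^{\op}) \simeq \CoTop(\C)$ holds tautologically, and composing it with the equivalence of the first step produces the claimed $\Hom_{\c}(\Top,\C) \simeq \CoTop(\C)$.

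Finally, the universal-property assertion follows from the second sentence of \cref{mainintro} specialized to $\S = \T$: under the equivalence just constructed, the identity functor on $\Top$ corresponds to a canonical Yoneda cotopological space object in $\Top$, and an arbitrary cocontinuous $F \colon \Top \to \C$ is freely determined by its value on this universal object. The hard part, such as it is, is exclusively the bookkeeping in the second step, i.e.\ checking that our definition of $\CoTop(\C)$ and the category $\Mod_{\C}(\T^{\op})$ really name the same thing; this has been built into the definition of $\CoTop(\C)$ via dualization and so presents no genuine obstacle. All the topological content is hidden in the construction of $\T$ in \cref{sec:topsketch}, and all the categorical content is in \cref{mainintro}; the present theorem is merely the place where the two streams meet.
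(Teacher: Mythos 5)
Your overall route is exactly the paper's: combine the universal property of $\Mod(\S)$ with the sketch $\T$ for $\Top$, obtaining $\Hom_{\c}(\Top,\C) \simeq \Mod_{\C}(\T^{\op})$, and then identify the right-hand side with $\CoTop(\C)$. The first step is fine. The problem is your second step, where you declare the identification $\Mod_{\C}(\T^{\op}) \simeq \CoTop(\C)$ to be tautological, ``built into the definition of $\CoTop(\C)$ via dualization.'' It is not. In this paper $\CoTop(\C)$ is defined as $\Top(\C^{\op})^{\op}$, where $\Top(-)$ is the category of net-based topological space objects given by the explicit internal axioms of \cref{topob} (monomorphisms $C(P,X) \hookrightarrow X^P \times X$ satisfying the four convergence axioms); it is \emph{not} defined as the category of models of the colimit sketch $\T^{\op}$. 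The statement that these two descriptions agree for an arbitrary complete category is precisely \cref{topsketchthm}, a theorem with a nontrivial proof: one has to match the abstract cones of the sketch built from the spaces $P \cup \{\infty\}$, $F(P,h)$ and $G(P,Q)$ against the concrete axioms of \cref{topob}, and construct the powers $X^T$ for all $T$ in the sketch. Note also that \cref{kelley} as stated only gives the $\Set$-valued equivalence $\Top \simeq \Mod(\T)$; what you need here is the $\C^{\op}$-valued version $\Mod_{\C^{\op}}(\T) \simeq \Top(\C^{\op})$, which does not follow formally from the $\Set$-valued case without an argument (e.g.\ via \cref{modyon} and representables, which is essentially what the proof of \cref{topsketchthm} organizes).

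So the correct chain, as in the paper, is
\[\Hom_{\c}(\Top,\C) \simeq \Hom_{\c}(\Mod(\T),\C) \simeq \Mod_{\C}(\T^{\op}) \cong \Mod_{\C^{\op}}(\T)^{\op} \simeq \Top(\C^{\op})^{\op} = \CoTop(\C),\]
where the middle equivalence is \cref{main} (using \cref{topsmall}) and the penultimate one is \cref{topsketchthm} applied to the complete category $\C^{\op}$. Your proposal is not wrong in architecture, but it mislocates the remaining work: the ``bookkeeping'' you dismiss is exactly the content of \cref{topsketchthm}, and without citing or reproving it the argument has a genuine gap. The concluding remark about the universal object is fine once this is repaired; the universal cotopological space object is $1 \in \Top$ with the cotopology $(P \sqcup 1) \otimes 1 \twoheadrightarrow P \cup \{\infty\}$.
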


Here, a cotopological space object $X$ is mapped to a suitably defined tensor product functor $X \otimes_{\T} (-) : \Top \to \C$ that merges the cotopology with the topology. From \cref{topueintro} and a classification of cotopologies in $\Set$ and $\Top$ we then deduce the following equivalences, which were proven before by Isbell, see below.

\begin{thmintro}[\cref{cocontop,cocontop2}] \label{topmain}
There are equivalences of categories
\begin{align*}
\Hom_{\c}(\Top,\Set) & \simeq \Set, \\
\Hom_{\c}(\Top,\Top) & \simeq \{\text{topological topologies}\}.
\end{align*}
\end{thmintro}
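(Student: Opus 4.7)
The plan is to combine \cref{topueintro} with explicit classifications of $\CoTop(\Set)$ and $\CoTop(\Top)$. By \cref{topueintro} we have natural equivalences $\Hom_{\c}(\Top,\Set) \simeq \CoTop(\Set)$ and $\Hom_{\c}(\Top,\Top) \simeq \CoTop(\Top)$, so the problem reduces to unpacking what a net-based cotopological space object inside $\Set$ and inside $\Top$ actually is, using the explicit presentation of the sketch $\T$ from \cref{kelley}, and matching this data with the claimed right-hand sides.

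For the equivalence $\CoTop(\Set) \simeq \Set$, I would argue as follows. A net-based cotopological object in $\Set$ is a $\T^{\op}$-model in $\Set$, that is, a functor $\T^{\op} \to \Set$ sending the cocones of $\T^{\op}$ (the formal duals of the cones of $\T$) to colimit cocones in $\Set$. These co-operations dual to net convergence are highly restrictive, in line with the familiar phenomenon that coalgebraic structure in $\Set$ tends to be trivial (cogroups in $\Set$ are trivial, and similarly for many other algebraic theories). I would check directly that for every set $S$ there is a unique such cotopological structure, so that the forgetful functor $\CoTop(\Set) \to \Set$ is an equivalence. This amounts to a routine verification once $\T$ is written down, since coproducts and coequalizers in $\Set$ force the dualized convergence data to collapse.

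For the equivalence $\CoTop(\Top) \simeq \{\text{topological topologies}\}$, I would unpack a $\T^{\op}$-model in $\Top$ as a topological space $Y$ equipped with continuous co-operations dual to the net-convergence operations of $\T$, and verify by inspection that such a system of continuous co-operations is exactly the data of a topology on a set together with a compatible topology on the set of its topologies (or open sets) satisfying the coherence constraints of Isbell, i.e.\ a topological topology. Once the dictionary is set up, matching morphisms on both sides is formal, and the equivalence follows.

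I expect the main obstacle to lie in the second step: the sketch $\T$ has many objects and cones (points, directed sets indexing nets, neighbourhood filters, limit arrows), and one must check carefully that dualizing each of them and reinterpreting the result in $\Top$ reproduces on the nose Isbell's axioms for a topological topology, without any hidden mismatch in universes or in the handling of large directed sets. The first step, although technical, should go through by the general collapse principle for cogebra in $\Set$, so it is more of a routine unpacking than a creative argument.
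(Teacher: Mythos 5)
Your overall decomposition is exactly the paper's: both equivalences follow from $\Hom_{\c}(\Top,\C) \simeq \CoTop(\C)$ (\cref{topue}) once one has classified $\CoTop(\Set)$ and $\CoTop(\Top)$, which the paper does in \cref{cotopset} and \cref{topychar}. The gap is that both classification steps are asserted rather than proved, and the reasons you give for expecting them to be routine are not the reasons that make them work. For $\CoTop(\Set)$ there is no ``general collapse principle for cogebra in $\Set$'' to invoke: a cotopology on a set $X$ is equivalent to a \emph{continuous} lift of $\Hom(X,-) : \Set \to \Set$ through $\Top \to \Set$ (\cref{coobservations}), and the actual argument in \cref{cotopset} is that if, in the case $X=1$, some set acquired a nonempty proper open subset, then every set would become discrete, yet the discrete topology functor does not preserve infinite products and so is not continuous; hence the lift must be the trivial topology functor. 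The reduction from general $X$ to $X=1$ is also not free: the paper passes to the coslice category via \cref{topslice} and the equivalence $\Set/X \simeq \Set^X$. Neither step is a formal consequence of how coproducts and coequalizers behave in $\Set$.

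For $\CoTop(\Top)$, ``verifying by inspection'' hides all of the content of \cref{topychar}. In one direction you need the Sierpinski space $\IS$ to turn the topology on $\Hom(X,\IS) \cong \O(X)$ into a topology on the open-set lattice, and to see that arbitrary unions and finite intersections are continuous because they are represented by continuous maps $\IS^I \to \IS$. In the other direction you must actually \emph{construct} the spaces $C(P,X)$ --- the paper takes the set $(P \sqcup \{\infty\}) \times |X|$ with opens $U$ such that $(\iota_p^{-1}(U))_{p \in P} \to \iota_{\infty}^{-1}(U)$ in $(\O(X),\tau)$ --- and then verify the four cotopology axioms, using \cref{cotopset} to know that only the topologies, not the underlying sets, are at stake. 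Note also that the relevant datum is a topology on the set $\O(X)$ of open subsets, not ``on the set of its topologies.'' So the plan is the right one and matches the paper's route, but as written it defers precisely the two nontrivial inputs that constitute the proof.
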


Here, a \emph{topological topology} (as introduced by Isbell \cite{isbell3}) is a topological space equipped with a topology on its set of open subsets such that arbitrary unions and finite intersections become continuous operations on that set of open subsets.

Finally, in order to study \emph{continuous} functors on $\Top$, in \cref{sec:topop} we give a limit sketch that models the dual category $\Top^{\op}$. This is based on its description as an infinitary quasi-variety by Adámek, Barr and Pedicchio \cite{adamekpedicchio,barrpedicchio}. It also produces the category $\smash{\Topf(\C)}$ of \emph{frame-based topological space objects} internal to any complete category $\C$. These can be basically seen as internal locales \emph{with} points, keeping in mind that locales represent spaces \emph{without} points.

\cref{mainintro} yields the following result, where $\Hom^{\c}$ denotes the category of continuous functors.

\begin{thmintro}[\cref{contop}] \label{topaintro}
For a complete category $\C$ we have
\[\Hom^{\c}(\Top,\C) \simeq \Topf(\C)^{\op}.\]
Hence, up to this dualization, $\Top$ is the universal complete category with an internal frame-based topological space object.
\end{thmintro}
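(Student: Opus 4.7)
The plan is to reduce \cref{topaintro} to \cref{mainintro} by passing to opposite categories. I assume that \cref{sec:topop} constructs a limit sketch $\S$ together with an equivalence $\Mod(\S) \simeq \Top^{\op}$, verifies that $\S$ is realized and that every $\S$-model is small, and defines $\Topa(\C) \coloneqq \Mod_\C(\S)$ for any complete category $\C$.

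The first step is the formal observation that a functor $\Top \to \C$ is continuous if and only if its opposite $\Top^{\op} \to \C^{\op}$ is cocontinuous, and natural transformations match up to reversal: since limits in $\Top$ become colimits in $\Top^{\op}$, and similarly limits in $\C$ become colimits in $\C^{\op}$,
\[\Hom^\c(\Top, \C) \simeq \Hom_\c(\Top^{\op}, \C^{\op}) \simeq \Hom_\c(\Mod(\S), \C^{\op}).\]
Since $\C$ is complete, $\C^{\op}$ is cocomplete, so \cref{mainintro} applies to the sketch $\S$ and gives
\[\Hom_\c(\Mod(\S), \C^{\op}) \simeq \Mod_{\C^{\op}}(\S^{\op}).\]

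The last step is the bookkeeping identification $\Mod_{\C^{\op}}(\S^{\op}) \simeq \Topa(\C)^{\op}$. Unwinding the definitions, a functor $M : \S^{\op} \to \C^{\op}$ that sends the distinguished cocones of $\S^{\op}$ to colimit cocones is the same datum as the dual functor $M^{\op} : \S \to \C$ that sends the distinguished cones of $\S$ to limit cones in $\C$, i.e.\ a $\C$-valued $\S$-model, hence an object of $\Topa(\C)$; on morphisms the dualization reverses arrows, producing the \emph{op}. Chaining the three equivalences yields the stated one. The universal-property part of the statement I would obtain by tracing the Yoneda model provided by \cref{mainintro} through this dualization: it descends to a canonical frame-based topological space object in $\Top$ (the Sierpiński-type object encoding the points and open sets of a space), which then satisfies the claimed universal property.

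The main obstacle does not lie in this deduction, which is pure diagram chase, but in \cref{sec:topop}: one must extract from the Adámek--Barr--Pedicchio description of $\Top^{\op}$ as an infinitary quasi-variety an explicit realized limit sketch $\S$ and show that each of its models is small (presumably by bounding the cardinality of the underlying set of a model in terms of the sketch data, as in the analogous argument for \cref{kelley}). Once those hypotheses are granted, \cref{topaintro} is a formal corollary of \cref{mainintro}.
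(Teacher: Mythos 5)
Your proposal is correct and follows the paper's own proof of \cref{contop} essentially verbatim: the sketch $\S$ from \cref{framesketch} models $\Top^{\op}$ (\cref{topopsketch}), every model is small (\cref{smallagain}), and the equivalence then follows from \cref{main} by dualization, exactly as you outline. The only wrinkle is op-bookkeeping: the paper has $\Hom^{\c}(\Top,\C) \cong \Hom_{\c}(\Top^{\op},\C^{\op})^{\op}$ (natural transformations reverse direction under dualization) and defines $\Topa(\C) \simeq \Mod_{\C}(\S)^{\op}$ rather than $\Mod_{\C}(\S)$ (morphisms of frame-based space objects are taken contravariantly), so the two op's you drop cancel and your final formula agrees with the paper's.
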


From this we will obtain yet another, even simpler proof of \cref{topmain}.

\textbf{Relation to Isbell's work.} While writing this paper, it became more and more clear that many of our results are already either explicit or implicit in Isbell's early work. Both equivalences in \cref{topmain} have already been proven in \cite[Theorems 1.3 and 1.4]{isbell3}, although with different methods. Isbell uses clever topological arguments and does not (explicitly) use the theory of limit sketches. We believe that our proof is more conceptual and easy to understand, in particular the one via \cref{topaintro}. In the introduction of \emph{loc.cit.}\ Isbell sketches several special cases of \cref{mainintro} and refers to his \cite[Theorem 3.8]{isbell2} for a general explanation, but that theorem seems to cover only the fully faithfulness part of \cref{mainintro} in a similar situation. However, \cite[Theorem 3.4]{isbell2} is actually a generalization of \cref{mainintro}. Also, \cref{compchar} is very similar to \cite[Theorem 2.7]{isbell2}. There are at least three technical differences: Isbell does not restrict to locally small categories, uses a theory notion that is more general than limit sketches (actually, three of them: prefacts, midfacts, facts), and he wants the category of models to have a forgetful functor to $\Set$. But most notably it is a challenge to even understand all the new terminology in Isbell's paper on functorial semantics. Even the translation from \cite[Theorem 3.4]{isbell2} to \cref{mainintro} requires some work. We therefore cannot claim any originality here, but hope to make Isbell's results more accessible. Moreover, we will explain several examples of them.
  
\textbf{Acknowledgments.} I would like to thank Ivan Di Liberti for providing many useful comments on the paper. I also would like to thank Zhen Lin Low, Tim Campion, Nathanael Arkor, Dmitri Pavlov, Eric Wofsey, Simon Henry, Nelson Martins-Ferreira and Chris Custer for helpful comments on the subject.

\textbf{Conventions.} Categories are assumed to be locally small unless otherwise stated, since we need to make this assumption for many constructions. We also assume (co)cones and (co)limits to be small unless otherwise stated, (co)completeness means the existence of small (co)limits, and likewise (co)continuity of a functor means that small (co)limits are preserved. Terminal objects are denoted by $1$, initial objects by $0$. We will use the language of generalized elements \cite[Section 2.3]{awodey}. We denote by $\Hom_{\c}$ categories of cocontinuous functors, and by $\Hom^{\c}$ categories of continuous functors (both of which are not necessarily locally small).


\tableofcontents


\section{Limit sketches and their models}\label{sec:limit}

This section is expository and recalls well-known definitions. Some texts focus on small sketches only and therefore just call them sketches \cite{adamekrosicky,barrwells,lair}. However, since general (possibly large) sketches are our focus, we will explicitly say when a sketch is small.
 
\begin{defi} \label{sketchdef}
Limit sketches and their models are defined as follows.
\begin{itemize}
\item A \emph{limit sketch} (or \emph{projective sketch}) $(\E,\S)$ consists of a category $\E$ and a class $\S$ of cones in $\E$. Sometimes we just write $\S$ for the limit sketch. The cones in $\S$ are called \emph{distinguished cones} of the sketch. We write cones simply as $\{A \to A_i\}_{i \in I}$, where $I$ is a small index category. We say that $(\E,\S)$ is \emph{small} if $\E$ is small and $\S$ is a set.
\item A \emph{model} of a limit sketch $(\E,\S)$ is a functor $M : \E \to \Set$ such that for every distinguished cone $\{A \to A_i\}_{i \in I}$ the induced cone $\{M(A) \to M(A_i)\}_{i \in I}$ is a limit cone, i.e.\ a universal cone, in the category $\Set$. Thus, models are required to map distinguished cones to limit cones.
\item More generally, if $\C$ is any category, a \emph{$\C$-valued model} of $(\E,\S)$ is a functor $M : \E \to \C$ that maps the distinguished cones in $\S$ to limit cones in $\C$. Here we usually assume that $\C$ is complete, or at least that limits indexed by the index categories of the cones in $\S$ exist. A morphism of models is defined as a morphism of the underlying functors. We obtain a (not necessarily locally small) category
\[\Mod_{\C}(\S) \subseteq \Hom(\E,\C)\]
of $\C$-valued models of $\C$, which we abbreviate by $\Mod(\S)$ in the standard case $\C=\Set$. When a category $\A$ is equivalent to $\Mod(\S)$, we say that $\S$ \emph{models} $\A$.
\item A morphism of limit sketches is a functor of the underlying categories that preserves distinguished cones. We get a (not necessarily locally small) category of limit sketches. Notice that a $\C$-valued model of a limit sketch $(\E,\S)$ is just a morphism of limit sketches $(\E,\S) \to (\C,\{\text{limit cones in } \C\})$.
\item A limit sketch $(\E,\S)$ is called \emph{realized} when for every object $A \in \E$ the representable functor $\Hom(A,-) : \E \to \Set$ is a model of $\S$, which we therefore call a \emph{representable model}. This means that all distinguished cones are limit cones in $\E$.
\item Dualization yields the notion of a \emph{colimit sketch} (or \emph{inductive sketch}). It consists of a category and a class of cocones in it, and a model of it is a functor that maps the distinguished cocones to colimit cocones. If $(\E,\S)$ is a limit sketch, then $(\E^{\op},\S^{\op})$ is a colimit sketch with the obvious definition of $\S^{\op}$, and for every category $\C$ we have
\[\Mod_{\C}(\S)^{\op} \cong \Mod_{\C^{\op}}(\S^{\op}).\]
\item A \emph{mixed sketch} (or just \emph{sketch}) consists of a category and classes of cones and cocones in it. We will not work with mixed sketches.
\end{itemize}
\end{defi}

\begin{rem}\label{compfree}
Some authors prefer a graph-based definition of a sketch, see for example \cite[Chapter 4]{barrwells} or \cite[Chapter 3]{makkai}, where our sketches are called category-sketches. In a graph-sketch the cones live in a (possibly large) graph and are indexed by graphs as well. The graph also has pairs of paths to model commutativity relations. The advantage of this approach is that it is straight forward to describe mathematical objects as models of a graph-sketch, and the word \emph{sketch} fits very well. We can always replace a graph-sketch by a category-sketch by taking the path category modulo the commutativity relations, and the models are the same. However, it is not so clear when this category is locally small (which we certainly want). For example, it is straight forward to give a locally small graph-sketch for complete boolean algebras, but the induced category-sketch is much harder to describe and in fact not locally small by \cite[Theorem 4]{hales}. Another advantage of category-sketches is the beauty and simplicity of the language of functors, and for realized limit category-sketches representable models are immediately available and very useful. Let us mention that Ehresmann's notion of a neocategory (multiplicative graph) unifies both notions of sketches \cite{ehresmann3}.
\end{rem}

\begin{rem} \label{internalize}
As soon as the objects of a category can be identified with models of a limit sketch $\S$, we can internalize their definition into arbitrary categories $\C$, namely as $\C$-valued models of $\S$. For instance, the limit sketch of a group \cite[Chapter 4, Section 1.3]{barrwells} yields the usual notion of a group object internal to a category.
\end{rem}

\begin{rem} \label{real}
Every \emph{small} limit sketch can be replaced by a small realized limit sketch with the same models \cite[Section I.3]{ehresmann3}. But this does not necessarily work with general (locally small) limit sketches.
\end{rem}

\begin{rem} \label{modyon}
If $(\E,\S)$ is a limit sketch, a functor $M : \E \to \C$ is a $\C$-valued model of $\S$ if and only if for every $T \in \C$ the functor $\Hom(T,M(-)) : \E \to \Set$ is a $\Set$-valued model of $\S$. Accordingly, for a colimit sketch $(\E,\S)$ a functor $M : \E \to \C$ is a $\C$-valued model if and only if for every $T \in \C$ the functor $\Hom(M(-),T) : \E^{\op} \to \Set$ is a $\Set$-valued model of $\S^{\op}$.
\end{rem}

\begin{lemma} \label{limitsofmodels}
Let $(\E,\S)$ be a (co)limit sketch. Let $\C$ be a (co)complete category. Then $\Mod_\C(\S)$ is (co)complete. In fact, (co)limits are computed pointwise, so that for each $A \in \E$ the evaluation functor $\Mod_\C(\S) \to \C$, $M \mapsto M(A)$ is (co)continuous.
\end{lemma}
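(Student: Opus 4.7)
By duality it suffices to treat the limit sketch case, and the colimit sketch case follows by applying the limit case to $(\E^{\op},\S^{\op})$ and $\C^{\op}$ together with the identification $\Mod_{\C}(\S)^{\op} \cong \Mod_{\C^{\op}}(\S^{\op})$ from \cref{sketchdef}. So assume $(\E,\S)$ is a limit sketch and $\C$ is complete. The plan is to show that the full subcategory inclusion $\Mod_{\C}(\S) \hookrightarrow \Hom(\E,\C)$ is closed under limits, using that the functor category $\Hom(\E,\C)$ is complete with pointwise limits.

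First, given a diagram $D : J \to \Mod_{\C}(\S)$, form the pointwise limit $M \in \Hom(\E,\C)$, so that $M(A) = \lim_{j \in J} D(j)(A)$ with the obvious functorial structure; the limit cone is also pointwise. The key step is to verify that $M$ is in fact an $\S$-model. Let $\{A \to A_i\}_{i \in I}$ be a distinguished cone in $\S$. We must show that $\{M(A) \to M(A_i)\}_{i \in I}$ is a limit cone in $\C$. Consider the bifunctor
\[ F : J \times I \to \C, \qquad (j,i) \mapsto D(j)(A_i). \]
On the one hand, since each $D(j)$ is an $\S$-model, $\lim_{i} F(j,i) = \lim_i D(j)(A_i) = D(j)(A)$, so $\lim_j \lim_i F(j,i) = \lim_j D(j)(A) = M(A)$. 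On the other hand, $\lim_j F(j,i) = M(A_i)$, so $\lim_i \lim_j F(j,i) = \lim_i M(A_i)$. The Fubini-type theorem asserting that limits commute with limits then identifies these two iterated limits, and a straightforward check of the transition maps shows that the resulting isomorphism realizes exactly the cone $\{M(A) \to M(A_i)\}_{i \in I}$ as a limit cone. Hence $M \in \Mod_{\C}(\S)$.

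Since $\Mod_{\C}(\S) \subseteq \Hom(\E,\C)$ is a full subcategory, the limit cone of $D$ in $\Hom(\E,\C)$ lies in $\Mod_{\C}(\S)$ and is automatically a limit cone there. This simultaneously shows that $\Mod_{\C}(\S)$ is complete and that limits are computed pointwise, which is precisely the statement that each evaluation functor $\ev_A : \Mod_{\C}(\S) \to \C$, $M \mapsto M(A)$ is continuous.

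I do not expect any serious obstacle; the only subtle point is bookkeeping for the Fubini interchange, to make sure the canonical comparison is really the cone induced by the distinguished cone in $\S$. This is a routine diagram chase using the universal properties of the two iterated limits and the naturality of the maps in $D$.
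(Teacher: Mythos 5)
Your proposal is correct and is essentially the paper's own argument, spelled out in more detail: the paper's proof likewise observes that $\Mod_{\C}(\S)$ is closed under pointwise (co)limits in the functor category because (co)limits commute with (co)limits. The extra bookkeeping you provide for the Fubini interchange is fine but not a different route.
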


\begin{proof}
Since (co)limits commute with (co)limits, $\Mod_{\C}(\S)$ is clearly closed under (co)limits in the (usually not locally small) category $\Hom(\E,\Set)$, which is (co)complete with pointwise (co)limits.
\end{proof}

\begin{cor} \label{nec}
A necessary condition for a category $\A$ to be modeled by a (co)limit sketch is the following: $\A$ is (co)complete, and the (co)continuous functors $\A \to \Set$ are jointly conservative (i.e.\ a morphism in $\A$ is an isomorphism when its image through any (co)continuous functor $\A \to \Set$ is an isomorphism).
\end{cor}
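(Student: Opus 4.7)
The plan is to fix an equivalence $\A \simeq \Mod(\S)$ for a (co)limit sketch $(\E,\S)$ and transport both claims through it, so it suffices to verify the two properties for $\Mod(\S)$. Both are in essence immediate consequences of \cref{limitsofmodels}, which packages all the real content; the corollary is just a matter of extracting the right pieces.

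For (co)completeness I would simply apply \cref{limitsofmodels} with the target category $\C = \Set$: since $\Set$ is both complete and cocomplete, $\Mod(\S)$ is (co)complete (and in the colimit-sketch case the same lemma still delivers cocompleteness since $\Set$ is cocomplete). This transfers to $\A$ along the equivalence.

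For joint conservativity, the idea is to exhibit one explicit jointly conservative family of (co)continuous functors. For each $A \in \E$, \cref{limitsofmodels} furnishes the evaluation functor
\[\ev_A : \Mod(\S) \to \Set, \qquad M \mapsto M(A),\]
which is (co)continuous. These functors are jointly conservative for the formal reason that a morphism of models is in particular a natural transformation, and a natural transformation is an isomorphism iff each of its components $f_A : M(A) \to N(A)$ is an isomorphism in $\Set$. Composing with the equivalence $\A \simeq \Mod(\S)$ then yields a jointly conservative family of (co)continuous functors $\A \to \Set$, which a fortiori means the class of all (co)continuous functors $\A \to \Set$ is jointly conservative.

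There is no real obstacle here; the mild thing to keep straight is just that in the colimit-sketch case one still wants \emph{cocontinuous} functors to $\Set$ to be jointly conservative, and the evaluations $\ev_A$ remain cocontinuous by the dual half of \cref{limitsofmodels}. So the proof is essentially a one-liner invoking that lemma together with the pointwise criterion for natural isomorphisms.
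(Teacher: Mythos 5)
Your proposal is correct and is exactly the argument the paper intends: the corollary is stated without a separate proof precisely because it follows from \cref{limitsofmodels} in the way you describe, namely (co)completeness of $\Mod(\S)$ with $\C=\Set$ plus the fact that the pointwise-(co)continuous evaluation functors $\ev_A$ detect isomorphisms since a natural transformation is invertible iff all its components are. No gaps.
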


\begin{rem} \label{modfunk}
If $(\E,\S)$ is a (co)limit sketch, $M : \E \to \C$ is a $\C$-valued model of $\S$, and $H : \C \to \D$ is a (co)continuous functor, it is clear that $H \circ M$ is a $\D$-valued model of $\S$. Thus, $H$ induces a (co)continuous functor
\[H_* : \Mod_{\C}(\S) \to \Mod_{\D}(\S).\]
\end{rem}

\begin{defi} \label{tensorsketch}
If $(\E,\S)$ and $(\F,\T)$ are limit sketches, we define their tensor product by $(\E \times \F, \S \otimes \T)$, where $\S \otimes \T$ consists of those cones of the form $(A,\tau)$ or $(\sigma,B)$, where $A \in \E$, $B \in \F$, $\sigma \in \S$ and $\tau \in \T$. This extends to a symmetric monoidal structure on limit sketches. If $\C$ is a complete category, the natural equivalence $\Hom(\E \times \F,\C) \simeq \Hom(\E,\Hom(\F,\C))$ clearly restricts to an equivalence
\[\Mod_{\C}(\S \otimes \T) \simeq \Mod_{\Mod_{\C}(\T)}(\S).\]
\end{defi}

\begin{rem} \label{smallsketch}
If $\S$ is a small limit sketch, then clearly $\Mod(\S)$ is locally small, and by \cite[Theorem 4.2.1]{barrwells} or \cite[Theorem 1.39]{adamekrosicky} it is a reflective subcategory of $\Hom(\E,\Set)$. In particular, $\Mod(\S)$ is cocomplete. The reflector is based on a transfinite construction that may fail to work for large limit sketches. The construction in \cite{velebil} is not a solution to this problem since it just interprets the sketch as a small sketch with respect to a larger Grothendieck universe. So for large limit sketches $\S$ it may happen that $\Mod(\S)$ is not cocomplete, even when $\S$ is realized. In fact, an example for this failure has been given by Simon Henry at \url{https://mathoverflow.net/questions/393511} using ideas from algebraic set theory.
\end{rem}

Next we find a condition when $\Mod_{\C}(\S)$ is locally small.

\begin{defi} \label{closure}
Let $(\E,\S)$ be a limit sketch. We define the \emph{closure} $\overline{\S}$ as the class of cones in $\E$ that are mapped to limit cones in $\Set$ by any model of $\S$, and hence to limit cones in $\C$ by any $\C$-valued model by \cref{modyon}. Clearly, we have $\S \subseteq \overline{\S}$ and
\[\Mod_{\C}(\S) = \Mod_{\C}(\overline{\S}).\]
In practice, every cone that is recursively built from cones in $\S$ belongs to $\overline{\S}$.
\end{defi}

\begin{defi} \label{monocone}
Notice that a morphism $f : X \to Y$ in a category is a monomorphism if and only if 
\[\begin{tikzcd}
X \ar{r}{\id_X} \ar{d}[swap]{\id_X} & X \ar{d}{f} \\ X \ar{r}[swap]{f} & Y
\end{tikzcd}\]
is a pullback. More generally, a family $\{f_i: X \to Y_i : i \in I\}$ is jointly monomorphic if and only if the cone
\[\begin{tikzcd}
X \ar{r}{\id_X} \ar{d}[swap]{\id_X} & X \ar{d}{f_i} \\ X \ar{r}[swap]{f_i} & Y_i
\end{tikzcd}\]
whose legs vary over all $i \in I$ is a limit cone. We therefore call this the \emph{monomorphism cone} of the family $\{f_i : X \to Y_i : i \in I\}$.
\end{defi}

\begin{lemma} \label{locallysmall}
Let $(\E,\S)$ be a limit sketch and $\C$ be a category. Assume that $\E$ has a small full subcategory $\E' \subseteq \E$ such that for every $A \in \E$ the monomorphism cone of the family $\{f : A \to B : B \in \E', f : A \to B\}$ belongs to the closure $\overline{\S}$. Then $\Mod_{\C}(\S)$ is locally small.
\end{lemma}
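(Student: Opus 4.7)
The plan is to bound the size of $\Hom_{\Mod_\C(\S)}(M,N)$ for arbitrary models $M,N : \E \to \C$ of $\S$ by showing that every morphism $\alpha : M \to N$ is uniquely determined by its restriction $(\alpha_B)_{B \in \E'}$ to the small subcategory $\E'$. Since $\E'$ is small and $\C$ is (by our blanket convention) locally small, the collection $\prod_{B \in \E'} \Hom_\C(M(B),N(B))$ is a set, so once injectivity of the restriction map is established, local smallness follows immediately.

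The main input is the translation of the hypothesis into a joint-monomorphism statement. By \cref{monocone}, the cone in $\E$ attached to the family $\{f : A \to B : B \in \E',\, f : A \to B\}$ is a limit cone in a category exactly when the family is jointly monomorphic there. The assumption that this cone lies in $\overline{\S}$ therefore says, via \cref{closure} and \cref{modyon}, that for every $\C$-valued model $P$ of $\S$ and every $A \in \E$ the family
\[\{P(f) : P(A) \to P(B) : B \in \E',\, f : A \to B\}\]
is jointly monomorphic in $\C$. I will apply this to $P = N$.

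Now fix $A \in \E$ and suppose $\alpha,\beta : M \to N$ agree on all objects of $\E'$. For each $f : A \to B$ with $B \in \E'$, naturality yields $N(f) \circ \alpha_A = \alpha_B \circ M(f) = \beta_B \circ M(f) = N(f) \circ \beta_A$. Since the family $\{N(f)\}_{B,f}$ is jointly monomorphic by the previous paragraph, this forces $\alpha_A = \beta_A$. As $A \in \E$ was arbitrary, $\alpha = \beta$, so the restriction map $\Hom_{\Mod_\C(\S)}(M,N) \to \prod_{B \in \E'} \Hom_\C(M(B),N(B))$ is injective and $\Mod_\C(\S)$ is locally small.

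There is no real obstacle here; the only subtle point is unpacking \cref{closure} and \cref{monocone} correctly to see that membership of a monomorphism cone in $\overline{\S}$ transfers to joint monicity of the image family under any $\C$-valued model, which is exactly what makes the separation argument above work.
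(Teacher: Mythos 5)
Your proof is correct and follows essentially the same route as the paper: translate membership of the monomorphism cone in $\overline{\S}$ into joint monicity of the family $\{N(f) : N(A) \to N(B)\}$ for the target model, then use naturality to conclude that morphisms into a model are determined by their components on $\E'$. The paper's proof is just a terser version of exactly this argument.
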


\begin{proof}
The assumption implies that for every $\C$-valued $\S$-model $M$ and every object $A \in \E$ the family $\{M(A) \to M(B) : B \in \E',\, A \to B\}$ is jointly monomorphic in $\C$. Thus, if $N : \E \to \C$ is any functor, morphisms $N \to M$ are determined by their restrictions to $\E'$.
\end{proof}


\section{Compactness} \label{sec:compact}

Isbell \cite{isbell1} calls a category $\A$ \emph{compact} when every hypercocontinuous functor on $\A$ is a left adjoint, where a functor is called \emph{hypercocontinuous} if it preserves all (possibly large) colimits. We refer to \cite{borger,ulmer} for results on compact categories. Since our colimits are small by definition, we need a slightly different notion, which is of course not new, only the name is.

\begin{defi}
We call a category $\A$ \emph{strongly compact} when every cocontinuous functor $\A \to \B$ into some category $\B$ is a left adjoint.
\end{defi}
 
Durov calls such categories \emph{complete} or \emph{total} in the context of his theory of vectoids \cite{durov}; both terms however already have a different meaning and therefore should be avoided. Rattray has abbreviated the dual condition by \emph{LAP} that stands for ``left adjoint property'' \cite{rattray}. In this terminology, strongly compact categories have \emph{RAP}, the ``right adjoint property''. We prefer to describe these categories with an adjective instead.

The following result is already known for compact categories and can be proven in the same way.

\begin{lemma} \label{isbellchar}
The following are equivalent for a category $\A$.
\begin{enumerate}
\item $\A$ is strongly compact.
\item Every cocontinuous functor $\A \to \B$ into a cocomplete category $\B$ is a left adjoint.
\item Every continuous functor $\A^{\op} \to \Set$ is representable.
\item The corestricted Yoneda embedding $\A \hookrightarrow \Hom^{\c}(\A^{\op},\Set)$ is an equivalence.
\end{enumerate}
\end{lemma}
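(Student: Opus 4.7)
The plan is to prove the cycle $(1) \Rightarrow (2) \Rightarrow (3) \Rightarrow (4) \Rightarrow (1)$.

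The implication $(1) \Rightarrow (2)$ is just a specialization. For $(2) \Rightarrow (3)$, given a continuous $F : \A^{\op} \to \Set$, I would reinterpret it as a cocontinuous functor $F' : \A \to \Set^{\op}$ (using that colimits in $\A$ become limits in $\Set$ under $F$, equivalently colimits in $\Set^{\op}$ under $F'$). Since $\Set$ is complete, $\Set^{\op}$ is cocomplete and locally small, so $(2)$ furnishes a right adjoint $G : \Set^{\op} \to \A$ to $F'$. Unpacking the adjunction on the terminal set $1 \in \Set$ gives a natural bijection $F(A) \cong \Hom_{\Set}(1,F(A)) \cong \Hom_{\A}(A,G(1))$, so $F$ is represented by $G(1)$.

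For $(3) \Rightarrow (4)$, the Yoneda embedding always factors through the full subcategory of continuous presheaves since representables are continuous, and it is fully faithful by the Yoneda lemma; $(3)$ is exactly essential surjectivity. For $(4) \Rightarrow (1)$, given a cocontinuous $H : \A \to \B$, I would construct a right adjoint objectwise: for each $b \in \B$ the presheaf $A \mapsto \Hom_{\B}(H(A),b)$ lies in $\Hom^{\c}(\A^{\op},\Set)$, because $H$ turns colimits in $\A$ into colimits in $\B$ and $\Hom_{\B}(-,b)$ turns those into limits in $\Set$. By $(4)$ this presheaf is representable, and the representing object depends functorially on $b$ by the Yoneda lemma, giving the desired right adjoint.

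The only non-routine step is $(2) \Rightarrow (3)$, where one has to be slightly careful that $\Set^{\op}$ really is a cocomplete (and locally small) category so that $(2)$ is applicable; everything else is a direct unwinding of definitions and the Yoneda lemma, so I do not anticipate any real obstacle.
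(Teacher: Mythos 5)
Your proposal is correct and follows essentially the same route as the paper: the same use of the terminal set to extract a representing object for $(2) \Rightarrow (3)$, and the same objectwise construction of a right adjoint via representability of $\Hom_{\B}(H(-),b)$ for the final implication (which the paper phrases as $(3) \Rightarrow (1)$ rather than $(4) \Rightarrow (1)$, an immaterial difference since $(3) \Leftrightarrow (4)$ is trivial). Your extra care about $\Set^{\op}$ being locally small and cocomplete is consistent with the paper's standing conventions.
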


\begin{proof}
$(1) \Rightarrow (2)$ and $(3) \Leftrightarrow (4)$ are trivial. $(2) \Rightarrow (3)$. If $F : \A^{\op} \to \Set$ is continuous, then $F^{\op} : \A \to \Set^{\op}$ is cocontinuous, hence it has a right adjoint $G : \Set^{\op} \to \A$. Then $F$ is represented by $G(1^{\op}) \in \A$ because of
\[F(A) \cong \Hom(1,F(A)) \cong \Hom(F^{\op}(A),1^{\op}) \cong \Hom(A,G(1^{\op})).\]
$(3) \Rightarrow (1)$. Let $H : \A \to \B$ be a cocontinuous functor. Then for every $B \in \B$ the functor $\Hom(H(-),B) : \A^{\op} \to \Set$ is continuous and therefore representable, which precisely means that $H$ is a left adjoint.
\end{proof}

We remark that the corestricted Yoneda embedding $\A \hookrightarrow \Hom^{\c}(\A^{\op},\Set)$ has been studied in general by Lambek \cite{lambek} and Makkai-Paré \cite[§6.4]{makkai}.
 
\begin{cor} \label{isbelllimit}
Every strongly compact category is the category of models of a (possibly large) realized limit sketch, in particular it is complete. The sketch can be chosen so that every model is representable.
\end{cor}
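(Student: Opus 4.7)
The plan is to exploit condition (4) of \cref{isbellchar}: since $\A$ is strongly compact, the corestricted Yoneda embedding gives an equivalence $\A \simeq \Hom^{\c}(\A^{\op},\Set)$. It therefore suffices to exhibit $\Hom^{\c}(\A^{\op},\Set)$ as the category of models of a realized limit sketch whose models are all representable.

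First I would form a sketch $(\E,\S)$ by setting $\E \coloneqq \A^{\op}$ (which is locally small by our standing convention) and taking $\S$ to be the class of all small limit cones in $\A^{\op}$. This is realized by construction: each distinguished cone is tautologically a limit cone in $\E$, so each representable $\Hom_{\A^{\op}}(A,-)$ is automatically a model. Next I would note that a functor $M : \A^{\op} \to \Set$ is an $\S$-model precisely when it sends every small limit cone in $\A^{\op}$ to a limit cone in $\Set$, i.e.\ exactly when $M$ is continuous. Hence $\Mod(\S) = \Hom^{\c}(\A^{\op},\Set)$, and composing with the Yoneda equivalence from \cref{isbellchar} yields $\A \simeq \Mod(\S)$. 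Condition (3) of \cref{isbellchar} then tells us that every $\S$-model is representable, giving the last clause of the statement, and completeness of $\A$ is immediate from \cref{limitsofmodels} applied to $\C = \Set$.

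The one point worth flagging, though not really an obstacle, is that $\S$ is typically a proper class rather than a set. This is permitted by the convention of allowing large sketches adopted throughout the paper, so no smallness reduction is required to deduce the conclusion.
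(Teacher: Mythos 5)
Your proof is correct and follows exactly the route the paper intends: the corollary is stated without proof as an immediate consequence of \cref{isbellchar}, and the sketch $(\A^{\op},\{\text{all small limit cones}\})$ with $\Mod(\S)=\Hom^{\c}(\A^{\op},\Set)$, representability of models via condition (3), and completeness via \cref{limitsofmodels} is precisely the intended argument. No issues.
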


\begin{proof}
If $\A$ is strongly compact, then \cref{isbellchar} implies that $\A$ is modeled by the limit sketch $(\A^{\op},\{\text{all limit cones}\})$.
\end{proof}

In \cref{modiscompact} below we will prove a (stronger) converse of \cref{isbelllimit}.

\begin{rem} \label{adjointgame}
If $\A$ and $\B^{\op}$ are strongly compact, then taking the right or left adjoint implements an equivalence of categories
\[\Hom_{\c}(\A,\B) \simeq \Hom^{\c}(\B,\A)^{\op}.\]
This makes strongly compact categories useful: we can easily switch between cocontinuous and continuous functors.
\end{rem}

\begin{rem} \label{saft}
The Special Adjoint Functor Theorem precisely says that any cocomplete wellcopowered category with a generator is strongly compact. This gives lots of examples of strongly compact categories -- every locally presentable category, but also $\Top$, $\Set^{\op}$ and $\Ab^{\op}$ for instance. However, $\Grp^{\op}$ is not compact \cite{isbell1} and hence not strongly compact: for every infinite cardinal $\lambda$ we pick a simple group $G_{\lambda}$ with $\lambda$ elements (for example, the alternating group on $\lambda$ elements). The functor
\[\textstyle \prod_{\lambda} \Hom(G_{\lambda},-) : \Grp \to \Set\]
is well-defined, continuous and not representable.
\end{rem}

\begin{rem} \label{rattray}
Rattray has proven that every category that is monadic over a strongly compact category is also strongly compact \cite[Theorem 1]{rattray}. The same proof works for compact categories \cite[Remarks 2.3]{borger}.
\end{rem}
 
\begin{rem}\label{topiscomp}
By \cref{saft} and \cref{isbelllimit} we see that $\Top$ is the category of models of a realized limit sketch, which has to be large since $\Top$ is not locally presentable. However, the limit sketch from the proof is \emph{very} large -- it contains \emph{every} space -- and therefore not very useful in practice. Specifically, it does not give a very concrete way of internalizing topological spaces: Following \cref{internalize} we could define topological space objects internal to a complete category $\C$ as continuous functors
\[\Top^{\op} \to \C,\]
but how do they look like, really? Using strong compactness of $\Top$, it is not hard to see that they correspond to continuous functors
\[\C^{\op} \to \Top\]
whose composition $\C^{\op} \to \Top \to \Set$ is representable. But what does this mean, specifically, in terms of data inside of $\C$? We will answer these questions in \cref{sec:topspace,sec:topsketch}.
\end{rem}

\begin{rem}
The category $\Meas$ of measurable spaces is also strongly compact by \cref{saft} and hence modeled by a large realized limit sketch. It is an interesting question if we can shrink that sketch down to something more useful (as we will do for $\Top$).
\end{rem}

\begin{rem}
Not every strongly compact category is cocomplete. For example, Adámek has constructed a monad $\T$ on the category $\Gra$ of simple directed graphs such that $\Alg(\T)$ is not cocomplete \cite{adamek}. Since $\Gra$ is locally finitely presentable and hence strongly compact, $\Alg(\T)$ is strongly compact as well by \cref{rattray}. On the other hand, one can argue that cocompleteness should be added to the definition of a strongly compact category because we are dealing with cocontinuous functors on them. Cocomplete strongly compact categories have been studied by Ivan Di Liberti under the name \emph{LAFT categories} (unpublished work). The following result also supports this adjustment.
\end{rem}

\begin{lemma} \label{compactfun}
If $\A$ is cocomplete and strongly compact, then for every small category $\I$ the functor category $\Hom(\I,\A)$ is strongly compact as well.
\end{lemma}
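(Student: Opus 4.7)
My plan is to apply \cref{isbellchar} and reduce to showing that every continuous functor $F \colon \Hom(\I,\A)^{\op} \to \Set$ is representable. As a first step, for each $i \in \I$ I construct the left adjoint $L_i \colon \A \to \Hom(\I,\A)$ to the evaluation $\ev_i$, which exists because $\A$ is cocomplete and $\I$ is small; explicitly $L_i(A)(j) = \coprod_{\Hom_\I(i,j)} A$. Since $L_i$ is cocontinuous, the composite $F_i \coloneqq F \circ L_i^{\op} \colon \A^{\op} \to \Set$ is continuous, and by the strong compactness of $\A$ it is representable by some $X_i \in \A$, giving a natural isomorphism $F(L_i(A)) \cong \Hom_\A(A, X_i)$.

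Next I upgrade $i \mapsto X_i$ to a functor $X \colon \I \to \A$. Each $\alpha \colon i \to i'$ in $\I$ corresponds via the adjunction $L_{i'} \dashv \ev_{i'}$ to a natural transformation $L_{i'} \to L_i$; applying $F$ produces $F_i \to F_{i'}$, and the Yoneda lemma then yields a morphism $X(\alpha) \colon X_i \to X_{i'}$. Functoriality follows from uniqueness in Yoneda, and the above isomorphism becomes natural in $(i,A)$ jointly, that is $F(L_i(A)) \cong \Hom_{\Hom(\I,\A)}(L_i(A), X)$.

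The final and most delicate step is to extend this isomorphism from the ``free'' diagrams $L_i(A)$ to an arbitrary $G \in \Hom(\I,\A)$. For this I invoke the co-Yoneda presentation
\[ G \;\cong\; \int^{i \in \I} L_i(G(i)), \]
which realizes $G$ as a small colimit of such free functors (smallness of $\I$ is essential here to keep the coend small). Both $F$ and $\Hom_{\Hom(\I,\A)}(-,X)$ are continuous on $\Hom(\I,\A)^{\op}$, so they transform this coend into the corresponding end, giving
\[ F(G) \;\cong\; \int_{i \in \I} \Hom_\A(G(i), X(i)) \;\cong\; \Hom_{\Hom(\I,\A)}(G, X), \]
where the last isomorphism is the standard end description of natural transformations. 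This produces the desired natural isomorphism $F \cong \Hom(-,X)$, exhibiting $F$ as representable.

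The main obstacle I anticipate is this coend/density step: one must verify that the co-Yoneda presentation $G \cong \int^i L_i(G(i))$ combined with continuity of $F$ really yields an isomorphism natural in $G$, and not merely a pointwise one. Everything else reduces to a routine application of the adjunction $L_i \dashv \ev_i$, the Yoneda lemma, and the strong compactness of $\A$.
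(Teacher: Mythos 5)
Your proof is correct and follows essentially the same route as the paper's: both reduce to the ``free'' objects $L_i(A)=\Hom_{\I}(i,-)\otimes A$, apply strong compactness of $\A$ to the composite with $L_i$, and reassemble the answer through the coend presentation $G\cong\int^{i}L_i(G(i))$ and the end formula for natural transformations. The only (cosmetic) difference is that you work with criterion (3) of \cref{isbellchar} (representability of continuous $\Set$-valued functors on the opposite category), whereas the paper works with criterion (2) and directly builds the right adjoint $R'(B)(i)=R(i)(B)$ of a cocontinuous functor into an arbitrary cocomplete $\B$; your representing object $X$ with $X(i)=X_i$ is exactly that right adjoint evaluated at a point.
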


\begin{proof}
This follows from \cite[Theorem 6.2]{andre}. Since that paper uses old terminology, we give a proof for the convenience of the reader. Let $H : \Hom(\I,\A) \to \B$ be a cocontinuous functor, where $\B$ is cocomplete. It corresponds to the functor $G : \I^{\op} \to \Hom_{\c}(\A,\B)$ defined by (where $\otimes$ denotes the copower)
\[G(i)(A) \coloneqq H\bigl(\Hom(i,-) \otimes A\bigr),\]
and conversely we have to coend expression
\[H(F) = \int^{i \in \I} G(i)\bigl(F(i)\bigr)\]
for $F \in \Hom(\I,\A)$ because of the fundamental formula (sometimes called the co-Yoneda Lemma)
\[F = \int^{i \in \I} \Hom(i,-) \otimes F(i).\]
In fact, this describes an equivalence (see also \cite[Lemma 1.1]{pitts} or \cite[Section 4]{andre})
\[\Hom_{\c}\bigl(\Hom(\I,\A),\B\bigr) \simeq \Hom\bigl(\I^{\op},\Hom_{\c}(\A,\B)\bigr).\]
Since $\A$ is strongly compact, for each $i \in \I$ there is a right adjoint $R(i) : \B \to \A$ of $G(i)$, and every morphism $i \to j$ in $\I$ induces a morphism $G(j) \to G(i)$, hence a morphism $R(i) \to R(j)$. Thus, $R$ is a functor $\I \to \Hom(\B,\A)$, which yields a functor $R' : \B \to \Hom(\I,\A)$.

We claim that $R'$ is right adjoint to $H$. To see this, let $F \in \Hom(\I,\A)$ and $B \in \B$. Then
\begin{align*}
\Hom(H(F),B) & \cong \int_{i \in \I} \Hom\bigl(G(i)(F(i)),B\bigr) \\
& \cong \int_{i \in \I} \Hom\bigl(F(i),R(i)(B)\bigr) \\
& \cong \int_{i \in \I} \Hom\bigl(F(i),R'(B)(i)\bigr) \\
& \cong \Hom(F,R'(B)). \qedhere
\end{align*}
\end{proof}
 
A lot more things could be said (and asked) about strongly compact categories, but we will not digress any further here.


\section{Small models} \label{sec:small}

The category of all $\S$-models of a limit sketch $\S$ is sometimes too large. For example, models of the empty limit sketch $(\E,\emptyset)$ are just functors $\E \to \Set$, which shows that $\Mod(\S)$ does not have to be locally small and therefore can be regarded as pathological. We need to look at a locally small subcategory of \emph{small models}. First, we recall the notion of a small functor:

\begin{defi}
A functor $\E \to \Set$ is called \emph{small} when it is a colimit of representable functors; recall that colimits are assumed to be small. The category of small functors $\E \to \Set$ is denoted by $\Hom^{\s}(\E,\Set)$ .
\end{defi}

\begin{prop} \label{small}
Let $M : \E \to \Set$ be a functor. The following conditions are equivalent:
\begin{enumerate}
\item The functor $M$ is small.
\item There is a functor $M': \E' \to \Set$ on a small category $\E'$ such that $M$ is the left Kan extension of $M'$ along a functor $\E' \to \E$.
\item There is a small full subcategory $\E' \subseteq \E$ such that $M$ is the left Kan extension of its restriction $M|_{\E'} : \E' \to \Set$ along the inclusion $\E' \hookrightarrow \E$.
\end{enumerate}
Moreover, the category $\Hom^{\s}(\E,\Set)$ is locally small and closed under colimits in $\Hom(\E,\Set)$. In particular, it is cocomplete.
\end{prop}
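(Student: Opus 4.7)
The plan is to establish the equivalences $(1) \Leftrightarrow (2) \Leftrightarrow (3)$ first, and then derive the two properties of $\Hom^{\s}(\E,\Set)$ from the characterization in (1).

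For the equivalences, $(3) \Rightarrow (2)$ is immediate by taking $F = \iota : \E' \hookrightarrow \E$. For $(2) \Rightarrow (1)$, I would combine two standard facts. First, since $\E'$ is small, every $M' : \E' \to \Set$ is a small colimit of representables, by the density / co-Yoneda formula
\[M' \cong \int^{B \in \E'} M'(B) \otimes \Hom_{\E'}(B,-).\]
Second, left Kan extension along any $F : \E' \to \E$ is cocontinuous and sends the representable $\Hom_{\E'}(B,-)$ to $\Hom_{\E}(F(B),-)$, an easy Yoneda computation from the universal property of $\mathrm{Lan}_F$. Applying $\mathrm{Lan}_F$ to the above formula exhibits $M = \mathrm{Lan}_F M'$ as a small colimit of representables in $\E$. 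For $(1) \Rightarrow (3)$, given $M = \colim_i \Hom(A_i,-)$ over a small index category, I would let $\E' \subseteq \E$ be the full subcategory spanned by the $A_i$. Fullness of $\iota$ gives $\Hom_{\E}(A_i,-)|_{\E'} = \Hom_{\E'}(A_i,-)$, so $M|_{\E'} = \colim_i \Hom_{\E'}(A_i,-)$; applying $\mathrm{Lan}_\iota$ and using the same two facts recovers $M$.

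For closure under small colimits, let $(M_i)_{i \in I}$ be a small diagram of small functors, and write each $M_i = \colim_{j \in J_i} \Hom(A_{i,j},-)$ with $J_i$ small. Then the pointwise colimit in $\Hom(\E,\Set)$ satisfies
\[\colim_{i \in I} M_i \;\cong\; \colim_{(i,j) \in \int_I J_i} \Hom(A_{i,j},-),\]
which is a small colimit of representables, hence small. This shows both that $\Hom^{\s}(\E,\Set)$ is closed under small colimits in $\Hom(\E,\Set)$, and that it is cocomplete with pointwise colimits. For local smallness, if $M = \colim_i \Hom(A_i,-)$ is small and $N : \E \to \Set$ is arbitrary, then turning the colimit in the first argument into a limit and applying Yoneda gives
\[\Hom(M,N) \;\cong\; \lim_i \Hom\bigl(\Hom(A_i,-),N\bigr) \;\cong\; \lim_i N(A_i),\]
a small limit of sets, hence a set.

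The only mildly delicate step is the identity $\mathrm{Lan}_F \Hom_{\E'}(B,-) \cong \Hom_{\E}(F(B),-)$, which is the bridge between the "colimit of representables" and the "Kan extension from a small subcategory" descriptions; everything else is formal bookkeeping with colimits, Yoneda, and the smallness of Grothendieck-type indexing categories.
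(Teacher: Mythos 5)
Your treatment of the equivalences $(1)\Leftrightarrow(2)\Leftrightarrow(3)$ and of local smallness is correct; the paper simply cites \cite[Proposition 4.83]{kelly} for the equivalences and gives exactly your computation $\Hom(M,N)\cong\lim_i N(A_i)$ for local smallness, so on those points you are, if anything, more self-contained than the paper.

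The genuine gap is in the closure under colimits. After choosing presentations $M_i\cong\colim_{j\in J_i}\Hom(A_{i,j},-)$ \emph{independently} for each $i$, the symbol $\int_I J_i$ does not denote a category: the Grothendieck construction needs $i\mapsto J_i$ to be a (pseudo)functor on $I$ and $(i,j)\mapsto A_{i,j}$ to be functorial on the total category, which means the transition maps $M_i\to M_{i'}$ of your diagram would have to be lifted to maps of the chosen presentations -- and there is no reason this is possible. This is not cosmetic: in general the class of one-step small colimits of objects from a given class is \emph{not} closed under further colimits (one must iterate transfinitely), which is precisely why the paper later has to introduce \emph{weakly small} models as iterated colimits of representables. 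The standard repair -- and the reason the paper proves closure via condition (3), citing \cite[Proposition 5.34]{kelly} -- is to make the presentations coherent: choose for each $i$ a small full $\E'_i$ as in (3), let $\E'\subseteq\E$ be the small full subcategory on the union of their object sets, and consider the endofunctor $C\coloneqq \mathrm{Lan}_{\iota}\circ(-)|_{\E'}$ of $\Hom(\E,\Set)$, where $\iota:\E'\hookrightarrow\E$. Then $C$ is cocontinuous, $C(N)$ is always small (the coend formula over the small $\E'$ exhibits it as a small colimit of representables), and the counit $C\Rightarrow\id$ is a \emph{natural} transformation that is an isomorphism on each $M_i$ (by your own computation: $M_i$ is a small colimit of representables at objects of $\E'$, and the counit is an isomorphism on such representables). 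Naturality of the counit supplies exactly the coherence your $J_i$'s lack: it gives an isomorphism of $I$-diagrams between $C\circ M_{\bullet}$ and $M_{\bullet}$, whence $\colim_i M_i\cong\colim_i C(M_i)\cong C\bigl(\colim_i M_i\bigr)$ is small.
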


\begin{proof}
The equivalence between the conditions is proven in \cite[Proposition 4.83]{kelly}. Notice that Kelly uses the term \emph{accessible functor}, which nowadays has a different meaning \cite[Definition 2.16]{adamekrosicky}. The third condition can be used to show that $\Hom^{\s}(\E,\Set)$ is closed under colimits in $\Hom(\E,\Set)$, see \cite[Proposition 5.34]{kelly}. If $M : \E \to \Set$ is a small functor, say
\[M \cong {\colim}_{i \in I} \Hom(A_i,-),\]
and $N : \E \to \Set$ is any functor, then
\[\Hom(M,N) \cong {\lim}_{i \in I} \Hom(\Hom(A_i,-),N) \cong {\lim}_{i \in I} N(A_i)\]
is a set, showing that $\Hom^{\s}(\E,\Set)$ is locally small.
\end{proof}

Actually, $\Hom^{\s}(\E,\Set)$ is the free cocompletion of $\E^{\op}$ by \cite[Proposition 5.35]{kelly}, but our theory will contain this result as a special case (\cref{kov}).

\begin{defi} \label{smalldef}
Let $(\E,\S)$ be a realized limit sketch. A model $M : \E \to \Set$ of $\S$ is called \emph{small} when $M$ is a colimit of representable models inside $\Mod(\S)$. We denote the category of small models by $\Mod^{\s}(\S)$.
\end{defi}

\begin{rem}
For $S=\emptyset$ we recover the notion of a small functor. Notice that we do not require that the colimit takes place in $\Hom(\E,\Set)$, which would be a much too strong condition. Thus, the underlying functor of a small model does not have to be small. Notice that $\Mod^{\s}(\S)$ is indeed locally small by the same argument as in \cref{small}. Unfortunately, it is not cocomplete in this generality.
\end{rem}

\begin{rem}
More generally, we may call a model $M$ \emph{weakly small} if $M$ is an iterated colimit of representable models, which means that it belongs to the smallest class of models closed under colimits and containing the representable models. Even though our theory also works for weakly small models, we will not use that notion, also because in all our examples of interest except for $\S = \emptyset$ it will turn out that every model is already small.
\end{rem}

\begin{lemma} \label{smallchar}
Let $(\E,\S)$ be a realized limit sketch. Let $M : \E \to \Set$ be an $\S$-model such that there is small full subcategory $\E' \subseteq \E$ with the property that for every $\S$-model $N$ the restriction map
$\Hom(M,N) \to \Hom(M|_{\E'},N|_{\E'})$
is an isomorphism. Then $M$ is a small $\S$-model.
\end{lemma}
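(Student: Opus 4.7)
The plan is to exhibit $M$ explicitly as a colimit of representable models inside $\Mod(\S)$ indexed by the small category of elements of its restriction to $\E'$. Concretely, let $I \coloneqq \int (M|_{\E'})$ be the (small) category of elements of the restricted functor $M|_{\E'} : \E' \to \Set$, and consider the diagram
\[D : I \to \Mod(\S), \qquad (A,x) \mapsto \Hom(A,-),\]
which takes values in representable models since $\S$ is realized. For each object $(A,x)$ of $I$, the element $x \in M(A)$ corresponds via the Yoneda lemma to a natural transformation $\Hom(A,-) \to M$, and these assemble into a cocone from $D$ to $M$ in $\Mod(\S)$. The goal is to show that this cocone is a colimit cocone in $\Mod(\S)$.

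To verify the universal property, I would fix an arbitrary $\S$-model $N$ and chain the following natural isomorphisms:
\[\Hom_{\Mod(\S)}(M,N) \;\cong\; \Hom(M|_{\E'},N|_{\E'}) \;\cong\; {\lim}_{(A,x) \in I^{\op}}\, N(A) \;\cong\; {\lim}_{(A,x) \in I^{\op}} \Hom_{\Mod(\S)}\bigl(\Hom(A,-),N\bigr).\]
The first isomorphism is the hypothesis on $M$. The second uses the Yoneda density theorem applied to the small functor $M|_{\E'} : \E' \to \Set$, which expresses $M|_{\E'}$ as the canonical colimit $\colim_{(A,x) \in I} \Hom_{\E'}(A,-)$ in $\Hom(\E',\Set)$ and then applies $\Hom(-,N|_{\E'})$. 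The third isomorphism is just the Yoneda lemma in $\Mod(\S)$ for the representable $\Hom(A,-)$. Composing shows that $\Hom_{\Mod(\S)}(M,N)$ is naturally (in $N$) the set of compatible cocones from $D$ to $N$, which is precisely the universal property of $\colim_I D$ in $\Mod(\S)$.

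Finally, I would check that under the composite isomorphism the identity of $M$ corresponds to the canonical cocone described above; this is a straightforward unwinding of the Yoneda correspondences and the density presentation of $M|_{\E'}$, and requires no substantive input. Once this is done, $M$ is by definition a colimit of representable models inside $\Mod(\S)$, hence a small model.

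The main technical point is not really an obstacle but a conceptual care: we are not claiming (and do not need) that $\Mod(\S)$ is cocomplete or that the colimit of $D$ exists a priori in $\Hom(\E,\Set)$. The argument shows directly, via the representability of the functor $N \mapsto \lim_{I^{\op}} N(A)$ on $\Mod(\S)$, that $M$ itself serves as this colimit. The hypothesis on $M$ is exactly what bridges the gap between the Yoneda density of $M|_{\E'}$ on $\E'$ and the needed universal property on all of $\E$, allowing us to bypass the absence of a reflector from $\Hom(\E,\Set)$ to $\Mod(\S)$ flagged in \cref{smallsketch}.
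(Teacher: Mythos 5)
Your proposal is correct and follows essentially the same route as the paper: both pass to the category of elements of $M|_{\E'}$, identify $\Hom(M,N)$ with $\lim_{(A,x)} N(A)$ via the hypothesis, the density presentation of $M|_{\E'}$, and the Yoneda lemma, and conclude that $M$ is the colimit of the corresponding diagram of representables in $\Mod(\S)$. The only difference is that you spell out the verification that the identity of $M$ corresponds to the canonical cocone, which the paper leaves implicit.
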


\begin{proof}
Consider the small category $I$ of elements of $M|_{\E'}$ that has objects $(A',m)$ with $A' \in \E'$ and $m \in M(A')$, and morphisms are defined in the obvious way. Then for every $\S$-model $N$ we have
\[\lim_{(A',m) \in I} \Hom(\Hom(A',-),N)  \cong \lim_{(A',m) \in I} N(A') \cong \Hom(M|_{\E'},N|_{\E'}) \cong \Hom(M,N).\]
Thus, in $\Mod(S)$ we have an isomorphism
\[M \cong \colim_{(A',m) \in I} \Hom(A',-). \qedhere\]
\end{proof}

\begin{rem}
We do not know if the converse of \cref{smallchar} is true (which is certainly the case for $\S=\emptyset$). The natural proof would start with a colimit expression $M \cong \colim_{i \in I} \Hom(A_i,-)$ induced by elements $u_i \in M(A_i)$ and defining $ \E' \coloneqq \{A_i : i \in I\}$. Then the restriction map $\Hom(M,N) \to \Hom(M|_{\E'},N|_{\E'})$ is injective, and every $\beta \in \Hom(M|_{\E'},N|_{\E'})$ induces some $\alpha \in \Hom(M,N)$ with $\alpha_{A_i}(u_i) = \beta_{A_i}(u_i)$, but it is not clear if this implies $\alpha_{A_i} = \beta_{A_i}$.
\end{rem}


\section{Universal property of the category of models} \label{sec:univ}

Let us recall the notion of the tensor product of functors, which is in fact a special case of a weighted colimit \cite[Section 3.1]{kelly}.

\begin{defi} \label{tp}
Let $\E,\C$ be categories. Let $M : \E \to \Set$ and $N : \E^{\op} \to \C$ be two functors. A \emph{tensor product} $N \otimes_{\E} M$ is an object of $ \C$ together with natural isomorphisms
\[\Hom(N \otimes_{\E} M,T) \cong \Hom\bigl(M,\Hom(N(-),T)\bigr)\]
for $T \in \C$. Here, we have $\Hom(N(-),T) : \E \to \Set$.
\end{defi}

\begin{prop} \label{tpex}
Let $(\E,\S)$ be a realized limit sketch. Let $M : \E \to \Set$ be a small model of $\S$, and let $N : \E^{\op} \to \C$ be a $\C$-valued model of $\S^{\op}$, where $\C$ is cocomplete. Then the tensor product $N \otimes_{\S} M \coloneqq N \otimes_{\E} M$ exists. Moreover, it defines a functor
\[\Mod^{\s}(\S) \to \C,\quad M \mapsto N \otimes_{\S} M\]
with natural isomorphisms
\[N \otimes_{\S} \Hom(A,-) \cong N(A).\]
\end{prop}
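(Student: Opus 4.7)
The plan is to reduce everything to the representable case via the colimit presentation of a small model, using the Yoneda lemma as the main calculational engine.

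First I would handle the representable case. If $M = \Hom(A,-)$, then for any $T \in \C$ the Yoneda lemma applied to the functor $\Hom(N(-),T) : \E \to \Set$ gives a natural isomorphism $\Hom(\Hom(A,-),\Hom(N(-),T)) \cong \Hom(N(A),T)$, which shows that $N(A)$ serves as $N \otimes_\S \Hom(A,-)$ and witnesses the last claimed isomorphism directly.

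For a general small model $M$, I would invoke \cref{smalldef} to fix a colimit presentation $M \cong \colim_{i \in I} \Hom(A_i,-)$ inside $\Mod(\S)$, and define
\[N \otimes_\S M \coloneqq \colim_{i \in I} N(A_i),\]
which exists because $\C$ is cocomplete. To establish the universal property, the crucial auxiliary observation is that $\Hom(N(-),T) : \E \to \Set$ is itself a model of $\S$: since $N$ is a $\C$-valued model of $\S^{\op}$, it sends each distinguished cone $\{A \to A_i\}$ of $\S$ to a colimit cocone $\{N(A_i) \to N(A)\}$ in $\C$, which $\Hom(-,T)$ then converts into a limit cone in $\Set$. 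With this in hand I would compute
\begin{align*}
\Hom(N \otimes_\S M, T)
&\cong \lim_i \Hom(N(A_i),T) \\
&\cong \lim_i \Hom(\Hom(A_i,-),\Hom(N(-),T)) \\
&\cong \Hom\bigl(\colim_i \Hom(A_i,-),\Hom(N(-),T)\bigr) \\
&\cong \Hom(M,\Hom(N(-),T)),
\end{align*}
where the intermediate steps use Yoneda and the fact that the inclusion $\Mod(\S) \hookrightarrow \Hom(\E,\Set)$ is fully faithful, so that Hom-sets into the model $\Hom(N(-),T)$ from a colimit formed inside $\Mod(\S)$ coincide with those computed in $\Hom(\E,\Set)$.

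Finally, well-definedness of $N \otimes_\S M$ up to unique isomorphism is automatic from the universal property, which then also yields functoriality in $M$: every morphism $M \to M'$ in $\Mod^{\s}(\S)$ induces a unique morphism $N \otimes_\S M \to N \otimes_\S M'$, and uniqueness guarantees that composition and identities are preserved. I expect the main point requiring care to be the bookkeeping about the ambient category of the colimit presentation: the defining colimit $\colim_i \Hom(A_i,-)$ lives in $\Mod(\S)$ rather than in $\Hom(\E,\Set)$, and the Yoneda chain only works because the test functor $\Hom(N(-),T)$ is recognised as an $\S$-model.
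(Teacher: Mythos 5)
Your proposal is correct and follows essentially the same route as the paper: reduce to the representable case via Yoneda, use that $\Hom(N(-),T)$ is an $\S$-model (so the colimit presentation of $M$ in $\Mod(\S)$ can be tested against it), and realize $N \otimes_{\S} M$ as $\colim_i N(A_i)$. You also correctly flag the one genuinely delicate point — that the colimit lives in $\Mod(\S)$, not in $\Hom(\E,\Set)$, and that the argument hinges on recognising the test functor as a model — which is exactly the point the paper's proof emphasizes.
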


\begin{proof}
The proof uses standard techniques from category theory, but we spell out the details for the convenience of the reader. We need to show that for every small model $M$ the functor
\[\C \to \Set, ~ T \mapsto \Hom\bigl(M,\Hom(N(-),T)\bigr)\]
is representable. Notice that $\Hom(N(-),T)$ is a model of $\S$ precisely since $N$ is a model of $\S^{\op}$ (\cref{modyon}). Since $M$ is small, there is a colimit decomposition $M = \colim_i M_i$ with representable models $M_i$. It follows
\[\Hom\bigl(M,\Hom(N(-),T)\bigr) \cong {\lim}_i \Hom\bigl(M_i,\Hom(N(-),T)\bigr).\]
Since $\C$ is cocomplete, a limit of representable functors $\C \to \Set$ is representable (namely, by the colimit of the representing objects). Therefore, it suffices to show the claim when $M$ is representable, say $M = \Hom(A,-)$ for some $A \in \E$. In that case, the Yoneda Lemma implies that the functor is isomorphic to $\Hom(N(A),-)$, thus represented by $N(A) \in \C$, and we are done. We can make this a bit more explicit: If $M = \colim_i \Hom(A_i,-)$ is a colimit of representable models, then $\colim_i N(A_i)$ satisfies the universal property of $N \otimes_{\S} M$. Any morphism $M \to M'$ induces a natural morphism $\Hom\bigl(M',\Hom(N(-),T)\bigr) \to \Hom\bigl(M,\Hom(N(-),T)\bigr)$ and hence a morphism $N \otimes_{\S} M \to N \otimes_{\S} M'$ by the Yoneda Lemma.
\end{proof}

\begin{prop} \label{tpco}
With the notation of \cref{tpex}, for every $\C$-valued $\S^{\op}$-model $N$ the functor $- \otimes_{\S} N : \Mod^{\s}(\S) \to \C$ preserves all colimits of $\Mod^{\s}(\S)$ that are preserved by the inclusion $\Mod^{\s}(\S) \hookrightarrow \Mod(\S)$.
\end{prop}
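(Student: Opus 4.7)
The plan is to test both sides against an arbitrary object $T\in\C$ via the defining universal property of the tensor product given in \cref{tpex}, and then invoke the Yoneda Lemma. Let $D : I \to \Mod^{\s}(\S)$ be a diagram whose colimit $M \coloneqq \colim_i D(i)$ exists in $\Mod^{\s}(\S)$ and is preserved by the inclusion $\Mod^{\s}(\S)\hookrightarrow \Mod(\S)$, so that $M$ is simultaneously the colimit of $D$ in $\Mod(\S)$. Since $\C$ is cocomplete, the colimit $C \coloneqq \colim_i\bigl(N\otimes_{\S} D(i)\bigr)$ exists in $\C$; the goal is to construct a canonical isomorphism $N\otimes_{\S}M \cong C$ compatible with the colimit cocones.

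The key computation, for arbitrary $T\in\C$, is the chain
\begin{align*}
\Hom(N\otimes_{\S}M,\,T)
&\cong \Hom_{\Mod(\S)}\bigl(M,\,\Hom(N(-),T)\bigr) \\
&\cong {\lim}_i\,\Hom_{\Mod(\S)}\bigl(D(i),\,\Hom(N(-),T)\bigr) \\
&\cong {\lim}_i\,\Hom\bigl(N\otimes_{\S}D(i),\,T\bigr) \\
&\cong \Hom(C,T).
\end{align*}
The first and third isomorphisms are instances of \cref{tpex} (note that $\Hom(N(-),T)$ is an $\S$-model by \cref{modyon}, since $N$ is an $\S^{\op}$-model), the second uses that $M$ is a colimit of the $D(i)$ in $\Mod(\S)$ (this is exactly where the colimit preservation hypothesis is used), and the fourth is the universal property of the colimit $C$ in $\C$. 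All isomorphisms are natural in $T$, so the Yoneda Lemma supplies a canonical isomorphism $N\otimes_{\S}M\cong C$ in $\C$.

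The only point that requires a little care is verifying that this Yoneda isomorphism actually identifies the tautological cocone $\{N\otimes_{\S}D(i)\to N\otimes_{\S}M\}_i$ induced by functoriality (from \cref{tpex}) with the colimit cocone $\{N\otimes_{\S}D(i)\to C\}_i$; this is the main (but routine) obstacle. It follows by tracing identities through the chain above: each leg of the cocone into $N\otimes_{\S}M$ corresponds, under the first isomorphism applied with $T=N\otimes_{\S}M$, to the composite of the structure morphism $D(i)\to M$ with the unit-style map $M\to \Hom(N(-),N\otimes_{\S}M)$, and the analogous trace on the right-hand side produces the colimit inclusions $N\otimes_{\S}D(i)\to C$. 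Hence the induced isomorphism $N\otimes_{\S}M\cong C$ exhibits $N\otimes_{\S}M$ as the colimit of $i\mapsto N\otimes_{\S}D(i)$, proving the preservation claim.
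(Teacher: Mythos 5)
Your proposal is correct and follows essentially the same route as the paper: test $N\otimes_{\S}M$ against an arbitrary $T\in\C$ via the universal property from \cref{tpex}, use that the colimit is preserved in $\Mod(\S)$ so its universal property applies to the (possibly non-small) models $\Hom(N(-),T)$, and conclude by Yoneda. Your extra paragraph checking that the Yoneda isomorphism is compatible with the cocones is a welcome detail that the paper leaves implicit.
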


\begin{proof}
Even though $- \otimes_{\S} N$ does not have to be a left adjoint (since the models $\Hom(N(-),T)$ are not small in general), we can just recycle the well-known argument that left adjoints preserve colimits. Let $M = \colim_i M_i$ be a colimit in $\Mod^{\s}(\S)$ that is preserved by the inclusion $\Mod^{\s}(\S) \hookrightarrow \Mod(\S)$. In particular, we may apply its universal property to models of the form $\Hom(N(-),T)$, where $ T \in \C$. Therefore,
\begin{align*}
\Hom(M \otimes_{\S} N,T) & \cong \Hom\bigl(M,\Hom(N(-),T)\bigr) \\
& \cong {\lim}_i \Hom\bigl(M_i,\Hom(N(-),T)\bigr) \\
& \cong {\lim}_i \Hom(M_i \otimes_{\S} N,T),
\end{align*}
which shows $M \otimes_{\S} N \cong {\colim}_i M_i \otimes_{\S} N$.
\end{proof}

This motivates the following definition.

\begin{defi}
A functor $\Mod^{\s}(\S) \to \C$ is called \emph{weakly cocontinuous} if it preserves all colimits that are preserved by the inclusion $\Mod^{\s}(\S) \hookrightarrow \Mod(\S)$. These form a (not necessarily locally small) category $\Hom_{\wc}(\Mod^{\s}(\S),\C)$.
\end{defi}
 
We now describe the universal $\S^{\op}$-model.

\begin{prop}\label{comod}
Let $(\E,\S)$ be a realized limit sketch. Then we may corestrict the Yoneda embedding $\E^{\op} \to \Hom(\E,\Set)$, $A \mapsto \Hom(A,-)$ to a functor
\[Y : \E^{\op} \to \Mod^{\s}(\S),\]
which is, in fact, a model of $\S^{\op}$ with values in $\Mod^{\s}(\S)$, and likewise for $\Mod(\S)$. We call it the \emph{Yoneda model}.
\end{prop}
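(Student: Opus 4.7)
The plan is to verify the two claims separately: first that $Y$ is well-defined as a functor into $\Mod^{\s}(\S)$, and second that it sends the distinguished cones of $\S$ (read as cocones in $\E^{\op}$) to colimit cocones in $\Mod^{\s}(\S)$ and in $\Mod(\S)$.

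For the first point, fix $A \in \E$. Because $(\E,\S)$ is realized, $\Hom(A,-)$ is an $\S$-model by the very definition of realized in \cref{sketchdef}. It is trivially small: the one-object colimit decomposition $\Hom(A,-) \cong \Hom(A,-)$ exhibits it as a colimit of representable models inside $\Mod(\S)$, hence it lies in $\Mod^{\s}(\S)$ by \cref{smalldef}. Functoriality of $Y$ is just the contravariant functoriality of the Yoneda embedding.

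For the second point, let $\sigma = \{B \to B_i\}_{i \in I}$ be a distinguished cone in $\S$, so that $Y(\sigma)$ is the cocone $\{Y(B_i) \to Y(B)\}_{i \in I} = \{\Hom(B_i,-) \to \Hom(B,-)\}_{i \in I}$ in $\Mod^{\s}(\S)$. To test its universal property, let $N \in \Mod(\S)$ be arbitrary. Then the Yoneda Lemma together with the model property of $N$ applied to $\sigma$ gives
\[\Hom\bigl(\Hom(B,-),N\bigr) \cong N(B) \cong {\lim}_{i} N(B_i) \cong {\lim}_{i} \Hom\bigl(\Hom(B_i,-),N\bigr).\]
This is precisely the statement that $\Hom(B,-)$, together with the maps $Y(B \to B_i)$, is a colimit of the diagram $i \mapsto \Hom(B_i,-)$ in the (possibly non locally small) category $\Mod(\S)$. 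Since $\Mod^{\s}(\S) \subseteq \Mod(\S)$ is a full subcategory and the colimiting vertex $\Hom(B,-)$ as well as all diagram objects $\Hom(B_i,-)$ lie in $\Mod^{\s}(\S)$, the same cocone is automatically a colimit cocone in $\Mod^{\s}(\S)$. This simultaneously proves that $Y$ is an $\S^{\op}$-model valued in $\Mod^{\s}(\S)$ and, by the very same chain of isomorphisms tested against arbitrary $N \in \Mod(\S)$, that it is an $\S^{\op}$-model valued in $\Mod(\S)$.

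I do not anticipate any real obstacle here; the argument is essentially a double application of the Yoneda Lemma, conjugated with the definition of ``model''. The only point one has to be careful about is that colimits in $\Mod(\S)$ or $\Mod^{\s}(\S)$ are not a priori computed pointwise in $\Hom(\E,\Set)$, but this is irrelevant because we never need to construct the colimit: we characterize it directly via its universal property tested against all $\S$-models $N$, and the fact that $\Mod^{\s}(\S) \hookrightarrow \Mod(\S)$ is full lets us transfer the universal property between the two categories at no cost.
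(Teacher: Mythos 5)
Your proposal is correct and follows essentially the same route as the paper: realizedness gives that each $\Hom(A,-)$ is a (trivially small) model, and the colimit property of $Y$ applied to a distinguished cone is verified by testing against an arbitrary $\S$-model $N$ via the Yoneda Lemma, with the observation that the cocone is a colimit already in $\Mod(\S)$ and hence, by fullness, in $\Mod^{\s}(\S)$. The only cosmetic difference is that the paper phrases the Yoneda step as an identification of the whole cone $\{\Hom(Y(A),M) \to \Hom(Y(A_i),M)\}$ with $\{M(A) \to M(A_i)\}$, which makes the compatibility of your chain of isomorphisms with the cocone legs explicit.
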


\begin{proof}
Since $(\E,\S)$ is realized, $\Hom(A,-)$ is a model of $\S$, and it is small for trivial reasons, so that we get a functor $Y : \E^{\op} \to \Mod^{\s}(\S)$. Now, let $\{A \to A_i\}$ be a distinguished cone. We need to show that $\{Y(A_i) \to Y(A)\}$ is a colimit cocone in $\Mod^{\s}(\S)$. In fact, it is even a colimit cocone in $\Mod(\S)$. For this we need to check that for every $\S$-model $M$ the cone $\{\Hom(Y(A),M) \to \Hom(Y(A_i),M)\}$ is a limit cone in $\Set$. But the Yoneda Lemma identifies this cone with $\{M(A) \to M(A_i)\}$, which is a limit cone by the very definition of a model.
\end{proof}

We can now prove our main theorem.

\begin{thm}\label{maintechnical}
Let $(\E,\S)$ be a realized limit sketch. Then, for any cocomplete category $\C$ there is a natural equivalence
\begin{align*}
\Hom_{\wc}(\Mod^{\s}(\S),\C) & ~\simeq~ \Mod_{\C}(\S^{\op}),\\
F & ~\mapsto~ F \circ Y, \\
N \otimes_{\S} - & ~\mapsfrom~ N.
\end{align*}
\end{thm}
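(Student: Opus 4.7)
The plan is to verify that $F \mapsto F \circ Y$ and $N \mapsto N \otimes_{\S} (-)$ are mutually quasi-inverse. Well-definedness of the second functor is already established: \cref{tpex} gives the functor $N \otimes_{\S} (-) : \Mod^{\s}(\S) \to \C$, and \cref{tpco} shows it is weakly cocontinuous. For the first functor, \cref{comod} tells us that $Y : \E^{\op} \to \Mod^{\s}(\S)$ is an $\S^{\op}$-model whose distinguished cocones are colimit cocones already in $\Mod(\S)$, hence preserved by the inclusion $\Mod^{\s}(\S) \hookrightarrow \Mod(\S)$; any weakly cocontinuous $F$ then preserves them, so $F \circ Y$ is a $\C$-valued $\S^{\op}$-model.

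The composition $N \mapsto (N \otimes_{\S} -) \circ Y$ is isomorphic to the identity via the isomorphism $N \otimes_{\S} \Hom(A,-) \cong N(A)$ provided by \cref{tpex}, natural in $A$ and $N$.

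For the other composition, let $F \in \Hom_{\wc}(\Mod^{\s}(\S),\C)$ and set $N \coloneqq F \circ Y$. I would construct a comparison morphism $\eta_M : N \otimes_{\S} M \to F(M)$ as follows: by the Yoneda lemma, an element of $M(A)$ corresponds to a morphism $Y(A) \to M$ in $\Mod^{\s}(\S)$, and applying $F$ yields a morphism $N(A) = F(Y(A)) \to F(M)$. Naturality in $A$ produces a natural transformation $M \to \Hom(N(-), F(M))$ of functors $\E \to \Set$, which via the universal property of the tensor product (\cref{tp}) corresponds to $\eta_M$. The construction is natural in $M$ by design.

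It remains to show that $\eta_M$ is an isomorphism. For representable $M = Y(B)$, unravelling the Yoneda lemma identifies $\eta_{Y(B)}$ with the canonical isomorphism $N(B) \cong F(Y(B))$. For a general small $M$, \cref{smalldef} provides a colimit presentation $M \cong \colim_i Y(A_i)$ in $\Mod(\S)$; since $\Mod^{\s}(\S) \subseteq \Mod(\S)$ is full and contains $M$ together with all $Y(A_i)$, this is also a colimit in $\Mod^{\s}(\S)$ that is preserved by the inclusion. Hence both $N \otimes_{\S} -$ and $F$ preserve it, and naturality of $\eta$ propagates the isomorphism on representables to $\eta_M$. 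The main obstacle is precisely this last step: because $\Mod^{\s}(\S)$ need not be cocomplete, one cannot simply invoke the preservation of all colimits, but must carefully exploit the weaker notion of weak cocontinuity together with the specific colimit presentation that witnesses smallness of $M$.
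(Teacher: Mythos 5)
Your proposal is correct and follows essentially the same route as the paper's proof: the same comparison morphism $N \otimes_{\S} M \to F(M)$ built via Yoneda and the universal property of the tensor product, checked on representables and then propagated along the defining colimit presentation of a small model using weak cocontinuity of both sides. Your remark that the colimit presentation in $\Mod(\S)$ is automatically a colimit in the full subcategory $\Mod^{\s}(\S)$ preserved by the inclusion makes explicit a point the paper states more tersely, but the argument is the same.
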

 
\begin{proof}
Let $F : \Mod^{\s}(\S) \to \C$ be a weakly cocontinuous functor. By \cref{comod} the Yoneda model $Y : \E^{\op} \to \Mod^{\s}(\S)$ maps all distinguished cones to colimit cocones that are preserved by $\Mod^{\s}(\S) \hookrightarrow \Mod(\S)$. Thus, $F \circ Y : \E^{\op} \to \C$ is a $\C$-valued $\S^{\op}$-model, and it is clear that any morphism $F \to F'$ induces a morphism $F \circ Y \to F' \circ Y$. Conversely, if $N : \E^{\op} \to \C$ is a $\C$-valued $\S^{\op}$-model, then by \cref{tpco} the functor $N \otimes_{\S} - : \Mod^{\s}(\S) \to \C$ is weakly cocontinuous, and the universal property of tensor products shows that every morphism $N \to N'$ induces a morphism $N \otimes_{\S} - \to N' \otimes_{\S} -$. The natural isomorphisms $N \otimes_{\S} Y(A) \cong N(A)$ from \cref{tpex} show $(N \otimes_{\S} -) \circ Y \cong N$ for any $\S^{\op}$-model $N$. Conversely, if $F : \Mod^{\s}(\S) \to \C$ is a weakly cocontinuous functor, we will construct a morphism
\[\alpha: (F \circ Y) \otimes_{\S} - \to F\]
as follows: For $ M \in \Mod^{\s}(\S)$ the morphism $\alpha_M : (F \circ Y) \otimes_{\S} M \to F(M)$ corresponds to the morphism $M \to \Hom\bigl(F(Y(-)),F(M)\bigr)$ of models whose $A$-component (for $A \in \E$) is defined by
\[M(A) \cong \Hom(Y(A),M) \xrightarrow{F} \Hom\bigl(F(Y(A)),F(M)\bigr).\]
If $M = Y(A)$ is representable, then $\alpha_M$ is an isomorphism since it is equal to the isomorphism $(F \circ Y) \otimes_{\S} Y(A) \cong F(Y(A))$ from \cref{tpex}. If $M$ is an arbitrary small model, then $M$ is a colimit of representable models in $\Mod^{\s}(\S)$ and $\Mod(\S)$. Since both sides of $\alpha$ are weakly cocontinuous and thus preserve this colimit, it follows that $\alpha_M$ is an isomorphism as well.
\end{proof}

We will mostly use the following less technical special case, which actually follows from Isbell's \cite[Theorem 3.4]{isbell2} as mentioned in the introduction.

\begin{thm}\label{main}
Let $(\E,\S)$ be a realized limit sketch. Assume that every $\S$-model is small. Then, for any cocomplete category $\C$ there is a natural equivalence
\begin{align*}
\Hom_{\c}(\Mod(\S),\C) & ~\simeq~ \Mod_{\C}(\S^{\op}),\\
F & ~\mapsto~ F \circ Y, \\
N \otimes_{\S} - & ~\mapsfrom~ N.
\end{align*}
Thus, if $\Mod(\S)$ is cocomplete, $\Mod(\S)$ is the universal example of a cocomplete category with a model of $\S^{\op}$, namely the Yoneda model $Y : \E^{\op} \to \Mod(\S)$.
\end{thm}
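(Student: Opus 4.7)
The plan is to derive Theorem \ref{main} directly from Theorem \ref{maintechnical} by reducing the technical "weakly cocontinuous, small-model" formulation to the cleaner "cocontinuous, all-model" one. Since \ref{maintechnical} already establishes the equivalence $\Hom_{\wc}(\Mod^{\s}(\S),\C) \simeq \Mod_{\C}(\S^{\op})$ under exactly the same formulas $F \mapsto F \circ Y$ and $N \mapsto N \otimes_{\S} -$, all that remains is to identify each side of that equivalence with what appears in the present statement, using the extra hypothesis that every $\S$-model is small.

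First I would observe that by Definition \ref{smalldef} and our hypothesis, $\Mod^{\s}(\S) = \Mod(\S)$, so that the inclusion $\Mod^{\s}(\S) \hookrightarrow \Mod(\S)$ figuring in the definition of weak cocontinuity is (up to equality) the identity functor. In particular every small colimit in $\Mod^{\s}(\S)$ is trivially preserved by this inclusion, hence the clause "preserves all colimits which are preserved by the inclusion" in the definition of weakly cocontinuous functor reduces to "preserves all small colimits". Therefore $\Hom_{\wc}(\Mod^{\s}(\S),\C) = \Hom_{\c}(\Mod(\S),\C)$, and substituting this identification on the left side of \ref{maintechnical} yields the stated equivalence with the stated formulas.

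Finally, for the universal-property phrasing in the last sentence, I would simply unpack the equivalence under the additional assumption that $\Mod(\S)$ is cocomplete. By Proposition \ref{comod} the Yoneda model $Y : \E^{\op} \to \Mod(\S)$ is itself an $\S^{\op}$-model, and the inverse equivalence $N \mapsto N \otimes_{\S} -$ of \ref{maintechnical} sends $Y$ to (an isomorph of) the identity functor on $\Mod(\S)$, since $Y(A) \otimes_{\S} M \cong M(A)$ by Proposition \ref{tpex} applied with the roles of the two models swapped — equivalently, the counit of the equivalence applied at $Y$ gives an isomorphism. Thus $Y$ is the generic $\S^{\op}$-model, as claimed.

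I do not expect any genuine obstacle. The substantive work — existence of the tensor product, weak cocontinuity of $N \otimes_{\S} -$, the fact that $Y$ is an $\S^{\op}$-model, and the bijection on Hom-sets — has all been established in Propositions \ref{tpex}, \ref{tpco}, and \ref{comod} and assembled into Theorem \ref{maintechnical}. The only potentially delicate point is the collapse of weak cocontinuity to cocontinuity, and this is immediate once one notes that the inclusion $\Mod^{\s}(\S) \hookrightarrow \Mod(\S)$ is the identity under our hypothesis.
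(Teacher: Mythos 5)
Your proposal is correct and follows essentially the same route as the paper: the paper's proof of this theorem is exactly the observation that the hypothesis forces $\Mod^{\s}(\S)=\Mod(\S)$, so weakly cocontinuous functors coincide with cocontinuous ones and \cref{maintechnical} applies verbatim. Your additional unpacking of the universal-property sentence is a harmless elaboration of what the paper leaves implicit.
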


\begin{proof}
The assumption $\Mod(\S)=\Mod^{\s}(\S)$ implies that weakly cocontinuous functors coincide with cocontinuous functors. So we are done by \cref{maintechnical}.
\end{proof}

\begin{cor} \label{modiscompact}
A category is strongly compact if and only if it is modeled by a realized limit sketch such that every model is small.
\end{cor}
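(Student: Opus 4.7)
My plan is to treat the two implications separately, each as a short application of the machinery already at hand. The forward direction will fall out of \cref{isbelllimit}, while the backward direction will hinge on \cref{main} together with an explicit description of a right adjoint.

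For the forward direction, suppose $\A$ is strongly compact. \cref{isbelllimit} already supplies a realized limit sketch $\S$ with $\A \simeq \Mod(\S)$ such that \emph{every} $\S$-model is representable. But any representable model $\Hom(A,-)$ is trivially small: it is the colimit of itself, indexed by the one-object category, which is a colimit of representable models inside $\Mod(\S)$. Hence every model is small, and this direction is complete.

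For the backward direction, assume $\A \simeq \Mod(\S)$ for a realized limit sketch $\S$ whose models are all small. I will verify condition (2) of \cref{isbellchar}: every cocontinuous functor $H : \A \to \B$ into a cocomplete category $\B$ admits a right adjoint. By \cref{main} such an $H$ is isomorphic to $N \otimes_{\S} -$ for some $\B$-valued $\S^{\op}$-model $N : \E^{\op} \to \B$. I would then define the candidate right adjoint $R : \B \to \A$ by $R(T) \coloneqq \Hom_{\B}(N(-),T)$; this really lands in $\Mod(\S)$ by \cref{modyon}, precisely because $N$ is an $\S^{\op}$-model. The adjunction
\[\Hom_{\B}(H(M),T) = \Hom_{\B}(N \otimes_{\S} M, T) \cong \Hom\bigl(M,\Hom_{\B}(N(-),T)\bigr) = \Hom_{\A}(M,R(T))\]
is then nothing but the defining universal property of the tensor product from \cref{tp}, restricted along the full inclusion $\Mod(\S) \subseteq \Hom(\E,\Set)$; naturality in $M$ and $T$ is automatic from the Yoneda-style construction in \cref{tpex}.

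I do not foresee any genuine obstacle, since \cref{main} is doing essentially all the heavy lifting. The one step that warrants a moment's care is checking that $R$ actually takes values in $\S$-models rather than arbitrary functors $\E \to \Set$; this is the precise dual statement to $N$ being an $\S^{\op}$-model and is handled verbatim by \cref{modyon}.
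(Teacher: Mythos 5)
Your proof is correct and follows the same route as the paper: the forward direction via \cref{isbelllimit} (with the easy observation that representable models are small), and the backward direction via \cref{isbellchar} and \cref{main}, exhibiting the right adjoint of $N \otimes_{\S} -$ as $T \mapsto \Hom(N(-),T)$. The extra details you supply (that $R$ lands in $\Mod(\S)$ by \cref{modyon}, and that the adjunction is just the defining property of the tensor product) are exactly what the paper leaves implicit.
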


\begin{proof}
The direction $\Rightarrow$ follows from \cref{isbelllimit}. To prove $\Leftarrow$ let $\S$ be a realized limit sketch such that every $\S$-model is small. By \cref{isbellchar} it is enough to prove that each cocontinuous functor $\Mod(\S) \to \C$ into a cocomplete category $\C$ is a left adjoint. By \cref{main} it has the form $N \otimes_{\S} -$ for some $N \in \Mod_{\C}(\S^{\op})$, which is left adjoint to $T \mapsto \Hom(N(-),T)$.
\end{proof}

\begin{cor} \label{kov}
Let $\E$ be a category. Then for any cocomplete category $\C$ there is a natural equivalence $\Hom_{\c}(\Hom^{\s}(\E,\Set),\C) \simeq \Hom(\E^{\op},\C)$. Thus, $\Hom^{\s}(\E,\Set)$ is the universal cocompletion of $\E^{\op}$.
\end{cor}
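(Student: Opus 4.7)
The plan is to apply \cref{maintechnical} to the empty limit sketch on $\E$, i.e.\ the realized limit sketch $(\E,\emptyset)$. This sketch is vacuously realized, so the hypothesis is satisfied. Under this choice, every functor $\E \to \Set$ is a model and every functor $\E^{\op} \to \C$ is a model of the dual colimit sketch, so $\Mod(\emptyset) = \Hom(\E,\Set)$ and $\Mod_{\C}(\emptyset^{\op}) = \Hom(\E^{\op},\C)$.

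Next I would identify $\Mod^{\s}(\emptyset)$ with $\Hom^{\s}(\E,\Set)$. Unwinding \cref{smalldef}, a small model of the empty sketch is a functor which is a colimit of representable functors computed inside $\Hom(\E,\Set) = \Mod(\emptyset)$, and this is exactly the definition of a small functor used in \cref{small}. So the two categories literally agree.

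Now \cref{maintechnical} produces an equivalence
\[\Hom_{\wc}(\Hom^{\s}(\E,\Set),\C) \simeq \Hom(\E^{\op},\C),\]
and the only thing left is to upgrade $\Hom_{\wc}$ to $\Hom_{\c}$. For this I invoke the second half of \cref{small}: $\Hom^{\s}(\E,\Set)$ is closed under colimits in $\Hom(\E,\Set)$, i.e.\ the inclusion $\Mod^{\s}(\emptyset) \hookrightarrow \Mod(\emptyset)$ preserves all colimits. Therefore every colimit in $\Hom^{\s}(\E,\Set)$ is preserved by the inclusion into $\Mod(\emptyset)$, which means that weakly cocontinuous functors out of $\Hom^{\s}(\E,\Set)$ are the same as cocontinuous functors. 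Substituting this identification into the equivalence above yields the claim.

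There is no real obstacle here; the result is essentially a bookkeeping exercise once \cref{maintechnical} is in place. The only point that requires a moment of care is the matching of the two notions of smallness (functorial vs.\ modelwise) and the corresponding identification of weak cocontinuity with cocontinuity via the closure property from \cref{small}. It is also worth noting that this recovers \cite[Proposition 5.35]{kelly} already mentioned before \cref{smalldef}, so the corollary may be viewed as a sanity check that our theory specializes correctly to the known free cocompletion.
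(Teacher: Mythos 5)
Your proof is correct and is exactly the paper's argument: the paper's own proof reads ``this is just the special case $\S=\emptyset$ of \cref{maintechnical}, using \cref{small},'' and your write-up simply spells out the identifications (empty sketch is vacuously realized, small models of $\emptyset$ are small functors, and closure under colimits from \cref{small} turns weak cocontinuity into cocontinuity) that the paper leaves implicit.
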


\begin{proof}
This is just the special case $\S=\emptyset$ of \cref{maintechnical}, using \cref{small}.
\end{proof}

We can also deduce the following generalized Eilenberg-Watts Theorem.

\begin{cor} \label{eilenbergmain}
Let $\S$ be a realized limit sketch such every $\S$-model is small. Let $\T$ be another limit sketch such that $\Mod(\T)$ is locally small and cocomplete. Then we have
\[\Hom_{\c}(\Mod(\S),\Mod(\T)) \simeq \Mod(\S^{\op} \otimes \T).\]
\end{cor}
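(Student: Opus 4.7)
The plan is to apply \cref{main} with target category $\C = \Mod(\T)$, which is admissible since $\Mod(\T)$ is assumed locally small and cocomplete. This immediately gives
\[\Hom_{\c}(\Mod(\S),\Mod(\T)) \;\simeq\; \Mod_{\Mod(\T)}(\S^{\op}),\]
and the whole matter reduces to recognising the right-hand side as $\Mod(\S^{\op}\otimes\T)$.

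For the remaining identification I would mimic the currying argument of \cref{tensorsketch}, now adapted to the present mixed (cone plus cocone) setting. A model of the mixed sketch $\S^{\op}\otimes\T$ on $\E^{\op}\times\F$ is a functor $\hat N\colon\E^{\op}\times\F\to\Set$ whose slice $\hat N(A,-)$ is a $\T$-model for every $A\in\E^{\op}$ and whose slice $\hat N(-,B)$ is an $\S^{\op}$-model for every $B\in\F$. Under the standard isomorphism $\Hom(\E^{\op}\times\F,\Set)\simeq\Hom(\E^{\op},\Hom(\F,\Set))$ the first condition is precisely that the curried functor $N\colon\E^{\op}\to\Hom(\F,\Set)$ factors through $\Mod(\T)$; the second condition can be translated, via \cref{modyon} applied to the colimit sketch $\S^{\op}$ with target $\Mod(\T)$, into the statement that $N$ is an $\S^{\op}$-model with values in $\Mod(\T)$, i.e., an object of $\Mod_{\Mod(\T)}(\S^{\op})$. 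Stringing the two equivalences together finishes the proof.

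The main obstacle will be the second step of this currying equivalence, because colimits in $\Mod(\T)$ for a limit sketch $\T$ are not in general pointwise; hence it is not automatic that the ``pointwise $\S^{\op}$-model'' condition on the uncurried $\hat N$ matches the ``$\Mod(\T)$-internal $\S^{\op}$-model'' condition on $N$. The bridge is to use \cref{modyon} to characterise a colimit cocone in $\Mod(\T)$ as a cocone which is turned into a limit cone by $\Hom(-,T)$ for every $T\in\Mod(\T)$, and then invoke \cref{limitsofmodels} (limits in $\Mod(\T)$ are computed pointwise) to reduce the resulting condition to a pointwise statement in $\Set$. Once this mixed-sketch version of \cref{tensorsketch} is set up, \cref{eilenbergmain} follows at once.
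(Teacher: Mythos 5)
Your first step is exactly the paper's: apply \cref{main} with $\C=\Mod(\T)$ (legitimate, since $\Mod(\T)$ is assumed locally small and cocomplete) to get $\Hom_{\c}(\Mod(\S),\Mod(\T))\simeq\Mod_{\Mod(\T)}(\S^{\op})$. The paper then simply cites the currying equivalence of \cref{tensorsketch} to rewrite the right-hand side as $\Mod(\S^{\op}\otimes\T)$, and this is where your proposal diverges: you try to actually prove that rewriting for the ``pointwise'' mixed sketch on $\E^{\op}\times\F$, and you correctly isolate the obstacle (colimits in $\Mod(\T)$ are not pointwise). But the bridge you propose does not cross it. Writing a distinguished cocone of $\S^{\op}$ as $\{A_i\to A\}$, membership in $\Mod_{\Mod(\T)}(\S^{\op})$ asks that $\Hom_{\Mod(\T)}(N(A),T)\to\lim_i\Hom_{\Mod(\T)}(N(A_i),T)$ be bijective for all $T\in\Mod(\T)$, whereas the slice condition on $\hat N$ asks that $N(A_i)(B)\to N(A)(B)$ be a colimit cocone in $\Set$ for all $B\in\F$. \cref{limitsofmodels} concerns limits of diagrams \emph{in} $\Mod(\T)$, not these hom-sets, so it provides no passage from the first condition to the second.

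In fact no such passage exists: under your reading of $\S^{\op}\otimes\T$ the identification is false. Take $\S=\T=$ the theory of groups. A cocontinuous endofunctor of $\Grp$ corresponds under \cref{main} to a cogroup $N$ in $\Grp$, so $N(X^2)\cong N(X)\sqcup N(X)$ is a free product, whose underlying set is not the disjoint union of two copies of the underlying set of $N(X)$; hence the slices $\hat N(-,B)$ are not cogroups in $\Set$. Worse, a cogroup in $\Set$ must be empty (its counit maps it to $\emptyset$), while $\hat N(X,-)$ being a group forces $\hat N(X,Y^0)$ to be a one-point set, so the pointwise mixed sketch on $\E^{\op}\times\F$ has \emph{no} models at all, although $\Hom_{\c}(\Grp,\Grp)\simeq\Set_*$ by \cref{grpue}. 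The corollary is only correct if $\Mod(\S^{\op}\otimes\T)$ is read the way the paper implicitly reads it, namely as $\Mod_{\Mod(\T)}(\S^{\op})$, with the currying equivalence of \cref{tensorsketch} taken as the \emph{definition} of the tensor product's models in this mixed (colimit sketch)$\,\otimes\,$(limit sketch) case; under that reading your first step already finishes the proof and the second step should be deleted rather than repaired.
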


\begin{proof}
This follows from \cref{tensorsketch} and \cref{main}.
\end{proof}

\begin{cor} \label{lfpue}
Let $\S$ be a small realized limit sketch. Then $\Mod(\S)$ is cocomplete, and for any cocomplete category $\C$ there is a natural equivalence
\[\Hom_{\c}(\Mod(\S),\C) \simeq \Mod_{\C}(\S^{\op}).\]
\end{cor}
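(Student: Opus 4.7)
The plan is to reduce the claim to Theorem \ref{main}, so I need to verify its two hypotheses for a small realized limit sketch: that $\Mod(\S)$ is cocomplete, and that every $\S$-model is small in the sense of Definition \ref{smalldef}. Once both are checked, the statement of Theorem \ref{main} applies verbatim.

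Cocompleteness is essentially for free: since $\S$ is small, Remark \ref{smallsketch} tells us that $\Mod(\S)$ is locally small and is a reflective subcategory of $\Hom(\E,\Set)$, with reflector $L : \Hom(\E,\Set) \to \Mod(\S)$. Because $\Hom(\E,\Set)$ is cocomplete (colimits computed pointwise), its reflective subcategory $\Mod(\S)$ is cocomplete as well, colimits being obtained by applying $L$ to the pointwise colimit.

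The main step — though not really an obstacle — is showing that every $\S$-model is small. Since $\E$ is small, density of representables in $\Hom(\E,\Set)$ expresses any functor $M : \E \to \Set$ as the canonical colimit
\[
M ~\cong~ \colim_{(A,m)} \Hom(A,-)
\]
in $\Hom(\E,\Set)$, indexed by its (small) category of elements. Assume now that $M$ is an $\S$-model and apply $L$. As a left adjoint, $L$ preserves this colimit; and because $(\E,\S)$ is realized, every representable $\Hom(A,-)$ is already in $\Mod(\S)$ and so is fixed by $L$. The same colimit formula therefore holds inside $\Mod(\S)$, exhibiting $M$ as a colimit of representable models and hence as a small model in the sense of Definition \ref{smalldef}. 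With both hypotheses of Theorem \ref{main} verified, the desired equivalence $\Hom_{\c}(\Mod(\S),\C) \simeq \Mod_{\C}(\S^{\op})$ follows immediately.
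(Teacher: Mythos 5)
Your proof is correct and follows the same route as the paper: reduce to \cref{main} after establishing cocompleteness via \cref{smallsketch} and that every $\S$-model is small. The paper simply asserts $\Mod(\S)=\Mod^{\s}(\S)$ without elaboration, and your density-plus-reflector argument is a valid way to fill in that step (alternatively, \cref{smallchar} applied with $\E'=\E$ yields it immediately).
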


\begin{proof}
Since $\S$ is small, we have $\Mod(\S)=\Mod^{\s}(\S)$, and this category is cocomplete by \cref{smallsketch}. Now use \cref{main}.
\end{proof}

\begin{rem}
Actually, in the small case one does not need the assumption that the limit sketch $\S$ is realized. This is because one can compose the Yoneda embedding with the reflector $\Hom(\E,\Set) \to \Mod(\S)$ from \cref{smallsketch}. Alternatively, we may argue with \cref{real}. \cref{lfpue} was first proven by Pultr \cite[Theorem 2.5]{pultr} (where small realized limit sketches are called relational theories). We refer to \cite[Theorem 2.2.4]{chirvasitu} for a more modern and readable exposition. What makes \cref{lfpue} remarkable is that it gives a ($2$-categorical) universal property of each locally presentable category, since every such category is modeled by a small realized limit sketch $\S$ \cite[Corollary 1.52]{adamekrosicky}. It is then the universal example of a cocomplete category with an internal model of $\S^{\op}$, which can also be seen as an internal \emph{comodel of $\S$}. The tricky part is to find a sketch that is sufficiently small.
\end{rem}

We now specialize our theorem to infinitary Lawvere theories, whose basic theory we will recall in \cref{sec:lawapp}; see \cref{lawdef} in particular.

\begin{prop}\label{lawsmall}
Let $\L$ be an infinitary Lawvere theory. Then every $\L$-model is small.
\end{prop}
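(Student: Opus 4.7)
The plan is to invoke Lemma \ref{smallchar}. Fix an $\L$-model $M$, let $X$ be the distinguished generator of $\L$ (so every object of $\L$ has the form $X^\kappa$ for a small cardinal $\kappa$), and set $\lambda = |M(X)|$. I will take $\E'$ to be the full subcategory of $\L$ on the objects $\{X^\kappa : \kappa \leq \lambda\}$. Since $\L$ is locally small and there are only set-many cardinals $\leq \lambda$, $\E'$ is small; and by Lemma \ref{smallchar} it suffices to show that for every $\L$-model $N$, restriction along $\E' \hookrightarrow \L$ gives a bijection $\Hom(M,N) \to \Hom(M|_{\E'}, N|_{\E'})$.

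Injectivity is formal: any morphism $\alpha : M \to N$ is determined by $\alpha_X$, because naturality of $\alpha$ at the product projections combined with product preservation of $M$ and $N$ forces $\alpha_{X^\kappa} = (\alpha_X)^\kappa$ for all $\kappa$. For surjectivity, given $\alpha' : M|_{\E'} \to N|_{\E'}$, set $f = \alpha'_X$ and define a candidate extension by $\tilde\alpha_{X^\kappa} = f^\kappa$ for every small $\kappa$. Projecting to individual coordinates reduces naturality with respect to an arbitrary morphism $X^\kappa \to X^\mu$ in $\L$ to the single-target case: I must show $f \circ M(\omega) = N(\omega) \circ f^\kappa$ for every operation $\omega : X^\kappa \to X$, including those of arity $\kappa > \lambda$ that escape $\E'$.

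The crux is a factor-through-image argument. Given $a \in M(X^\kappa) \cong M(X)^\kappa$, let $I \subseteq M(X)$ be its image, $q : \kappa \twoheadrightarrow I$ the induced surjection, and $b \in M(X)^I$ the inclusion of $I$. The map $q$ yields a reindexing morphism $q^* : X^I \to X^\kappa$ in $\L$ (obtained by the universal property of $X^\kappa$ as a product, using the $q(j)$-th projection of $X^I$ as the $j$-th coordinate), and $a = M(q^*)(b)$. Consequently $M(\omega)(a) = M(\omega \circ q^*)(b)$, and since $|I| \leq \lambda$ the composite $\omega \circ q^* : X^I \to X$ lives entirely in $\E'$. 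Applying naturality of $\alpha'$ first along $\omega \circ q^*$ and then along $q^*$ yields $f(M(\omega)(a)) = N(\omega \circ q^*)(f^I(b)) = N(\omega)(f^\kappa(a))$, as required.

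The main obstacle is really just packaging the reindexing data correctly: one must verify that arbitrary functions $q : \kappa \to I$ genuinely induce morphisms $q^*$ in $\L$ (which they do, by the universal property of $X^\kappa$ as a product), and that $a = M(q^*)(b)$ is exactly the image factorization of $a$ expressed through the action of $M$. Once these bookkeeping points are settled, all remaining verifications are routine naturality, and Lemma \ref{smallchar} concludes that $M$ is a small model.
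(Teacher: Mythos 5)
Your argument is correct, but it takes a genuinely different route from the paper. The paper's proof is a two-liner: by \cref{ismonadic} the category $\Mod(\L)$ is monadic over $\Set$ with representable models corresponding to free algebras, and every algebra over a monad is a reflexive coequalizer of a pair of morphisms between free algebras; hence every model is a (very small) colimit of representables. Your proof instead stays entirely inside the sketch-theoretic framework and applies \cref{smallchar} with a cardinality bound on the arities that need to be tested: the factor-through-image trick $a = M(q^*)(b)$ with $|I| \leq |M(X)|$ is exactly the right observation, and it is in fact the same strategy the paper uses later for the topological sketch in \cref{topsmall}. What the paper's route buys is brevity and a sharper colimit presentation (a single coequalizer of representables rather than the colimit over the category of elements of $M|_{\E'}$ that \cref{smallchar} produces); what your route buys is independence from the monad correspondence of the appendix and from the classical presentation of algebras by free algebras. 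Two small bookkeeping points you should tidy up: objects of $\L$ are only required to be \emph{isomorphic} to powers $X^I$, so $\E'$ should be taken as the full subcategory on a chosen set of representatives $X^\kappa$, with components at other objects obtained by transport along isomorphisms; and you should arrange that $X$ itself lies in $\E'$ (e.g.\ take $\kappa \leq \max(\lambda,1)$), since for the degenerate case $M(X)=\emptyset$ your bound $\kappa \leq \lambda$ would exclude it. Neither affects the substance of the argument.
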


\begin{proof}
By \cref{ismonadic} there is an equivalence $\Mod(\L) \simeq \Mod(\T)$ for a monad $\T$ on $\Set$ that maps representable $\L$-models onto free $\T$-algebras. Now we can use the well-known fact that every $\T$-algebra is a (reflexive) coequalizer of a pair of homomorphisms between free $\T$-algebras \cite[Proposition 3.3.7]{barrwells}.
\end{proof}

\begin{thm} \label{lawue}
Let $\L$ be an infinitary Lawvere theory. Then $\Mod(\L)$ is cocomplete, and for every cocomplete category $\C$ we have
\[\Hom_{\c}(\Mod(\L),\C) \simeq \Mod_{\C}(\L^{\op}).\]
\end{thm}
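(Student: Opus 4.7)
The plan is to view the infinitary Lawvere theory $\L$ as a realized limit sketch and then apply \cref{main}. Concretely, I would take the sketch $(\L, \S_\L)$ whose distinguished cones are the product cones in $\L$ that $\L$-models are required to preserve. These products exist in $\L$ by the very definition of an infinitary Lawvere theory, and they are genuine product cones, hence limit cones in $\L$, so the sketch is realized in the sense of \cref{sketchdef}. Moreover, an $\S_\L$-model is by construction exactly a model of the Lawvere theory $\L$, so $\Mod(\S_\L)$ coincides with $\Mod(\L)$, and similarly for $\C$-valued models.

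By \cref{lawsmall}, every $\L$-model is small, which verifies the main hypothesis of \cref{main}. It remains to check that $\Mod(\L)$ is cocomplete. For this I would invoke the monadic equivalence $\Mod(\L) \simeq \Mod(\T)$ already used in the proof of \cref{lawsmall}, together with the classical fact that the category of algebras for a monad on $\Set$ is always cocomplete: given a small diagram of $\T$-algebras, one writes each of them as a reflexive coequalizer of free $\T$-algebras (as cited in the proof of \cref{lawsmall}), forms the corresponding colimit of free algebras (which exists because the free functor is left adjoint and so transports colimits from $\Set$), and then coequalizes.

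With both hypotheses of \cref{main} verified, that theorem immediately yields the natural equivalence
\[\Hom_{\c}(\Mod(\L),\C) \simeq \Mod_{\C}(\L^{\op})\]
for every cocomplete $\C$, and identifies the universal $\L^{\op}$-model as the Yoneda model $Y : \L^{\op} \to \Mod(\L)$. The only real obstacle is the cocompleteness of $\Mod(\L)$: \cref{smallsketch} warns that cocompleteness can fail for general large limit sketches, but here it is rescued by the monadic presentation on $\Set$, so the remainder of the argument is a direct specialization of \cref{main}.
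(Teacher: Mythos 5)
Your proposal follows essentially the same route as the paper: interpret $\L$ as a realized limit sketch of product cones, invoke \cref{lawsmall} for smallness of all models, obtain cocompleteness of $\Mod(\L)$ from the monadic presentation $\Mod(\L) \simeq \Alg(\T)$ of \cref{ismonadic}, and then apply \cref{main}. The paper does exactly this, citing Linton's classical theorem that every monadic category over $\Set$ is cocomplete.

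One caveat: the parenthetical argument you give for that cocompleteness is circular as written. Writing each algebra as a reflexive coequalizer of free algebras and forming the colimit of the free parts is fine (the free functor, being a left adjoint, transports coproducts and colimits of free algebras from $\Set$), but the final step ``and then coequalizes'' presupposes that $\Alg(\T)$ has (reflexive) coequalizers --- and that existence is precisely the nontrivial content of Linton's theorem for a possibly infinitary monad on $\Set$. The standard justification is separate: the coequalizer of a pair $f,g : A \to B$ is $B/{\sim}$ for the smallest $\T$-congruence $\sim$ containing all pairs $(f(a),g(a))$, which exists because congruences on $B$ form a complete lattice closed under intersection. If you simply cite Linton, as the paper does, the proof is complete; if you intend your sketch to replace the citation, you need to supply the existence of coequalizers independently.
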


\begin{proof}
A classical result by Linton \cite{linton3} says that every monadic category over $\Set$ is cocomplete, which proves the first statement via \cref{ismonadic}. The rest follows from \cref{lawsmall} and \cref{main}.
\end{proof}

\begin{rem}
\cref{lawue} can be applied to finitary Lawvere theories as well because of \cref{finitary}. This special case is well-known and appears for example in \cite[Theorem 13]{poinsot}. The case that $\C$ is also modeled by a finitary Lawvere theory goes back to Freyd \cite{freyd}.
\end{rem}

\begin{defi}\label{internalalgebra}
Let $\T=(T,\eta,\mu)$ be a monad on $\Set$. A $\T$-algebra structure on a set $X$ is a map $T(X) \to X$ satisfying two axioms. It is easy to check \cite[Proposition 3.14]{manes2} that this is equivalent to a morphism of monads $\T \to \Hom(X^{-},X)$, where $\Hom(X^{-},X)$ is the double dualization monad of $X$ mapping a set $I$ to $\Hom(X^I,X)$. We can thus define $\T$-algebras internal to any category $\C$ with products and obtain a category $\Alg_{\C}(\T)$. Another way of defining $\Alg_{\C}(\T)$ is by combining the equivalence between monads and infinitary Lawvere (see \cref{sec:lawapp}) and using \cref{internalize}. Specifically, a $\T$-algebra internal to $\C$ is a product-preserving functor $\K^{\op} \to \C$, where $\K$ is the Kleisli category of $\T$. The two notions of internal $\T$-algebra agree by \cref{ismonadic}. By dualization, for every category with coproducts $\C$ we have a category of internal $\T$-\emph{coalgebras}
\[\CoAlg_{\C}(\T) \coloneqq \Alg_{\C^{\op}}(\T)^{\op},\]
where a $\T$-coalgebra structure on $X \in \C$ is a morphism of monads $\T \to \Hom(X,(-) \otimes X)$. 
\end{defi}

\begin{thm} \label{monue}
Let $\T$ be a monad on $\Set$. Then $\Alg(\T)$ is cocomplete, and for any cocomplete category $\C$ we have an equivalence
\[\Hom_{\c}(\Alg(\T),\C) \simeq \CoAlg_{\C}(\T).\]
\end{thm}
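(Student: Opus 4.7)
The plan is to reduce \cref{monue} to \cref{lawue} via the correspondence between monads on $\Set$ and infinitary Lawvere theories. First I would invoke \cref{ismonadic} (from \cref{sec:lawapp}) to produce the infinitary Lawvere theory $\L$ associated with $\T$, together with an equivalence $\Alg(\T) \simeq \Mod(\L)$ that sends free $\T$-algebras to representable $\L$-models. Concretely, $\L = \K^{\op}$ where $\K$ is the Kleisli category of $\T$, and a $\T$-algebra is the same datum as a product-preserving functor $\L \to \Set$. The cocompleteness of $\Alg(\T)$ is then immediate from the cocompleteness part of \cref{lawue} (alternatively from Linton's theorem, already cited in the proof of \cref{lawue}).

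Next, applying \cref{lawue} to $\L$ yields the natural equivalence
\[\Hom_{\c}(\Mod(\L),\C) \simeq \Mod_{\C}(\L^{\op}),\]
so only the identification $\Mod_{\C}(\L^{\op}) \simeq \CoAlg_{\C}(\T)$ remains. For this I would invoke the general duality $\Mod_{\C}(\S)^{\op} \cong \Mod_{\C^{\op}}(\S^{\op})$ recorded in \cref{sketchdef}, which gives $\Mod_{\C}(\L^{\op}) \cong \Mod_{\C^{\op}}(\L)^{\op}$. Now $\Mod_{\C^{\op}}(\L)$ is by definition the category of product-preserving functors $\L = \K^{\op} \to \C^{\op}$, which is exactly the category $\Alg_{\C^{\op}}(\T)$ of $\T$-algebras internal to $\C^{\op}$ according to the second characterization in \cref{internalalgebra}. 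Therefore $\Mod_{\C}(\L^{\op}) \cong \Alg_{\C^{\op}}(\T)^{\op} = \CoAlg_{\C}(\T)$, the last equality being the definition of $\CoAlg_{\C}(\T)$. Stringing the three equivalences together, and noting that each is natural in $\C$ (the first two by construction, the third tautologically), completes the argument.

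The main obstacle is purely bookkeeping with opposites: one must verify that the side-switching introduced by passing from the Lawvere-theoretic $\Mod_{\C}(\L^{\op})$ to the monad-theoretic $\CoAlg_{\C}(\T)$ matches up correctly, and in particular that the two descriptions of internal $\T$-algebras in \cref{internalalgebra} (via a morphism of monads $\T \to \Hom(X^{-},X)$ on the one hand, and via a product-preserving functor $\K^{\op} \to \C$ on the other) do produce the same category $\Alg_{\C}(\T)$. Once this compatibility is in hand, \cref{monue} requires no genuinely new input beyond \cref{lawue} and the monad/Lawvere-theory dictionary of \cref{sec:lawapp}.
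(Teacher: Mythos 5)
Your proposal is correct and follows essentially the same route as the paper, whose proof is simply the one-line reduction to \cref{lawue} via \cref{ismonadic} and \cref{lawmon}; you have merely spelled out the opposite-category bookkeeping that the paper leaves implicit, using exactly the compatibility of the two descriptions of internal $\T$-algebras that \cref{internalalgebra} records for this purpose. No gaps.
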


\begin{proof}
This is equivalent to \cref{lawue} because of the equivalences in \cref{ismonadic} and \cref{lawmon}.
\end{proof}

\begin{rem}
The variant of \cref{monue} with monads over an arbitrary cocomplete category preserving reflexive coequalizers appears as \cite[Proposition 6.2.15]{bra2}. A tensor categorical version of \cref{monue} appears as \cite[Proposition 4.2.3]{bra1}.
\end{rem}


\section{Examples} \label{sec:examples}

This section showcases several examples of our theorems in \cref{sec:univ}.

\begin{ex} \label{grpue}
For the category of groups \cref{monue} says that there is an equivalence of categories
\[\Hom_{\c}(\Grp,\C) \simeq \CoGrp(\C)\]
for every cocomplete category $\C$, where a cogroup structure on $X \in \C$ consists of three morphisms $X \to X \sqcup X$ (comultiplication), $X \to 0$ (counit), $X \to X$ (coinversion) satisfying the dualized group axioms. It follows that $\Grp$ is the universal example of a cocomplete category with a cogroup object, namely $\IZ$ with the cogroup structure induced by $\Hom(\IZ,G) \cong G$. For example, $\CoGrp(\Set)$ is trivial, and a classical result by Kan \cite{kan} implies $\CoGrp(\Grp) \cong \Set_*$. Many more examples of this kind can be found in Bergman's book \cite[Chapter 10]{bergman}.
\end{ex}

\begin{ex}\label{watts}
If $\C$ is a preadditive category, every object of $\C$ carries a unique cocommutative cogroup structure. Assume that $\C$ is also cocomplete. If $R$ is a ring and ${}_R \Mod$ (resp.\ $\Mod_R$) is the category of left (resp.\ right) $R$-modules, then \cref{monue} implies
\[\Hom_{\c}({}_R \Mod,\C) \simeq \Mod_{R}(\C).\]
The universal right $R$-module object in ${}_R \Mod$ is the $R$-bimodule $R$. The classical Eilenberg-Watts Theorem $\Hom_{\c}({}_R \Mod,{}_S \Mod) \simeq {}_S \Mod_R$ is the special case $\C={}_S \Mod$ for a ring $S$.
\end{ex}

\begin{ex} \label{sheaves}
Let $X$ be a topological space. Then the category $\Sh(X)$ of sheaves (of sets) on $X$ identifies with $\Mod(\S^{\op})$ for a suitable colimit sketch $\S$ in the category $\O(X)$ of open subsets of $X$. Therefore, \cref{lfpue} implies
\[\Hom_{\c}(\Sh(X),\C) \simeq \CoSh_{\C}(X),\]
where a $\C$-valued cosheaf on $X$ is a functor $F : \O(X) \to \C$ such that for every open covering $U = \bigcup_{i \in I} U_i$ the diagram $\coprod_{i,j \in I} F(U_i \cap U_j) \rightrightarrows \coprod_{i \in I} F(U_i) \to F(U)$ is exact. The universal cosheaf in $\Sh(X)$ is $U \mapsto \Hom_{\O(X)}(-,U)$.
\end{ex}
 
\begin{ex}\label{sheafmodules}
Let $(X,\O_X)$ be a ringed space and $\Mod(X,\O_X)$ be its category of $\O_X$-modules. We can model it with a limit sketch by gluing the limit sketches for $\O_X(U)$-modules. More precisely, for every open subset $U \subseteq X$ let $\L_U$ be the Lawvere theory of $\O_X(U)$-modules with generic object $T_U$. Every inclusion $V \subseteq U$ induces a ring homomorphism $\O_X(U) \to \O_X(V)$ and hence a morphism of Lawvere theories $\rho_{V,U} : \L_U \to \L_V$ (in particular, $\rho_{V,U}(T_U) \cong T_V$). Consider the Grothendieck construction
\[\E \coloneqq \int_{U \subseteq X \text{ open}} \L_U.\]
Objects of $\E$ are pairs $(U,A)$, where $U$ is a open subset of $X$ and $A \in \L_U$ is an object, and a morphism $(U,A) \to (V,B)$ is an inclusion $V \subseteq U$ and a morphism $\rho_{V,U}(A) \to B$ in $\L_V$. We have fully faithful functors $\L_U \hookrightarrow \E$. Let $\S$ consist of the images of all product cones in each $\L_U$ as well as of the following limit cone for every open covering $U = \bigcup_{i \in I} U_i$.
\[\begin{tikzcd}
& (U_i,T_{U_i}) \ar{dr} &  \\
(U,T_U) \ar{ur} \ar{dr} && (U_i \cap U_j, T_{U_i \cap T_j}) \\ 
& (U_j,T_{U_j}) \ar{ur} &
\end{tikzcd}\]
Here, the legs vary over all $i,j \in I$. Then $(\E,\S)$ is a small realized limit sketch satisfying $\Mod(\S) \simeq \Mod(X,\O_X)$. Now \cref{lfpue} implies that $\Mod(X,\O_X)$ is cocomplete and that cocontinuous functors $\Mod(X,\O_X) \to \C$ into a cocomplete category $\C$ correspond to $\S^{\op}$-models in $\C$, which are given by a $\C$-valued cosheaf $N$ on $X$ together with an $\O_X(U)$-comodule structure on each $N(U)$ such that for all $V \subseteq U$ the restriction $N(V) \to N(U)$ is $\O_X(V)$-colinear.
 
If $X$ is a scheme, a more natural category to consider is the category $\Qcoh(X)$ of quasi-coherent $\O_X$-modules. Limit and colimit sketches for this category will be studied in future work.
\end{ex}
 
\begin{ex}\label{preord}
Let $\Pre$ denote the category of preorders (a similar discussion works for the category $\Pos$ of partial orders). We would like to describe the category $\Hom_{\c}(\Pre,\C)$, where $\C$ is cocomplete. Since $\Pre$ is locally finitely presentable, there is a small finite limit sketch $\S$ with $\Mod(\S) \simeq \Pre$, which we can make more specific as follows: We start with two objects $R,X$ and two morphisms
\[r_1,r_2 : R \rightrightarrows X.\]
We add the monomorphism cone of $(r_1,r_2)$ (\cref{monocone}) to the sketch. This ensures that for a model $M$ the map $(M(r_1),M(r_2)) : M(R) \to M(X)^2$ becomes a binary relation. Next, we add a morphism $i : X \to R$ with $r_1 \circ i = \id_X = r_2 \circ i$, which ensures reflexivity of the relation. The compositions $i \circ r_1$, $i \circ r_2$ are some idempotent endomorphisms of $R$. For transitivity, we add another distinguished cone
\[\begin{tikzcd}
 & T \ar{dl}[swap]{\pr_1} \ar{dr}{\pr_2} & \\
R \ar{dr}[swap]{r_2} && R \ar{dl}{r_1} \\ 
 & X & 
\end{tikzcd}\]
as well as a morphism $c : T \to R$ satisfying $r_1 \circ c = r_1 \circ \pr_1$ and $r_2 \circ c = r_2 \circ \pr_2$. This defines all compositions. By construction of this sketch $\S$ we have $\Mod(\S) \simeq \Pre$. Now, \cref{lfpue} implies that
\[\Hom_{\c}(\Pre,\C) \simeq \CoPre(\C)\]
is the category of \emph{copreorders} internal to $\C$, which consist an epimorphism $X \sqcup X \twoheadrightarrow R$ in $\C$, a corelation, that is coreflexive and cotransitive. By \cref{modyon} this means that for every $T \in \C$ the induced relation $\Hom(R,T) \hookrightarrow \Hom(X,T)^2$ is reflexive and transitive. The universal copreorder in $\Pre$ is the evident epimorphism $\{0\} \sqcup \{0\} \twoheadrightarrow \{0 < 1\}$. For specific examples of $\C$ we can make the equivalence even more concrete. For example, we have
\begin{align*}
\Hom_{\c}(\Pre,\Set) & \simeq \Mono(\Set),\\
\Hom_{\c}(\Pos,\Set) & \simeq \Set.
\end{align*}
For the first equivalence we need to classify the copreorders in $\Set$. This has been done by Lumsdaine \cite{lumsdaine} in a much more general setting, but let us present the quick proof in our situation. Let $r : X \sqcup X \twoheadrightarrow R$ be a coreflexive (and cotransitive) corelation on a set $X$. Let $\sim$ be the equivalence relation on $X \sqcup X \cong X \times \{0,1\}$ corresponding to $r$. Since $r$ is coreflexive, $(x,t) \sim (x',t')$ implies $x=x'$. It follows that $\sim$ is completely determined by the subset $\{x \in X : (x,0) \sim (x,1)\} \subseteq X$. Conversely, if $A \subseteq X$ is any subset, we define an equivalence relation $\sim$ on $X \times \{0,1\}$ by $(x,t) \sim (x',t') \iff x=x' \wedge (t=t' \vee x \in A)$, and the induced relation $\leq$ on the sets $\Hom(X,Y)$ is the following: For $f,g : X \to Y$ we have $f \leq g$ if and only if $f|_A = g|_A$. This is actually an equivalence relation, so that we even get an equivalence corelation on $X$. Thus, copreorders on $X$ are parametrized by subsets $A \subseteq X$, and they are automatically cosymmetric. A morphism of copreorders $(X,A) \to (Y,B)$ is a map $\varphi : X \to Y$ with $\varphi(A) \subseteq B$. This shows the equivalence $\CoPre(\Set) \simeq \Mono(\Set)$. Since the mentioned relation on $\Hom(X,Y)$ is antisymmetric for all sets $Y$ if and only if $A=X$, this equivalence restricts to $\CoPos(\Set) \simeq \Set$. Let us also mention that for a subset $A \subseteq X$ the corresponding cocontinuous functor $\Pre \to \Set$ is given by the composition $\Pre \to \Eq \to \Set$, where $\Eq$ is the category of sets with an equivalence relation, $\Pre \to \Eq$ is the reflector mapping a preorder $(P,\leq)$ to $(P,\sim)$ where $\sim$ is the equivalence relation generated by $\leq$, and $\Eq \to \Set$ maps $(P,\sim)$ to $(P \times X)/((p,a) = (p',a) \text{ if } p \sim p',\, a \in A)$.
\end{ex}

\begin{ex}
We can adjust the construction in \cref{preord} to a limit sketch that models the category $\Cat$ of small categories. Namely, $R$ and $X$ are thought of as the generic sets of morphisms and objects: $r_1,r_2 : R \rightrightarrows X$ are source and target, $i : X \to R$ is the identity and $c : T \to R$ is the composition. We omit the monomorphism cone of $(r_1,r_2)$. But we add other morphisms and another fiber product to ensure unitality and associativity. We thus obtain the well-known notion of an internal (co)category, and \cref{lfpue} implies
\[\Hom_{\c}(\Cat,\C) \simeq \CoCat(\C).\]
Lumsdaine has shown that $\CoCat(\C) \simeq \Mono(\C)$ if $\C$ is a pretopos \cite{lumsdaine}.
\end{ex}

We will now look at infinitary examples.

\begin{ex}
Consider the category $\SupLat$ of suplattices (posets in which every subset has a supremum) and sup-preserving maps. It is the category of algebras of the covariant power set monad $(P,\eta,\mu)$ on $\Set$ with unit $\eta(x)=\{x\}$ and multiplication $\mu(A)=\bigcup A$. When $\C$ is a category with products, a $\C$-valued model of the corresponding infinitary Lawvere theory is an object $X \in \C$ together with morphisms $\sup_I : X^I \to X$ for all sets $I$ such that the following conditions hold (which we write down in the language of generalized elements):
\begin{enumerate}
\item For non-empty sets $I$ and a generalized element $x \in X$ we have $\sup_I (x)_{i \in I} = x$.
\item For a map $f : I \to J$ and a generalized element $x \in X^I$ we have the generalized transitivity law $\sup_I(x)=\sup_J\bigl((\sup_{f^{-1}(j)}(x|_{f^{-1}(j)}))_{j \in J}\bigr)$.
\end{enumerate}
Notice that (1) and (2) imply that $\sup$ is commutative in the obvious sense. Now \cref{monue} states that $\SupLat$ is the universal cocomplete category $\C$ with an internal \emph{cosuplattice}, which is an object $X \in \C$ equipped with morphisms $X \to I \otimes X$ satisfying the corresponding dual conditions. Coproducts coincide with products in $\SupLat$ -- even infinite ones. As in the case of abelian groups (\cref{watts}) one can easily deduce that every suplattice has a unique cosuplattice structure, which implies
\[\Hom_{\c}(\SupLat,\SupLat) \simeq \SupLat.\]
Every suplattice $A$ hence induces cocontinuous functor $- \otimes A : \SupLat \to \SupLat$, and it is well-known that $(\SupLat,\otimes)$ is a symmetric monoidal category.
\end{ex}

\begin{ex} \label{caba}
The contravariant power set functor $P : \Set^{\op} \to \Set$ is monadic \cite[Theorem 5.1.1]{barrwells} with left adjoint $P^{\op} : \Set \to \Set^{\op}$. The monad is a bit easier to understand using the category $\CABA$ of complete atomic boolean algebras. In fact, if we equip a power set $P(X)$ with the usual structure of a boolean algebra $\tilde{P}(X)$, we obtain an equivalence of categories $\smash{\tilde{P} : \Set^{\op} \to \CABA}$ \cite[Introduction]{johnstone}. The monadic functor therefore reduces to the forgetful functor $U : \CABA \to \Set$, whose left adjoint is $\smash{\tilde{P} \circ P^{\op}}$. Manes explains in \cite[Example 3.46]{manes2} directly why $\smash{\tilde{P}(P(X))}$ is the free CABA on a set $X$. Now \cref{internalalgebra} allows us to define CABAs internal to any complete category $\C$. This might be surprising, since a priori \emph{atomic} is not an algebraic condition, but actually it is equivalent to \emph{completely distributive} by \cite[Proposition VII.1.16]{johnstone}, and this is an algebraic condition. These are objects $B \in \C$ equipped with natural maps $P(P(X)) \to \Hom_{\C}(B^X,B)$ satisfying two axioms. See \cite[Formula 1.5.22]{manes1} for how the induced complete boolean algebra structure looks like. We now have the dual version of a \emph{coatomic cocomplete coboolean coalgebra} as well, and \cref{monue} implies that for every cocomplete category $\C$ there are equivalences
\[\Hom_{\c}(\Set^{\op},\C) \simeq \Hom_{\c}(\CABA,\C) \simeq \CoCABA(\C).\]
Equivalently, for every complete category $\C$ we have
\[\Hom^{\c}(\Set,\C) \simeq \CABA(\C),\]
showing that $\Set$ is the universal complete category with an internal CABA.
\end{ex}

\begin{ex}
The forgetful functor $\CompHaus \to \Set$ from compact Hausdorff spaces is monadic \cite[Proposition 3.17]{barrwells} with respect to the ultrafilter monad $\beta$. By \cref{monue} cocontinuous functors $\CompHaus \to \C$ are classified by objects $X \in \C$ equipped with monad maps $\beta \to \Hom(X,(-) \otimes X)$, which one can think of as an ultrafilter coconvergence on $X$. The universal example is $1 \in \CompHaus$ with the canonical isomorphisms $\beta(I) \to \Hom(1,I \otimes 1)$ (notice that $I \otimes 1$ is a coproduct in $\CompHaus$, not in $\Top$, and its underlying set is $\beta(I)$).
\end{ex}

\begin{ex}
The non-existence of a free complete lattice on three generators \cite[Theorem 1]{hales} implies that the concrete category of complete lattices is not monadic over $\Set$, so that our theory cannot describe cocontinuous functors on it.
\end{ex}


\section{Topological space objects} \label{sec:topspace}

Our next goal is to describe a specific limit sketch that models the category $\Top$. By \cref{internalize} this yields in particular the notion of a topological space internal to any complete category. But in this section we will go the other way round and give this internal definition first.

Let us mention that Burroni \cite{burroni1} has given a large mixed sketch that models $\Top$. A summary can be found in \cite{burroni2}. His construction is based on filter convergence. By axiomatizing neighborhood systems instead, Martins-Ferreira has given a characterization of topological spaces in terms of \emph{spacial fibrous preorders} \cite{ferreira}, and these can be internalized in any category with finite limits. For elementary topoi the definition of topological space objects in terms of open subobjects is a straight forward adaptation for the case of $\Set$, see for instance Macfarlane \cite{macfarlane}, Moerdijk \cite{moerdijk} and Stout \cite{stout}, and in fact later in \cref{sec:topop} we will give a similar definition.

Our approach uses the notion of \emph{net convergence} instead of filter convergence (which is more intuitive, since nets are just generalized sequences) and is closely related (but not identical) to Edgar's description of $\Top$ as a category of equational multialgebras \cite{edgar}. For this reason \cref{topob} and similarly \cref{topsketch} below must be known to (some) category theorists (see also \cite[Proposition 25]{guitart}), but the objective of this section is to make it more accessible and well-known.
 
\begin{rem} \label{netchar}
Recall \cite[Chapter 2]{kelley} that a net inside a set $X$ is a map $x : P \to X$ from the underlying set of a directed set $(P,\leq)$ (which we usually abbreviate by $P$) to $X$, and for a topological space $(X,\tau)$ it is said to converge to a point $s \in X$ when for every open set $s \in U \subseteq X$ there is some $p \in P$ such that $x$ maps $P_{\geq p} = \{q \in P : q \geq p\}$ into $U$. In that case we write $x \to s$. A subnet of $x : P \to X$ is a net of the form $x \circ h : Q \to X$ where $h : Q \to P$ is a cofinal map (not assumed to be monotonic, we just require that for all $p \in P$ there is some $q \in Q$ with $h(Q_{\geq q}) \subseteq P_{\geq p}$). A subset $A \subseteq X$ is closed if and only if for every convergent net $x \to s$ with values $x_p \in A$ we also have $s \in A$ \cite[Chapter 2, Theorem 6]{kelley}.
Hence, the topology is determined by the net convergence relation. Also, a map between topological spaces is continuous if and only if it preserves the net convergence relation \cite[Chapter 3, Theorem 1]{kelley}. The crucial observation is that it is possible to classify those net convergence relations induced by topologies: If $X$ is a set and $\to$ is a relation from nets in $X$ (defined on any directed sets) to points of $X$, then $\to$ is the net convergence relation induced by a (unique) topology on $X$ if and only if the following four axioms hold:
\begin{enumerate}
\item \emph{Constant nets.} For $s \in X$ we have $(s)_{p \in P} \to s$.
\item \emph{Subnets.} If $x \to s$, then $y \to s$ for every subnet $y$ of $x$.
\item \emph{Locality.} If $x$ is a net in $X$ and $s \in X$ is a point such that every subnet $y$ of $x$ has a subnet $z$ with $z \to s$, then $x \to s$.
\item \emph{Iterations.} Let $P$ be a directed set, and for each $p \in P$ let $Q_p$ be a directed set. Assume that $x : \coprod_{p \in P} Q_p \to X$ is a map such that for every $p \in P$ we have $(x_{p,q})_{q \in Q_p} \to s_p$ for some $s_p \in X$, and that we have $(s_p)_{p \in P} \to s$ for some $s \in X$. Then we also have $(x_{p,f(p)})_{(p,f)} \to s$ for the net indexed by the product $P \times \prod_{p \in P} Q_p$.
\end{enumerate}
This result is due to Kelley \cite[Chapter 2, Theorem 9]{kelley}, but a similar version (for $T_1$-spaces and with a stronger notion of subnet) was proven long before by Birkhoff \cite[Theorem 7]{birkhoff}. Actually, the third axiom can be strengthened, as can be seen in Kelley's proof: If $x : P \to X$ is a net and $s \in X$ is a point such that for every cofinal subset $Q \subseteq P$ the subnet $x|_Q : Q \to X$ has a subnet $z$ with $z \to s$, then $x \to s$. We will use this axiom instead because it requires less parameters and it is actually necessary to build the fiber product $F$ in the definition below.
\end{rem}
 
We have just described a category of \emph{sets} equipped with a good notion of net convergence that is isomorphic to $\Top$. By replacing sets with objects of a category, we can internalize the notion of a topological space as follows.
 
\begin{defi} \label{topob}
Let $\C$ be a complete category. A \emph{topology} or \emph{net convergence relation} on an object $X \in \C$ consists of a family of monomorphisms
\[\iota : C(P,X) \hookrightarrow X^P \times X\]
for directed sets $P$ subject to the following four axioms:
\begin{enumerate}
\item \emph{Constant nets.} For every directed set $P$ there exists a (necessarily unique) morphism $X \to C(P,X)$ such that
\[\begin{tikzcd}[column sep=20pt]
& X \ar{dr}{\Delta} \ar[dashed]{dl}[swap]{\exists}  & \\
C(P,X) \ar[hook,swap]{rr}{\iota} &&   X^P \times X
\end{tikzcd}\]
commutes. Here, $ \Delta$ is the diagonal morphism.
\item  \emph{Subnets.} For every cofinal map $h : Q \to P$ of directed sets there is a (necessarily unique) morphism $C(P,X) \to C(Q,X)$ such that the diagram
\[\begin{tikzcd}[column sep=35pt]
C(P,X) \ar[hook]{r}{\iota} \ar[dashed]{d}[swap]{\exists} & X^P \times X \ar{d}{X^h \times \id_X} \\
C(Q,X) \ar[hook,swap]{r}{\iota} &   X^Q \times X
\end{tikzcd}\]
commutes.
\item \emph{Locality.} Let $P$ be a directed set, and for each cofinal subset $Q \subseteq P$ let $h_Q : R_Q \to Q$ be some cofinal map. Consider the fiber product (with respect to the obvious morphisms)
\[F \coloneqq \left(\prod_{Q \subseteq P \text{ cofinal}} C(R_Q,X) \right) \times_{\prod\limits_{Q \subseteq P \text{ cofinal}} (X^{R_Q} \times X)} (X^P \times X).\]
There is a (necessarily unique) morphism $F \to C(P,X)$ such that the diagram
\[\begin{tikzcd}[column sep=20pt]
& F \ar[hook]{dr}{\pr_2} \ar[dashed]{dl}[swap]{\exists} &  \\
C(P,X) \ar[hook,swap]{rr}{\iota} &&   X^P \times X
\end{tikzcd}\]
commutes.
\item  \emph{Iterations.} Let $P$ be a directed set, and for each $p \in P$ let $Q_p$ be a directed set. There is a (necessarily unique) morphism on the fiber product (which respect to the obvious morphisms to $X^P$) 
\[\left(\prod_{p \in P} C(Q_p,X)\right) \times_{X^P} C(P,X) \to C\left(P \times \prod_{p \in P} Q_p,X\right)\]
such that the diagram
\[\begin{tikzcd}
\left(\prod\limits_{p \in P} C(Q_p,X)\right) \times_{X^P} C(P,X)  \ar[hook]{r}{\iota} \ar[dashed]{d}[swap]{\exists}
& \prod\limits_{p \in P} (X^{Q_p} \times X) \times_{X^P} (X^P \times X) \ar{d}{\vartheta}
\\
C\left(P \times \prod\limits_{p \in P} Q_p,X\right) \ar[hook,swap]{r}{\iota}
&  X^{P \times \prod_{p \in P} Q_p} \times X
\end{tikzcd}\]
commutes, where the morphism $\vartheta$ is defined via generalized elements by
\[\vartheta\left(\bigl((x_{p,q})_{q \in Q_p},s_p\bigr)_{p \in P},\bigl((s_p)_{p \in P},s\bigr)\right) \coloneqq \bigl((x_{p,f(p)})_{(p,f) \in  P \times \prod_{p \in P} Q_p},s\bigr).\]
Thus, $\vartheta$ is defined by $\pr_2 \circ \vartheta = \pr_2 \circ \pr_2$ and $\pr_{(p,f)} \circ \pr_1 \circ \vartheta = \pr_{f(p)} \circ \pr_1 \circ \pr_p \circ \pr_1$.
\end{enumerate}
We call $(X,C)$ a \emph{net-based topological space object} in $\C$. Usually we omit the word \emph{net-based}. A morphism $(X,C) \to (Y,D)$ of topological space objects is a morphism $f : X \to Y$ such that for all directed sets $P$ there is a (necessarily unique) morphism $C(P,X) \to D(P,Y)$ such that the diagram
\[\begin{tikzcd}[column sep=35pt]
C(P,X) \ar[hook]{r}{\iota} \ar[dashed]{d}[swap]{\exists} & X^P \times X \ar{d}{f^P \times f} \\
D(P,Y) \ar[hook,swap]{r}{\iota} &   Y^P \times Y
\end{tikzcd}\]
commutes. This defines a category $\Top(\C)$ with a forgetful functor $\Top(\C) \to \C$, $(X,C) \mapsto X$. 
\end{defi}

\begin{ex}
We record some special cases of this definition.
\begin{enumerate}
\item We have $\Top(\Set) \simeq \Top$ by \cref{netchar}.
\item If $\C$ is the category of $T$-algebras for some algebraic theory $T$, then $\Top(\C)$ is the category of topological $T$-algebras. For instance, $\Top(\Grp) \cong \Grp(\Top)$ is the category of topological groups, and $\Top(\Set_*) \cong \Top_*$ is the category of pointed topological spaces.
\item More generally, for every limit sketch $\S$ we have $\smash{\Top(\Mod(\S)) \cong \Mod_{\Top}(\S)}$ (since limits commute with limits). Thus, \cref{sheaves} implies for every topological space $S$ that $\Top(\Sh(S)) \cong \Sh(S,\Top)$ is the category of $\Top$-valued sheaves on $S$.
\item The iterated category $\Top(\Top)$ is closely related to the category $\BiTop$ of bitopological spaces (sets equipped with two topologies, without any compatibility relation, and maps that are continuous with respect to both topologies). Namely, $\BiTop$ is the full subcategory of those $(X,C) \in \Top(\Top)$ for which $C(P,X) \hookrightarrow X^P \times X$ is a \emph{regular} monomorphism in $\Top$, i.e.\ an embedding. Since we can always replace $C(P,X)$ by the image of $C(P,X) \hookrightarrow X^P \times X$, $\BiTop$ is a coreflective subcategory of $\Top(\Top)$.
\end{enumerate}
\end{ex}

\begin{rem} \label{observations}
Let us gather some easy observations.
\begin{enumerate}
\item The monomorphism $C(P,X) \hookrightarrow X^P \times X$ in \cref{topob} corresponds to a jointly monomorphic pair of morphisms $\{C(P,X) \to X^P,\, C(P,X) \to X\}$, which can be informally written as $(x \to s) \mapsto x$ and $(x \to s) \mapsto s$ for every net convergence relation $x \to s$ in $X$. Of course there is no harm to replace the monomorphism by a subobject $C(P,X) \subseteq X^P \times X$ (i.e.\ an isomorphism class of monomorphisms), but it is not necessary either. We could have written $X^{P \sqcup 1}$ instead of $X^P \times X$ as well.
\item If $\C$ is a complete category, then $\Top(\C)$ is complete as well, and the forgetful functor $\Top(\C) \to \C$ preserves limits. (This is because \cref{topob} uses the language of complete categories only, and limits commute with limits. We could also deduce this fact from \cref{limitsofmodels} after translating \cref{topob} into a limit sketch, which we will only do in the next section.) For example, the convergence of a net in a product of topological spaces can simply be tested componentwise. This description of a product space is much easier than the open sets in the product topology in the case $\C=\Set$.
\item Any continuous functor $\C \to \D$ between complete categories induces a continuous functor $\Top(\C) \to \Top(\D)$ such that the evident square commutes.
\item For a family of complete categories $(\C_i)_{i \in I}$ we have $\Top(\prod_{i \in I} \C_i) \cong \prod_{i \in I} \Top(\C_i)$.
\item Let $X \in \C$ be an object equipped with a class of morphisms $C(P,X) \to X^P \times X$ for all directed sets $P$. Then $(X,C) \in \Top(\C)$ if and only if for every test object $T \in \C$ the morphism $\Hom(T,C(P,X)) \to \Hom(T,X^P \times X) = \Hom(T,X)^P \times \Hom(T,X)$ defines a topology on the \emph{set} $\Hom(T,X)$ (in terms of net convergence). This follows from the Yoneda Lemma and \cref{netchar} (and again, we could deduce this from \cref{modyon} as soon as we have a limit sketch).
\item Generally speaking, one can follow the philosophy of categorical algebra (which apparently is not restricted to algebra) and generalize various constructions and notions from $\Top$ to $\Top(\C)$, where $\C$ is a well-behaved category. For example, we can define $(X,C) \in \Top(\C)$ to be Hausdorff when $C(P,X) \to X^P$ is a monomorphism for every directed set $P$, motivated by the characterization of usual Hausdorff topologies as those in which every net converges to at most one point \cite[Chapter 2, Theorem 3]{kelley}.
\item Any object $X \in \C$ can be equipped with the trivial topology $C(P,X) \coloneqq X^P \times X$ (the idea being that all nets converge to any point), also known as indiscrete topology. This provides a functor $\C \to \Top(\C)$ that is right adjoint to the forgetful functor. Discrete topologies are much more involved (which is the price to pay when not using open sets), see \cref{discrete} below.
\item If $X=1$ is the terminal object, then the trivial topology is the only topology on $X$.
\end{enumerate}
\end{rem}

Recall that a category $\C$ is called \emph{mono-complete} when for every object $X \in \C$ and every class of monomorphisms into $X$ their wide pullback over $X$ (i.e.\ their intersection) exists. For example, this holds when $\C$ is complete and wellpowered. Also recall the notion of a \emph{topological functor} \cite[Chapter 21]{ahs}, which is a faithful functor satisfying a certain lifting condition for (possibly large) structured sinks. The basic example is the forgetful functor $\Top \to \Set$. We generalize this example as follows.

\begin{prop}\label{topofunk}
Let $\C$ be a complete and mono-complete category. Then the forgetful functor $U : \Top(\C) \to \C$ is topological.
\end{prop}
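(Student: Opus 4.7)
The plan is to directly exhibit $U$ as a topological functor by constructing unique initial lifts of arbitrary structured sources. Faithfulness of $U$ is automatic: for a morphism in $\Top(\C)$ the induced maps $C(P,X) \to D(P,Y)$ in \cref{topob} are uniquely determined (if they exist) by $f : X \to Y$, because $D(P,Y) \hookrightarrow Y^P \times Y$ is monomorphic, so they constitute a property and not extra data. Thus I only need to produce a unique initial lift of an arbitrary $U$-structured source $(f_i : X \to U(Y_i,D_i))_{i \in I}$, where $I$ may be a proper class. The recipe is pullback-then-intersect: for each directed set $P$ and each $i \in I$, form the pullback
\[C_i(P,X) \coloneqq (X^P \times X) \times_{Y_i^P \times Y_i} D_i(P,Y_i) \hookrightarrow X^P \times X,\]
which exists because $\C$ is complete and is a monomorphism because pullbacks preserve monomorphisms. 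Then set
\[C(P,X) \coloneqq \bigcap_{i \in I} C_i(P,X) \hookrightarrow X^P \times X,\]
using mono-completeness of $\C$ so that this possibly large intersection of subobjects exists.

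Verifying axioms (1)--(4) of \cref{topob} for $(X,C)$ follows a uniform pattern: to produce a morphism into $C(P,X)$ it suffices, by the intersection property, to produce morphisms into each $C_i(P,X)$, and by the pullback property this reduces to giving compatible morphisms into $D_i(P,Y_i)$ and into $X^P \times X$. For axiom (1), $\Delta_X : X \to X^P \times X$ postcomposed with $f_i^P \times f_i$ equals $\Delta_{Y_i} \circ f_i$, which factors through $D_i(P,Y_i)$ by axiom (1) for $(Y_i,D_i)$. For axiom (2), the composition $C(P,X) \hookrightarrow X^P \times X \xrightarrow{X^h \times \id} X^Q \times X$ postcomposed with $f_i^Q \times f_i$ equals $(Y_i^h \times \id) \circ (f_i^P \times f_i) \circ \iota$, and this factors through $D_i(Q,Y_i)$ since $C(P,X) \to Y_i^P \times Y_i$ already factors through $D_i(P,Y_i)$ and axiom (2) for $(Y_i,D_i)$ applies. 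Axioms (3) and (4) are handled identically: the fiber product $F$ appearing in each axiom maps compatibly into the analogous fiber product formed for $(Y_i,D_i)$, and then axioms (3), respectively (4), for $(Y_i,D_i)$ produce the required factorization through $D_i$. The main tedium (not obstacle) lies in axiom (4) with its nested fiber products, but nothing deeper than ``limits commute with limits'' is needed.

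For initiality, let $(Z,E) \in \Top(\C)$ and let $g : Z \to X$ be a morphism in $\C$ such that every composite $f_i \circ g : (Z,E) \to (Y_i,D_i)$ is a morphism of topological space objects. By definition this means that for each $P$ and each $i$ the composition $E(P,Z) \hookrightarrow Z^P \times Z \to Y_i^P \times Y_i$ factors through $D_i(P,Y_i)$. Combined with $E(P,Z) \to X^P \times X$, the pullback property yields a morphism $E(P,Z) \to C_i(P,X)$ for each $i$, and then the intersection property produces a single morphism $E(P,Z) \to C(P,X)$. Hence $g$ lifts uniquely to a morphism $(Z,E) \to (X,C)$, showing that $(X,C)$ equipped with its morphisms to each $(Y_i,D_i)$ is the desired $U$-initial lift.
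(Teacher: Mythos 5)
Your proof is correct and follows essentially the same route as the paper's: both construct the lift of a (possibly large) structured source by pulling back each $D_i(P,Y_i)$ along $f_i^P \times f_i$ and then intersecting over $i$ using mono-completeness, verify the four axioms by the formal universal property of this intersection of pullbacks, and check initiality the same way. The only cosmetic difference is that you spell out faithfulness and the axiom verifications in slightly more detail than the paper, which only works out axiom (2) explicitly.
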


\begin{proof}
Let $\{X \to U(S_i,C_i) = S_i\}_{i \in I}$ be a $U$-structured sink (where $I$ is possibly large). We want a net in $X$ to be convergent when its image in each $S_i$ is convergent. Thus, for a directed set $P$ let $C(P,X) \hookrightarrow X^P \times X$ be the intersection of the pullbacks
\[(X^P \times X) \times_{{S_i}^P \times S_i} C_i(P,S_i) \hookrightarrow X^P \times X.\]
We need to show that $C$ is a topology on $X$, and that a morphism $U(Y,D) \to X$ lifts to a morphism $(Y,D) \to (X,C)$ if and only if each composition $U(Y,D) \to X \to S_i$ does. This is actually a completely formal application of the universal property of $C(P,X)$ in $\C$. For example, the second topology axiom for $(S_i,C_i)$ asserts that every cofinal map $Q \to P$ induces a (unique) morphism $C_i(P,S_i) \to C_i(Q,S_i)$ over ${S_i}^P \times S_i \to {S_i}^Q \times S_i$, therefore a morphism
\[\smash{(X^P \times X) \times_{S_i^P \times S_i} C_i(P,Y_i) \to (X^Q \times X) \times_{{S_i}^Q \times S_i} C_i(Q,S_i)},\]
and hence a morphism $C(P,X) \to C(Q,X)$ over $X^P \times X \to X^Q \times X$. The other axioms can be proved in the same way. A morphism $Y = U(Y,D) \to X$ lifts to a morphism $(Y,D) \to (X,C)$ if and only if for every directed set $P$ the morphism $D(P,Y) \hookrightarrow Y^P \times Y \to X^P \times X$ factors through $C(P,X)$, which by definition of $C(P,X)$ means that for every $i \in I$ the composition $D(P,Y) \to {S_i}^P \times S_i$ factors through $C_i(P,S_i)$, which precisely means that $Y \to S_i$ lifts to a morphism $(Y,D) \to (S_i,C_i)$.
\end{proof}

The previous proof shows that, if $\C$ is just assumed to be complete, at least small structured sinks can be lifted.

\begin{cor} \label{topprop}
Let $\C$ be a mono-complete and complete category.
\begin{enumerate}
\item The forgetful functor $\Top(\C) \to \C$ has a left adjoint, the discrete topology.
\item If $\C$ is cocomplete, then $\Top(\C)$ is cocomplete too, and the forgetful functor $\Top(\C) \to \C$ preserves colimits.
\end{enumerate}
\end{cor}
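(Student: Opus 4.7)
The plan is to deduce both assertions directly from Proposition \ref{topofunk}, which establishes that $U : \Top(\C) \to \C$ is topological, together with two well-known general facts about topological functors (see \cite[Chapter 21]{ahs} — in particular the propositions asserting that every topological functor has a left and a right adjoint, and that topological functors lift both limits and colimits).

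First, for part (1), a topological functor always admits a left adjoint, given explicitly by the initial lift of the empty structured sink into each object $X \in \C$. In our case, unwinding the construction from the proof of \cref{topofunk} applied to the empty sink $\{X \to U(S_i,C_i)\}_{i \in \emptyset}$, we see that the intersection of an empty family of subobjects of $X^P \times X$ is just $X^P \times X$ itself — but this gives the trivial (indiscrete) topology, not the discrete one. The left adjoint comes instead from the dual procedure: lifting the empty structured \emph{source} out of $X$, which picks out the \emph{finest} topology on $X$, i.e.\ the smallest $C(P,X) \hookrightarrow X^P \times X$ closed under the four axioms. This is the discrete topology on $X$, and the universal property — that any morphism $X \to U(Y,D)$ in $\C$ lifts uniquely to $\Top(\C)$ — follows from the lifting property of topological functors applied to singleton cosinks.

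For part (2), assume in addition that $\C$ is cocomplete. Topological functors create all (small) limits and colimits in the following strong sense: given any small diagram $F : \I \to \Top(\C)$, one forms the colimit $\colim (U \circ F) \in \C$ and then equips it with the final lift of the structured sink given by the colimit coprojections $F(i) \to \colim(U \circ F)$ composed with the topologies $C_i$. By \cref{topofunk} this final lift exists in $\Top(\C)$ and realizes the colimit, and $U$ preserves it by construction. Thus $\Top(\C)$ is cocomplete and $U$ is cocontinuous. Together with the observation in \cref{observations}(2) that $U$ is always continuous when $\C$ is complete, this completes the proof.

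The main conceptual obstacle is not really an obstacle at all once one invokes the standard theory of topological functors — the content has already been done in \cref{topofunk}. The only small subtlety worth flagging explicitly is that the discrete topology requires lifting a structured \emph{source} (to get the finest compatible structure) rather than a structured sink, which is the dual side of the topological-functor machinery.
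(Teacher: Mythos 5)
Your proposal is correct and follows essentially the same route as the paper, which simply deduces both parts from \cref{topofunk} together with the general facts about topological functors in \cite[Chapter 21]{ahs} (existence of a left adjoint via final lifts of empty sinks, and lifting of colimits). The only point worth tidying is the source/sink terminology: the dual lifting procedure you invoke for the discrete structure and for colimits is guaranteed by the topological duality theorem (a topological functor has both initial and final lifts), which is among the standard facts the paper cites, so there is no gap.
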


\begin{proof}
This follows from \cref{topofunk} and general facts about topological functors, for which we refer to \cite[Chapter 21]{ahs}.
\end{proof}

The proof of \cref{topprop} does not give a concrete description of the discrete topology on an object of $\C$. At least this is possible if $\C$ is locally finitely presentable:

\begin{prop} \label{discrete}
Assume that $\C$ is a locally finitely presentable category. Then the forgetful functor $\Top(\C) \to \C$ has a left adjoint. It equips an object $X$ with the discrete topology defined by
\[C(P,X) \coloneqq \colim_{p \in P} X^P \times_{X^{P_{\geq p}}} X.\]
\end{prop}
 
\begin{proof}
Let $X \in \C$. We need to categorify the construction of the discrete topology on $X$ in terms of net convergence. We will divide the proof into several steps.

\textbf{Definition.} For directed sets $P$ and $p \in P$ let $C_p(P,X)$ be the fiber product of the diagonal morphism $X \to X^{P_{\geq p}}$ and the restriction morphism $X^P \to X^{P_{\geq p}}$. One can see $C_p(P,X)$ as the object of $P$-indexed nets in $X$ that become constant on $P_{\geq p}$. In particular, we have a monomorphism $C_p(P,X) \hookrightarrow X^P \times X$. For $p \leq q$ there is a morphism $C_p(P,X) \to C_q(P,X)$ over $X^P \times X$. We define
\[C(P,X) \coloneqq \colim_{p \in P} C_p(P,X),\]
which hence has a monomorphism to $X^P \times X$, since finite limits commute with filtered colimits in $\C$. Thus, $C(P,X)$ may be thought of as the object of nets in $X$ that are eventually constant. In the remainder of the proof, we will simplify the notation and just write
\[C_p(P,X) \subseteq C(P,X) \subseteq X^P \times X,\]
leaving the monomorphisms implicit, and use generalized elements throughout. Thus, the elements of $C_p(P,X)$ are those $(x,s) \in X^P \times X$ such that $x(q)=s$ for all $q \geq p$. Next, we verify the four topology axioms in \cref{topob}.

\textbf{Axiom 1.} For a generalized element $x \in X$ we trivially have $(\Delta(x),x) \in C_p(P,X)$ for any $p$ and hence $(\Delta(x),x) \in C(P,X)$.

\textbf{Axiom 2.} Let $h : Q \to P$ be a cofinal map of directed sets. Then $h^* \times \id : X^P \times X \to X^Q \times X$ maps $C(P,X)$ into $C(Q,X)$, since for every $p \in P$ we may choose $q \in Q$ with $h(Q_{\geq q}) \subseteq P_{\geq p}$, and then $h^* \times \id$ maps $C_p(P,X)$ into $C_q(Q,X)$.

\textbf{Axiom 3.} Let $(h_Q : R_Q \to Q)$ be a family of cofinal maps indexed by all cofinal subsets $Q \subseteq P$, and let $F \subseteq X^P \times X$ be the preimage (pullback) of $\prod_Q C(R_Q,X)$ under the canonical map $X^P \times X \to \prod_Q X^{R_Q} \times X$. Thus, $(x,s) \in X^P \times X$ belongs to $F$ if and only if for every cofinal subset $Q \subseteq P$ we have $(h_Q^*(x|_Q),s) \in C(R_Q,X)$. We need to show $F \subseteq C(P,X)$. Since $\C$ is locally finitely presentable, it suffices to prove $\Hom(T,F) \subseteq \Hom(T,C(P,X))$ for all finitely presentable objects $T$, i.e.\ that for all generalized elements of shape (i.e.\ domain) $T$ we have $(x,s) \in F \implies (x,s) \in C(P,X)$. So let $(x,s) \in F$, and assume $(x,s) \notin C(P,X)$. Then for all $p \in P$ we have $(x,s) \notin C_p(P,X)$, i.e.\ there is some $q \geq p$ with $x(q) \neq s$. This shows that $Q \coloneqq \{q \in P : x(q) \neq s\}$ is a cofinal subset of $P$. By assumption $(h_Q^*(x|_Q),s) \in C(R_Q,X)$ and hence, since the shape $T$ is finitely presentable, $(h_Q^*(x|_Q),s) \in C_r(R_Q,X)$ for some $r \in R_Q$. In particular, we have $s = h_Q^*(x|_Q)(r) = x(h_Q(r))$, which contradicts the definition of $Q$.

\textbf{Axiom 4.} Let $(Q_p)_{p \in P}$ be a family of directed sets indexed by a directed set $P$. We need to show that the morphism $\vartheta$ in \cref{topob} maps $\prod_{p \in P} C(Q_p,X) \times_{X^P} C(P,X)$ into $C(P \times \prod_{p \in P} Q_p,X)$. Again, we may work with generalized elements whose shape is finitely presentable. A generalized element of the fiber product has the form
\[\bigl(((x_{p,q})_{q \in Q_p},s_p)_{p \in P},((s_p)_{p \in P},s)\bigr),\]
where for each $p \in P$ there is some $g(p) \in Q_p$ (here we use the axiom of choice) with $x_{p,q}=s_p$ for all $q \in Q_{\geq g(p)}$, and there is some $p_0 \in P$ such that $s_p=s$ for all $p \in P_{\geq p_0}$. Then $(p_0,g) \in P \times \prod_{p \in P} Q_p$, and for all $(p,f) \geq (p_0,g)$ in that directed product set we have $x_{p,f(p)}=s_p$ because $f(p) \geq g(p)$ and $s_p=s$ because $p \geq p_0$, hence $x_{p,f(p)}=s$. This finishes the proof of $(X,C) \in \Top(\C)$.

\textbf{Universal Property.} To show the universal property, let $(Y,D) \in \Top(\C)$ and $f : X \to Y$ be a morphism. We need to verify that $f$ extends uniquely to a morphism $(X,C) \to (Y,D)$. This just means that for all directed sets $P$ the morphism $f^P \times f : X^P \times X \to Y^P \times Y$ maps $C(P,X)$ into $D(P,Y)$. By definition of $C(P,X)$ this means that for every $p \in P$ and every $(y,s) \in Y^P \times Y$ with $y(q)=s$ for all $q \in P_{\geq p}$ we have $(y,s) \in D(Y,P)$, i.e.\ that our categorical definition ``believes'' that eventually constant nets are convergent. This follows from the first and the third axiom: For every cofinal subset $Q \subseteq P$ the subnet $y|_Q$ has a subnet converging to $s$, namely the constant net $y|_{Q \cap P_{\geq p}}$.
\end{proof}

\begin{rem} \label{coprodtop}
If $\C$ is cocomplete, \cref{topprop} does not give a concrete description of colimits in $\Top(\C)$ either. Coequalizers are already hard to describe in terms of net convergence in the simplest example $\Top(\Set)$. But convergent nets in coproducts in $\Top(\Set)$ are easy to describe: They are eventually in one summand and converge there. We can categorify this as follows: Assume that $\C$ is a category with the following (topos-like) properties:
\begin{enumerate}
\item $\C$ is complete, has coproducts and filtered colimits,
\item coproduct inclusions are monomorphisms and pairwise disjoint,
\item monomorphisms are stable under coproducts and filtered colimits,
\item pullbacks are compatible with coproducts and filtered colimits,
\item $\C$ satisfies the IPC-property \cite[Definition 3.1.10]{kashi}, which asserts that (infinite) products commute with filtered colimits in a certain way.
\end{enumerate}
Then we can describe coproducts in $\Top(\C)$ as follows: Let $\{(X_i,C)\}_{i \in I}$ be a family of objects in $\Top(\C)$. We define a topology on $X \coloneqq \coprod_{i \in I} X_i$ as follows. Let $P$ be a directed set. For every $i \in I$ and $p \in P$ let
\[C^i_p(P,X) \coloneqq (X^P \times X_i) \times_{(X^{P_{\geq p}} \times X_i)} C(P_{\geq p},X_i).\]
For $p \leq q$ there is a canonical morphism $C^i_p(P,X) \to C^i_q(P,X)$ (since $P_{\geq q} \subseteq P_{\geq p}$ is cofinal), and we define
\[C^i(P,X) \coloneqq {\colim}_{p \in P} \, C^i_p(P,X),\]
and finally
\[C(P,X) \coloneqq \coprod_{i \in I} C^i(P,X).\]
The mentioned assumptions on $\C$ can be used to show that $C$ is a topology on $X$, namely they allow us to categorify the proof for $\C=\Set$. The universal property is easy to verify. We will omit the details here.
\end{rem}

The following lemma will be used to reduce the general classification of topologies on an object $X$ to the case that $X$ is an initial object.

\begin{lemma} \label{topslice}
Let $\C$ be a complete category, and let $C$ be a topology on $X \in \C$. Then the morphisms $X \to C(P,X)$ from the first topology axiom define a topology on $\id_X : X \to X$ in the coslice category $X / \C$.
\end{lemma}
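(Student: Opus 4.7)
The plan is to verify that the monomorphisms
\[\iota : (X \xrightarrow{\pi_P} C(P,X)) \hookrightarrow (\id_X)^P \times \id_X\]
in $X/\C$ satisfy the four topology axioms, where $\pi_P : X \to C(P,X)$ denotes the arrow produced by axiom 1 of the given topology $C$. First I would record the relevant limits in the coslice: since $\C$ is complete, so is $X/\C$, with non-empty limits computed as in $\C$ equipped with the induced structure morphism from $X$. In particular $(\id_X)^P$ has underlying object $X^P$ with structure morphism $\Delta_X$, and $(\id_X)^P \times \id_X$ has underlying object $X^P \times X$ with structure morphism $\delta_P \coloneqq (\Delta_X, \id_X)$. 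Axiom 1 of $C$ is precisely the assertion $\iota \circ \pi_P = \delta_P$, which both says that $\pi_P$ defines an object of $X/\C$ and that $\iota$ is a morphism in $X/\C$; since the forgetful functor $U : X/\C \to \C$ is faithful, $\iota$ is moreover a monomorphism there.

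Next I would verify the four axioms. Axiom 1 in $X/\C$ is immediate: the required arrow $\id_X \to (X \xrightarrow{\pi_P} C(P,X))$ is $\pi_P$ itself, and its composite with $\iota$ is $\delta_P$ by axiom 1 for $C$. For the remaining axioms, the required morphism in $X/\C$ will always be the very arrow already supplied by the corresponding axiom for $C$; the only additional check is compatibility with the structure morphisms from $X$. For instance, given a cofinal $h : Q \to P$, let $\varphi : C(P,X) \to C(Q,X)$ be the arrow from axiom 2 for $C$. To see $\varphi \circ \pi_P = \pi_Q$, I would post-compose with the monomorphism $\iota : C(Q,X) \hookrightarrow X^Q \times X$: both sides reduce to $\delta_Q$, the left via axiom 2 for $C$ and $(X^h \times \id_X) \circ \delta_P = \delta_Q$, the right by axiom 1 for $C$, and $\iota$ being monic gives the equality.

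For axioms 3 and 4 the pattern is identical, but there is the additional preliminary step of identifying the fiber products appearing there in $X/\C$. Since $U$ preserves limits and the $X$-structure morphisms into each factor all agree (being components of $\delta_P$ or its restrictions), these fiber products have the same underlying objects as in $\C$, with the obvious diagonal as structure morphism from $X$. Once this is noted, the arrow supplied by the axiom for $C$ is the desired morphism in $X/\C$, and its compatibility with the $X$-structure maps reduces, via post-composition with a suitable $\iota$, to the observation that the transition maps in the diagrams -- the various projections, the maps $X^h \times \id_X$, and the map $\vartheta$ of axiom 4 -- all send diagonals to diagonals. I expect the main bookkeeping obstacle to be axiom 4, where the double limit and the definition of $\vartheta$ require careful index tracking; but the explicit formula for $\vartheta$ makes the compatibility on diagonals transparent, so no genuinely new idea is needed. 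The entire argument is, in essence, the remark that topology axioms stated in purely limit-theoretic language transfer verbatim from $\C$ to $X/\C$ whenever the monomorphism $\iota$ and all the structure morphisms from $X$ are compatible with the ambient diagonals, and axiom 1 for $C$ ensures exactly this compatibility.
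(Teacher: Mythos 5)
Your proposal is correct and follows essentially the same route as the paper's proof: verify each axiom by post-composing with the relevant monomorphism $\iota$ into a power of $X$ and observing that the transition maps ($X^h\times\id_X$, the projections, $\vartheta$) are themselves morphisms in $X/\C$, i.e.\ preserve the diagonals. The paper's argument is just a terser version of the same bookkeeping.
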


\begin{proof}
The first topology axiom tells us that we have a monomorphism $C(P,X) \to X^P \times X$ in $X/\C$. For a cofinal map $Q \to P$ the induced morphism $C(P,X) \to C(Q,X)$ is a morphism in $X/\C$, i.e.\ 
\[\begin{tikzcd}[column sep=10pt]
 & X \ar{dr} \ar{dl} & \\
C(P,X) \ar{rr} && C(Q,X)
\end{tikzcd}\]
commutes, since this can be checked after composing with $C(Q,X) \hookrightarrow X^Q \times X$ and then using that $X^P \times X \to X^Q \times X$ is a morphism in $X/\C$. The third axiom follows in a similar way, since the fiber product $F$ there is built from morphisms in $X / \C$ and since $\pr_2 : F \to X^P \times X$ is a morphism in $X / \C$. The fourth axiom also follows since the morphism $\vartheta$ there is clearly a morphism in $X / \C$.
\end{proof}
 
We can dualize \cref{topob} and obtain the following notion of a cotopology (which is not related to the eponymous cotopological spaces studied in point-set topology \cite{aarts}, and also not to $\Q$-cotopological spaces studied in \cite{zhang}).
 
\begin{defi}
The category of \emph{cotopological space objects} in a cocomplete category $\C$ is defined by
\[\CoTop(\C) \coloneqq \Top(\C^{\op})^{\op}.\]
Thus, a \emph{cotopology} on an object $X \in \C$ consists of epimorphisms
\[(P \otimes X) \sqcup X \twoheadrightarrow C(P,X)\]
for all directed sets $P$ subject to four axioms (dual to those in \cref{topob}), where $P \otimes X$ denotes the copower $\coprod_{p \in P} X$.
\end{defi}

\begin{rem} \label{coobservations}
Each item from \cref{observations} can be dualized: The axioms of a cotopology are equivalent to the condition that for every $T \in \C$ the maps
\[\Hom(C(P,X),T) \hookrightarrow \Hom(( P \otimes X) \sqcup X,T) = \Hom(X,T)^P \times \Hom(X,T)\]
define a usual topology on the set $\Hom(X,T)$. Thus, $\Hom(X,-) : \C \to \Set$ lifts to a functor (which we also denote by)
\[\Hom(X,-) : \C \to \Top.\]
We notice that, in contrast to algebraic theories, such a lift is generally \emph{not} enough to define a cotopology on $X$. It should be a \emph{continuous} functor $\C \to \Top$, which is not automatic since $\Top \to \Set$ does not reflect limits; see also \cref{cotopset} below. Every object $X \in \C$ can be equipped with the trivial cotopology, consisting of the identities of $(P \otimes X) \sqcup X$. The resulting topology on each set $\Hom(X,Y)$ is the trivial topology. Also, $\CoTop(\C)$ is cocomplete, and the trivial cotopology induces a functor $\C \to \CoTop(\C)$ that is left adjoint to the forgetful functor. Any cocontinuous functor $\C \to \D$ induces a cocontinuous functor $\CoTop(\C) \to \CoTop(\D)$.
\end{rem}

\begin{ex} \label{Pinfty}
Let $(P,\leq)$ be a directed set. We endow the set $P \cup \{\infty\}$ (where $\infty$ is any element not contained in $P$) with the following topology: Every subset of $P$ is open (so we do \emph{not} use the order topology), and the sets of the form $P_{\geq p} \cup \{\infty\}$ (where $p \in P$) are a local base at $\infty$. Notice that for a topological space $T$ a continuous map $P \to T$ is the same as a $P$-indexed net in $T$, and that a continuous extension $P \cup \{\infty\} \to T$ is the same as a limit of that net (namely, the image of $\infty$).
\[\begin{tikzcd}[column sep=50pt, row sep=10pt]
P \ar{dr}{\text{net}} \ar{dd}[swap]{\subseteq} & \\
& T \\
P \cup \{\infty\} \ar{ur}[swap]{\text{\!\!convergent net}} &
\end{tikzcd}\]
This categorical characterization of limits is interesting in its own right. There is a canonical epimorphism $P \sqcup 1 \twoheadrightarrow P \cup \{\infty\}$, where $P \sqcup 1$ has the discrete topology, i.e.\ it is actually the copower $(P \sqcup 1) \otimes 1$. These epimorphisms constitute a cotopology on $1 \in \Top$. In fact, the induced topology on the space $\Hom(1,T) \cong T$ is just the topology of $T$, and the cotopology axioms follow by \cref{coobservations}. For example, any cofinal map $P \to Q$ extends to a continuous map $P \cup \{\infty\} \to Q \cup \{\infty\}$ since continuity at $\infty$ is precisely the cofinality of $ P \to Q$.
\end{ex}

We end this section with two classification results.

\begin{prop} \label{cotopset}
Let $X$ be a set. Then the trivial cotopology from \cref{coobservations} is the only cotopology on $X$. Hence, the functor $\Set \to \CoTop(\Set)$ from \cref{coobservations} is an isomorphism of categories.
\end{prop}

\begin{proof}
Let us first assume that $X=1$ is a terminal set. A cotopology on $1$ gives a lift of the functor $\Hom(1,-) \cong \id : \Set \to \Set$ to a continuous functor $\Set \to \Top$. Thus, every set $X$ is equipped with a topology, and every map $X \to Y$ becomes continuous. An example is the trivial topology on each set $X$, which in fact comes from the trivial cotopology on $1$. If, however, there is some set $X$ whose those topology is not trivial, there is some non-empty proper open subset $U \subseteq X$. Then every set $Y$ is equipped with the discrete topology: If $V \subseteq Y$ is any subset, there is a map $f : Y \to X$ with $f^{-1}(U)=V$, because we can just map all the elements of $V$ to some fixed element of $U$ and all the elements of $Y \setminus V$ to some fixed element of $X \setminus U$. Since $f$ is continuous, $V$ is open. But since the discrete topology does not give a continuous functor $\Set \to \Top$, this is not induced by a cotopology on $1$. This proves the claim for the case $X=1$.

The general case can be reduced to this as follows. If $X$ is any set with a cotopology, then by (the dual of) \cref{topslice} this defines a cotopology on $\id_X : X \to X$ in the slice category $\Set / X$. There is an equivalence of categories $\Set / X \to \Set^X$ mapping $ f : Y \to X$ to $(f^{-1}(x))_{x \in X}$, in particular mapping $\id_X$ to the family of terminal sets $(\{x\})_{x \in X}$, and the equivalence extends to an equivalence
\[\CoTop(\Set/X) \simeq \CoTop(\Set^X) \cong \CoTop(\Set)^X.\]
This means that the cotopology on $X$ is just the coproduct of certain cotopologies on $\{x\}$, where $x \in X$. But since these are trivial, so is the cotopology on $X$, and we are done.
\end{proof}

\begin{defi} \label{topydef}
A \emph{topological topology} $(X,\tau)$ as defined by Isbell \cite{isbell3} consists of a topological space $X$ and a topology $\tau$ on its set $\O(X)$ of open subsets such that the union and intersection operators
\[{\textstyle \bigcup} : \O(X)^I \to \O(X)\]
\[{\textstyle \bigcap} : \O(X)^J \to \O(X)\]
are continuous with respect to $\tau$ for every set $I$ and every finite set $J$. (Thus, $\O(X)$ becomes a topological frame.) A morphism $(X,\tau) \to (Y,\sigma)$ of topological topologies is a continuous map $f : X \to Y$ such that the induced map $f^* : \O(Y) \to \O(X)$ is a continuous map $ f^* : (\O(Y),\sigma) \to (\O(X),\tau)$. We obtain a category $\Top'$ with a forgetful functor $\Top' \to \Top$, $(X,\tau) \mapsto X$. The category $\Top'$ has been studied by Pedicchio \cite{pedicchio}.
\end{defi}

\begin{thm} \label{topychar}
There is an equivalence $\CoTop(\Top) \simeq \Top'$ over $\Top$.
\end{thm}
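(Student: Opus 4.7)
The plan is to exhibit mutually inverse functors $\Phi \colon \CoTop(\Top) \to \Top'$ and $\Psi \colon \Top' \to \CoTop(\Top)$, each commuting with the forgetful functors to $\Top$. The pivot is the dual of observation~(5) in \cref{observations}: a cotopology on $X \in \Top$ is the same data as a continuous lift $H \colon \Top \to \Top$ of the representable functor $\Hom(X,-) \colon \Top \to \Set$.

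For $\Phi$, given a cotopology on $X$ with associated lift $H$, I would set $\tau \coloneqq H(\IS)$, a topology on $\Hom(X,\IS) \cong \mathcal{O}(X)$. Since $H$ preserves products and sends continuous maps to continuous maps, applying $H$ to the continuous operations $\bigcup \colon \IS^I \to \IS$ and $\bigcap \colon \IS^J \to \IS$ (for finite $J$) yields continuous union and intersection on $(\mathcal{O}(X),\tau)$, so $\Phi(X,C) \coloneqq (X,\tau) \in \Top'$. For $\Psi(X,\tau)$ I would equip $H(Y) \coloneqq \Hom(X,Y)$ with the initial topology with respect to the maps $\epsilon_V \colon f \mapsto f^{-1}(V)$ (for $V \in \mathcal{O}(Y)$) landing in $(\mathcal{O}(X),\tau)$. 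Postcomposition by $g \colon Y \to Z$ is continuous by the identity $\epsilon_W \circ g_\ast = \epsilon_{g^{-1}W}$, giving a functor $H \colon \Top \to \Top$. Continuity of $H$ (preservation of products and equalizers) rests on the identities $\epsilon_{V_1 \cap V_2} = \bigcap \circ (\epsilon_{V_1}, \epsilon_{V_2})$ and $\epsilon_{\bigcup V_i} = \bigcup \circ (\epsilon_{V_i})_i$: together with continuity of $\bigcap$ and $\bigcup$ in $(\mathcal{O}(X),\tau)$, these collapse the initial topology generated by all opens to the one generated by any subbasis of a limit space, which then agrees with the expected product or equalizer topology on $\Hom(X,\prod Y_i)$, $\Hom(X,\mathrm{eq}(\ldots))$, and so on. This is precisely where the topological-topology axioms enter.

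For $\Phi\Psi \cong \id$, evaluation of the constructed $H$ on $\IS$ leaves only the nontrivial map $\epsilon_{\{1\}}$, which is the tautological bijection $\Hom(X,\IS) \cong \mathcal{O}(X)$, so the initial topology on $\Hom(X,\IS)$ is precisely $\tau$. For $\Psi\Phi \cong \id$, both the given lift $H$ and the one freshly built from $\tau = H(\IS)$ have the same underlying set-valued functor, and continuity of each $H(\chi_V) \colon H(Y) \to (\mathcal{O}(X),\tau)$ shows that the topology on $H(Y)$ is at least as fine as the initial one. The reverse inequality will be the main obstacle: the source $\{\chi_V \colon Y \to \IS\}_{V \in \mathcal{O}(Y)}$ realizes a sober $Y$ as a regular subspace of $\IS^{\mathcal{O}(Y)}$, and $H$, being continuous, preserves this regular monomorphism, forcing $H(Y)$ to inherit the subspace—hence the initial—topology from $(\mathcal{O}(X),\tau)^{\mathcal{O}(Y)}$. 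To extend to non-sober $Y$, I would either analyze the soberification map $Y \to Y_{\mathrm{sob}}$ and show that $H$ factors compatibly, or argue that both candidate topologies on $\Hom(X,Y)$ depend only on the frame $\mathcal{O}(Y)$, so agreement on the sober case propagates. Functoriality on morphisms and commutation with the forgetful functors to $\Top$ are straightforward from the constructions.
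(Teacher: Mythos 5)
Your functor $\Phi$ coincides with the paper's construction, but your $\Psi$ rests on a claim that is not available: you take as ``pivot'' that a cotopology on $X \in \Top$ \emph{is} the same data as a continuous lift $H \colon \Top \to \Top$ of $\Hom(X,-)$. That is not the dual of observation (5) of \cref{observations}. A cotopology is by definition a family of objects $C(P,X)$ of $\Top$ together with epimorphisms $(P \otimes X) \sqcup X \twoheadrightarrow C(P,X)$ satisfying four axioms; \cref{coobservations} only says that, \emph{for given such data}, the axioms can be tested on the induced topologies on the sets $\Hom(X,T)$, and it explicitly warns that a lift of $\Hom(X,-)$ is ``generally not enough''. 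So even after you have built your continuous functor $H$ (that part, via $\epsilon_{V_1 \cap V_2} = \bigcap \circ (\epsilon_{V_1},\epsilon_{V_2})$ and $\epsilon_{\bigcup_i V_i} = \bigcup \circ (\epsilon_{V_i})_i$ together with continuity of $\bigcup$ and $\bigcap$, is essentially sound), you have not yet produced an object of $\CoTop(\Top)$: the spaces $C(P,X)$ and the epimorphisms onto them are missing. Supplying them is precisely the nontrivial content of the paper's proof, which builds $C(P,X)$ explicitly as the set $(P \sqcup \{\infty\}) \times |X|$ with the topology whose opens $U$ are those with every $\iota_p^{-1}(U)$ open and $(\iota_p^{-1}(U))_{p \in P} \to \iota_{\infty}^{-1}(U)$ in $(\O(X),\tau)$, and then verifies the four axioms by hand. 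Your route could be repaired --- a continuous $H\colon\Top\to\Top$ has a left adjoint $G$ by the dual Special Adjoint Functor Theorem ($\Top$ is complete, wellpowered, and the indiscrete two-point space is a cogenerator), and one may set $C(P,X) \coloneqq G(P \cup \{\infty\})$ and transport the cotopology of $1 \in \Top$ from \cref{Pinfty} along the cocontinuous $G$ --- but none of this appears in your write-up, and it is an argument of a different nature from what you describe.

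The second gap is in $\Psi\Phi \cong \id$. The map $Y \to \IS^{\O(Y)}$ is an embedding (hence a regular monomorphism) exactly when $Y$ is $T_0$; sobriety is not the relevant condition. Neither of your suggested extensions to general $Y$ works as stated: the soberification $Y \to Y_{\mathrm{sob}}$ is not injective when $Y$ is not $T_0$, so $H(Y) \to H(Y_{\mathrm{sob}})$ cannot be an embedding; and the assertion that the topology of $H(Y)$ ``depends only on the frame $\O(Y)$'' is essentially what you are trying to prove. A workable patch is to embed $Y$ into $\IS^{\O(Y)} \times N$ with $N$ indiscrete on $|Y|$ and reduce to showing that $H(\mathbf{2})$ is indiscrete, which needs the topological-CABA argument the paper only gives later in \cref{toptopcont} --- again a genuine extra step. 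The paper's proof sidesteps all of this: by \cref{cotopset} the underlying set-level cotopology is trivial, so only the topology of each single space $C(P,X)$ has to be compared with the constructed one, and that is a short computation with characteristic functions.
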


\begin{proof}
Let $\IS$ denote the Sierpinski space with underlying set $\{0,1\}$ and open sets $\emptyset,\{1\},\{0,1\}$. Any cotopology $C$ on $X \in \Top$ induces a continuous functor $\Hom(X,-) : \Top \to \Top$, so that in particular the set $\Hom(X,\IS)$ has a topology. We can transport it via the bijection $\O(X) \to \Hom(X,\IS)$, $U \mapsto \chi_U$ to a topology $\tau$ on $\O(X)$. If $I$ is a set, then the continuity of $\Hom(X,-) : \Top \to \Top$ yields $\O(X)^I \cong \Hom(X,\IS)^I \cong \Hom(X,\IS^I)$ in $\Top$, and the union operator $\bigcup : \O(X)^I \to \O(X)$ is represented by the continuous map $\IS^I \to \IS$ mapping $x \mapsto 1$ if and only if $x_i=1$ for some $i \in I$. Hence, $\bigcup : \O(X)^I \to \O(X)$ is continuous. If $J$ is a finite set, the same method shows that the intersection operator $\bigcap : \O(X)^J \to \O(X)$ is continuous by using the continuous map $\IS^J \to \IS$ that maps $x \mapsto 1$ if and only if $x_j=1$ for all $j \in J$. Therefore, $(X,\tau)$ is a topological topology. Explicitly, for open subsets $U_p \subseteq X$ ($p \in P$) and $U \subseteq X$ we have $(U_p)_{p \in P} \to U$ in $(\O(X),\tau)$ iff the corresponding continuous map $(P \otimes X) \sqcup X \to \IS$ extends to a continuous map on $C(P,X)$. It is obvious that every morphism of cotopologies is also a morphism of the associated topological topologies.

Conversely, let $(X,\tau)$ be a topological topology. Let $|~| : \Top \to \Set$ be the forgetful functor. For a directed set $P$ consider the set $(P \sqcup \{\infty\}) \times |X|$ together with the inclusion maps $\iota_p : |X| \to (P \sqcup \{\infty\}) \times |X|$ for $p \in P \sqcup \{\infty\}$. We define the following topology $\sigma$ on it: A subset $U$ is open iff we have $\iota_p^{-1}(U) \in \O(X)$ for every $p \in P \sqcup \{\infty\}$ and
\[(\iota_p^{-1}(U))_{p \in P} \to \iota_{\infty}^{-1}(U)\]
in the topological space $(\O(X),\tau)$. These open subsets are, in fact, closed under arbitrary unions and finite intersections, since each operator $\iota_p^{-1}$ is compatible with them and since these operations are continuous on $(\O(X),\tau)$ by assumption. This defines a topological space $C(P,X) \coloneqq ((P \sqcup \{\infty\}) \times |X|,\sigma)$. By construction each $\iota_p : X \to C(P,X)$ is continuous, so that they induce a continuous map $(P \sqcup 1) \otimes X \to C(P,X)$, which is in fact bijective (but not necessarily an isomorphism!). We claim that it is a cotopology: Since this is certainly true after applying the forgetful functor to $\Set$ (there we just have the trivial cotopology), we just need to check that the induced maps of sets in (the dual of) \cref{topob} are actually continuous. For the first axiom, we need to check that the codiagonal $C(P,X) \to X$, $(p,x) \mapsto x$ is continuous. If $V \subseteq |X|$ is open, its preimage $ U \coloneqq (P \sqcup \{\infty\}) \times V$ is open since $\iota_p^{-1}(U)=V$ for all $p$, and $(V)_{p \in P} \to V$. For the second axiom, we need to check that for every cofinal map $h : P \to Q$ the induced map $C(h,X) : C(P,X) \to C(Q,X)$, $(p,x) \mapsto (h(p),x)$ (with $h(\infty) \coloneqq \infty$) is continuous. If $U \subseteq |C(Q,X)|$ is open, then
\[\iota_p^{-1}\bigl(C(h,X)^{-1}(U)\bigr) = \iota_{h(p)}^{-1}(U)\]
is open for each $p \in P \sqcup \{\infty\}$, and we have
\[(\iota_{h(p)}^{-1}(U))_{p \in P} \to \iota_{\infty}^{-1}(U)\]
in $(\O(X),\tau)$ since this is a subnet of $\smash{(\iota_{q}^{-1}(U))_{q \in Q}}$, which shows that $C(h,X)^{-1}(U)$ is open. The other two axioms follow in the same way, each of them using the corresponding fact about convergent nets recalled in \cref{netchar}. (In fact, there is a much more conceptual argument that this works, but clearly a direct proof is also possible.) It is obvious that every morphism of topological topologies is also a morphism of the associated cotopologies.

The functors are pseudo-inverse to each other: Starting with a topological topology $(X,\tau)$, define $C(P,X)$ as above. For a family of open subsets $U_p \subseteq X$, $U_{\infty} \subseteq X$ we have $(U_p)_{p \in P} \to U_{\infty}$ with respect to $\tau$ iff $\{(p,x) : p \in P \sqcup \{\infty\},x \in U_p\}$ is open in $C(P,X)$ iff the map $C(P,X) \to \IS$ defined by $(p,x) \mapsto \chi_{U_p}(x)$ is continuous. This shows that $(X,\tau)$ is the topological topology associated to $C$. 
 
Conversely, let us start with a cotopology $C$ on $X$, define a topological topology $(X,\tau)$ as above. By \cref{cotopset} $C$ becomes the trivial cotopology after applying the forgetful functor $\Top \to \Set$, so that $(P \sqcup 1) \otimes X \to C(P,X)$ is a bijective continuous map. We only need to verify that the topology on $C(P,X)$ is the correct one. Since each $\iota_p : X \to C(P,X)$ is continuous, it follows that for every open set $U \subseteq C(P,X)$ the preimages $\iota_p^{-1}(U) \subseteq X$ are open as well. We need to show that $U$ is open iff
\[(\iota_p^{-1}(U))_{p \in P} \to \iota_{\infty}^{-1}(U)\]
with respect to $\tau$. By definition of $\tau$, this condition is equivalent to the continuity of the map $C(P,X) \to \IS$ defined by $\iota_p(x) \mapsto \chi_{\iota_p^{-1}(U)}(x)$. This is just the map $\chi_U$, and we are done.
\end{proof}


\section{A limit sketch for topological spaces} \label{sec:topsketch}

It is straight forward to transform \cref{topob} into the definition of a limit graph-sketch whose $\C$-valued models are topological space objects in $\C$. But for the reasons explained in \cref{compfree} and in order to apply \cref{main} we need to find a category-sketch instead. It is possible to do this by hand and define the sketch in terms of abstract objects and morphisms, but the definition of all the possible compositions is quite tedious, and in fact many more morphisms than those indicated by \cref{topob} appear. It is more efficient to define the sketch using the category of topological spaces itself.
 
\begin{defi} \label{topsketch}
We define a full subcategory $\E \subseteq \Top$ and a colimit sketch $(\E,\S)$ as follows:
\begin{enumerate}
\item Every discrete space $I$ belongs to $\E$, and the coproduct cocone $(i : 1 \to I)_{i \in I}$ belongs to $\S$.
\item For every directed set $P$ the space $P \cup \{\infty\}$ from \cref{Pinfty} belongs to $\E$, and the epimorphism cocone (\cref{monocone}) of $P \sqcup 1 \twoheadrightarrow P \cup \{\infty\}$ belongs to $\S$.
\item For every directed set $P$ and every family of cofinal maps $h = (h_Q : R_Q \to Q)$ indexed by cofinal subsets $Q \subseteq P$ and every colimit cocone
\[\begin{tikzcd}
& R_Q \cup \{\infty\} \ar{dr} \\
R_Q \sqcup 1 \ar{ur} \ar{dr} && F(P,h), \\
& P \sqcup 1 \ar{ur} & 
\end{tikzcd}\]
where $Q$ varies over all cofinal subsets of $P$, we have $F(P,h) \in \E$, and the colimit cocone belongs to $\S$. (We may also write $F(P,h)$ as a pushout of coproducts as in (the dual of) \cref{topob}, but we would like to keep the amount of necessary colimits here minimal.)
\item For every directed set $P$ and every family $Q = (Q_p)_{p \in P}$ of directed sets and every colimit cocone
\[\begin{tikzcd}
& Q_p \cup \{\infty\} \ar{dr} & \\
P \ar{ur}{\infty} \ar{dr} && G(P,Q), \\
 & P \cup \{\infty\} \ar{ur}
\end{tikzcd}\]
where $p$ varies over $P$, we have $G(P,Q) \in \E$, and the colimit cocone belongs to $\S$. (Again, we could write $G(P,Q)$ as a pushout of coproducts.)
\end{enumerate}
These are all objects of $\E$ resp.\ all cocones in $\S$. Finally, we define the \emph{limit sketch of topological spaces} $\T$ by $\S^{\op}$. Since $\S$ is realized, $\T$ is realized as well. The following result justifies our name for $\T$.
\end{defi}

\begin{thm} \label{topsketchthm}
Let $\T$ be the limit sketch of topological spaces from \cref{topsketch}. Then for every complete category $\C$ we have a natural equivalence of categories
\[\Mod_{\C}(\T) \simeq \Top(\C).\]
In particular, we have $\Mod(\T) \simeq \Top$.
\end{thm}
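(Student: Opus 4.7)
The plan is to construct mutually pseudo-inverse functors $\Phi: \Mod_\C(\T) \to \Top(\C)$ and $\Psi: \Top(\C) \to \Mod_\C(\T)$. For $\Phi$, I send a model $M: \E^{\op} \to \C$ to $(X, C)$ with $X \coloneqq M(1)$ and $C(P,X) \coloneqq M(P \cup \{\infty\})$. Condition (1) of \cref{topsketch} turns each coproduct cocone on a discrete space into a product cone under $M$, giving $M(I) \cong X^I$, so in particular $M(P \sqcup 1) \cong X^P \times X$. Condition (2) turns the epimorphism cocone of \cref{monocone} into a monomorphism cone, making $C(P,X) \hookrightarrow X^P \times X$ a monomorphism. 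The four axioms of \cref{topob} are then extracted by applying $M$ to specific continuous maps of $\E$: the unique map $P \cup \{\infty\} \to 1$ gives the constant-net morphism; the continuous extension $\tilde h: Q \cup \{\infty\} \to P \cup \{\infty\}$ of a cofinal map $h: Q \to P$ (\cref{Pinfty}) gives the subnet morphism; and conditions (3), (4) of \cref{topsketch} identify $M(F(P,h))$ and $M(G(P,Q))$ with the pullbacks $F$ and $G$ appearing in the locality and iteration axioms, from which the required lifts into $C(P,X)$ and $C(P \times \prod_{p} Q_p, X)$ are produced via the corresponding morphisms in $\E \subseteq \Top$.

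Conversely, for $\Psi$, given $(X,C) \in \Top(\C)$, I would define $M \coloneqq \Psi(X,C): \E^{\op} \to \C$ on objects by the same formulas, with $M(F(P,h))$ and $M(G(P,Q))$ taken to be the respective pullbacks. To define $M$ on morphisms, I invoke \cref{observations}(5): for each test object $T \in \C$, the monomorphisms $\Hom(T, C(P,X)) \hookrightarrow \Hom(T,X)^P \times \Hom(T,X)$ equip the set $\Hom(T,X)$ with a genuine topology in the classical sense of \cref{netchar}. Hence every continuous map $f: A \to B$ in $\E$ induces a set map $\Hom(T,M(B)) \to \Hom(T,M(A))$ natural in $T$, and the Yoneda lemma promotes this to the required morphism $M(f): M(B) \to M(A)$ in $\C$. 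Functoriality of $M$, and the model conditions that the distinguished cocones map to limit cones, are verified pointwise in $T$, where they reduce to Kelley's classical characterization.

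The pseudo-inverse property of $\Phi$ and $\Psi$ is then routine bookkeeping: on objects, both composites restore the formulas $X = M(1)$ and $C(P,X) = M(P \cup \{\infty\})$, and on morphisms the Yoneda identification is tautological. Naturality follows from the fact that all constructions in \cref{topsketch} and \cref{topob} are made purely out of limits in $\C$. I expect the main technical hurdle to be the first half of the plan, namely checking in full detail that the pushouts $F(P,h)$ and $G(P,Q)$, glued in $\Top$ from the spaces of \cref{Pinfty}, genuinely carry in $\E$ the continuous maps needed to realize the locality and iteration lifts of \cref{topob} under any $M$; conceptually the plan is to mirror Kelley's proof of \cref{netchar} one level up, inside the Yoneda-reduction to $\Set$.
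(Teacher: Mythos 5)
Your proposal is correct and follows essentially the same route as the paper: the forward direction reads off $X = M(1)$ and $C(P,X) = M(P\cup\{\infty\})$ and extracts the four axioms of \cref{topob} by applying $M$ to the continuous maps $P\cup\{\infty\}\to 1$, $Q\cup\{\infty\}\to P\cup\{\infty\}$, $P\cup\{\infty\}\to F(P,h)$ and $(P\times\prod_p Q_p)\cup\{\infty\}\to G(P,Q)$, while the inverse direction rests on the identification $\Hom_\C(T,M(B))\cong\Hom_{\Top}(B,\Hom_\C(T,X))$ reduced via Yoneda to the generating objects $1$ and $P\cup\{\infty\}$, exactly as in the paper's construction of $X^T$. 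The technical point you flag at the end (that the net $P\to F(P,h)$ really converges to the image of $\infty$, so that the required lift exists) is precisely where the strengthened locality axiom from \cref{netchar} is used, and the paper handles it the same way.
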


\begin{proof}
Let $(\E,\S)$ be the colimit sketch from \cref{topsketch}. There are four types of spaces and four types of distinguished cocones. Hence, a model $M : \E^{\op} \to \C$ of $\S^{\op}$ is a functor that is defined on the four types of spaces (and their morphisms) and maps the four types of colimit cocones to limit cones in $\C$. In particular, we obtain an object $M(1) \in \C$ and for every directed set $P$ a monomorphism $M(P \cup \{\infty\}) \hookrightarrow M(P \sqcup 1) \cong M(1)^{P \sqcup 1} = M(1)^P \times M(1)$. These define a topology on $M(1)$, essentially because of the cotopological space structure on $1 \in \Top$ from \cref{Pinfty}: For the first axiom, the unique map $P \cup \{\infty\} \to 1$ induces a morphism $M(1) \to M(P \cup \{\infty\})$. For the second axiom, any cofinal map $Q \to P$ induces a continuous map $Q \cup \{\infty\} \to P \cup \{\infty\}$ and hence a morphism $M(P \cup \{\infty\}) \to M(Q \cup \{\infty\})$. For the third axiom, the continuous map $P \cup \{\infty\} \to F(P,h)$ that extends $P \sqcup 1 \to F(P,h)$ induces a morphism $M(F(P,h)) \to M(P \cup \{\infty\})$, where $M(F(P,h))$ is isomorphic to the fiber product of products as in the third axiom of \cref{topob}. For the last axiom, the continuous map $(P \times \prod_{p \in P} Q_p) \cup \{\infty\} \to G(P,Q)$ gives a morphism $M(G(P,Q)) \to M((P \times \prod_{p \in P} Q_p) \cup \{\infty\})$, where $M(G(P,Q))$ is the fiber product of products as in the last axiom of \cref{topob}. For any morphism of models $M \to M'$ the morphism $M(1) \to M'(1)$ is clearly compatible with the topologies. This defines a functor $\Mod_{\C}(\T) \to \Top(\C)$.

Conversely, let $(X,C) \in \Top(\C)$, which we abbreviate by $X$ in this proof. For every $Y \in \C$ the functor $\Hom(Y,-) : \C \to \Set$ is continuous and therefore induces a functor, also denoted by $\Hom(Y,-) : \Top(\C) \to \Top(\Set)=\Top$. We claim that for every space $T \in \E$ there is an object $X^T \in \C$ satisfying the universal property
\[\Hom_{\C}(Y,X^T) \cong \Hom_{\Top}(T,\Hom_{\C}(Y,X)).\]
This adjunction shows in particular that $T \mapsto X^T$ will be an object of $\Mod_{\C}(\T)$. For the construction, we may use the same method as in the proof of \cref{tpex} and reduce the whole problem to the cases $T=1$ and $T=P \cup \{\infty\}$ for directed sets $P$. One easily checks that $X^1 \coloneqq X$ does the job, and the description of continuous maps on $P \cup \{\infty\}$ in \cref{Pinfty} shows that $X^{P \cup \{\infty\}} \coloneqq C(P,X)$ does the job. Clearly, any morphism $(X,C) \to (Y,D)$ induces morphisms $X^T \to Y^T$ which are natural in $T \in \E$. This finishes the construction of the functor $\Top(\C) \to \Mod_{\C}(\T)$.

The composition $\Top(\C) \to \Mod_{\C}(\T) \to \Top(\C)$ is the identity: If $(X,C) \in \Top(\C)$, the underlying object of its image is $X^1 = X$ and is equipped with the topology (what else?) $C$. To calculate the composition $\Mod_{\C}(\T) \to \Top(\C) \to \Mod_{\C}(\T)$, let $M : \E^{\op} \to \C$ be a $\C$-valued model of $\S^{\op}$. For $T \in \E$ there is a continuous map
\[T \cong \Hom_{\E}(1,T) \to \Hom_{\C}(M(T),M(1)),\]
inducing a $\C$-morphism
\[M(T) \to M(1)^T,\]
which is clearly natural in $T$. In order to show that it is an isomorphism, it suffices to check the cases $T=1$ and $T=P \cup \{\infty\}$, since these two types of spaces generate every object of $\E$ under distinguished colimits in $\S$ and both $M(-)$ and $M(1)^{(-)}$ are models of $\S^{\op}$. But these cases are obvious. 
\end{proof}

\begin{prop}\label{topsmall}
Let $\T$ be the limit sketch of topological spaces from \cref{topsketch}. Then every model of $\T$ is small (as defined in \cref{sec:small}).
\end{prop}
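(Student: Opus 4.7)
The plan is to apply \cref{smallchar} via the equivalence $\Mod(\T) \simeq \Top$ from \cref{topsketchthm}. Under that equivalence a $\T$-model is a topological space $X$, corresponding to the functor $M_X : \E^{\op} \to \Set$, $A \mapsto \Hom_{\Top}(A,X)$. Since every $A \in \E$ has enough points (there are abundant morphisms $1 \to A$ in $\E$), naturality forces any transformation $\alpha : M_X \to M_Y$ to satisfy $\alpha_A(f) = \alpha_1 \circ f$, so morphisms of $\T$-models are exactly continuous maps $X \to Y$. It therefore suffices to produce, for each space $X$, a small full subcategory $\E' \subseteq \E^{\op}$ such that the restriction map $\Hom_{\Top}(X,Y) \to \Hom(M_X|_{\E'},M_Y|_{\E'})$ is bijective for every $Y$.

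The key ingredient is a universal convergent net at each point. For $s \in X$, set $P_s \coloneqq \{(U,x) : U \text{ open with } s \in U,\, x \in U\}$, directed by $(U,x) \leq (U',x')$ iff $U' \subseteq U$, and define $\phi_s : P_s \to X$ by $(U,x) \mapsto x$. Then $\phi_s \to s$, and a short standard argument shows that $f : X \to Y$ is continuous at $s$ if and only if $f \circ \phi_s \to f(s)$ in $Y$. Since $|P_s| \leq 2^{|X|}$, the spaces $P_s \cup \{\infty\}$ (as in \cref{Pinfty}) have only a set of isomorphism classes as $s$ varies over $X$. I take $\E'$ to be the full subcategory of $\E^{\op}$ on $\{1\} \cup \{P_s \cup \{\infty\} : s \in X\}$, which is small.

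Bijectivity of the restriction is then mostly formal. Given $\alpha : M_X|_{\E'} \to M_Y|_{\E'}$, naturality with respect to the point-morphisms $p \in P_s \cup \{\infty\}$ (i.e.\ morphisms $P_s \cup \{\infty\} \to 1$ in $\E^{\op}$) forces $\alpha_{P_s \cup \{\infty\}}(\bar x)(p) = \alpha_1(\bar x(p))$ for every continuous $\bar x : P_s \cup \{\infty\} \to X$. Apply this to the canonical extension $\bar\phi_s$ of $\phi_s$ (with $\bar\phi_s(\infty) \coloneqq s$); since $\alpha_{P_s \cup \{\infty\}}(\bar\phi_s)$ lies in $\Hom_{\Top}(P_s \cup \{\infty\},Y)$ and hence is continuous, continuity at $\infty$ reads $\alpha_1 \circ \phi_s \to \alpha_1(s)$, so by the universal-net characterization $\alpha_1$ is continuous at every $s$. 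The resulting continuous map $\alpha_1 : X \to Y$ induces a natural transformation $\tilde\alpha : M_X \to M_Y$ with $\tilde\alpha|_{\E'} = \alpha$, proving surjectivity; injectivity is immediate because $\tilde\alpha$ is determined by its $1$-component.

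The main obstacle is cardinality control: since $\E$ contains $P \cup \{\infty\}$ for \emph{every} directed set $P$, a priori probing $M_X$ requires arbitrarily large directed sets. The universal-net construction sidesteps this by producing one canonical convergent net per point of $X$, uniformly bounded in size by $2^{|X|}$, and it is the only place in the argument where a specific topological input is needed.
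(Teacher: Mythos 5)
Your proof is correct and is essentially the paper's argument: both invoke \cref{smallchar} with a small full subcategory consisting of $1$ and the spaces $D \cup \{\infty\}$ for directed sets $D$ of cardinality bounded in terms of the underlying set of the model, and both rest on the classical fact that continuity of a map out of $X$ is detected by nets built from the neighbourhood systems of points of $X$. The only difference is cosmetic: the paper argues by contradiction, using the axiom of choice to pick a net indexed by the neighbourhood filter of a point, whereas you use the canonical universal net on the set of pairs $(U,x)$ with $U$ an open neighbourhood of $s$ and $x \in U$, which makes the argument direct and choice-free.
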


\begin{proof}
The rough idea of the proof is that net continuity of a map between two topological spaces does not have to be tested for arbitrary large directed sets, for which we just have to recycle the argument from \cite[Chapter 3, Theorem 1]{kelley}. Consider a $\T$-model $M : \E^{\op} \to \Set$ and the full subcategory $\E'$ of $\E$ that has as objects $1$ and the $D \cup \{\infty\}$ where $(D,\leq)$ is a directed set whose underlying set $D$ is a subset of the power set $P(M(1))$. Then $\E'$ is small. We claim that the restriction map $\Hom(M,N) \to \Hom(M|_{\E'^{\op}},N|_{\E'^{\op}})$ is bijective for every $\T$-model $N$, which is enough by \cref{smallchar} to prove the claim. Since $1 \in \E'$ and $\Hom(M,N) \to \Hom(M(1),N(1))$ is injective, the restriction map is injective. Now consider a morphism $M|_{\E'^{\op}} \to N|_{\E'^{\op}}$. Interpreting $M$ and $N$ as topological spaces with underlying sets $M(1)$, $N(1)$, this means that we are given a map $f : M(1) \to N(1)$ such that for all directed sets $D \subseteq P(M(1))$ and any $D$-indexed convergent net $(m_d)_{d \in D} \to m$ in $M(1)$ we have $(f(m_d))_{d \in D} \to f(m)$ in $N(1)$. We need to show that $f$ is continuous. If not, there is some $m \in M(1)$ and some open neighborhood $V$ of $f(m)$ such that for every open neighborhood $U$ of $m$ we have $f(U) \not\subseteq V$. The set $D \subseteq P(M(1))$ of open neighborhoods of $m$ is directed via reverse inclusion, and the axiom of choice allows us to choose a net $(m_U)_{U \in D}$ with $m_U \in U$ and $f(m_U) \notin V$. But then we have $(m_U)_{U \in D} \to m$ and $(f(m_U))_{U \in D} \not\to f(m)$. This contradiction shows that $f$ is indeed continuous (assuming the law of the excluded middle). And this means that $f$ is actually induced by a morphism $M \to N$.
\end{proof}

\begin{thm} \label{topue}
If $\C$ is any cocomplete category, then
\[\Hom_{\c}(\Top,\C) \simeq \CoTop(\C).\]
Hence, $\Top$ is the universal cocomplete category with a cotopological space object.
\end{thm}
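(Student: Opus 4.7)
The plan is to assemble the pieces set up in the preceding two sections and invoke the main theorem directly. First I would verify the hypotheses of \cref{main} for the limit sketch $\T$ of topological spaces from \cref{topsketch}: realizedness is built into its definition (the colimit sketch $\S$ from which $\T$ arises consists of genuine colimit cocones in $\Top$, so its opposite $\T$ is automatically a realized limit sketch), and the smallness of every $\T$-model is exactly the content of \cref{topsmall}. Consequently \cref{main} yields a natural equivalence
\[\Hom_{\c}(\Mod(\T),\C) \simeq \Mod_{\C}(\T^{\op}).\]

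Next I would rewrite both sides in terms of (co)topological space objects. By \cref{topsketchthm} the left-hand side is $\Hom_{\c}(\Top,\C)$. For the right-hand side I would exploit the definition $\T \coloneqq \S^{\op}$, so that $\T^{\op}$ is canonically identified with the original colimit sketch $\S$, and then apply the identity $\Mod_{\C}(\T^{\op}) \cong \Mod_{\C^{\op}}(\T)^{\op}$ recalled in \cref{sketchdef}. Since $\C$ is cocomplete, $\C^{\op}$ is complete, so \cref{topsketchthm} applies to $\C^{\op}$ to give $\Mod_{\C^{\op}}(\T) \simeq \Top(\C^{\op})$. Taking opposites and unwinding the definition $\CoTop(\C) \coloneqq \Top(\C^{\op})^{\op}$ produces $\Mod_{\C}(\T^{\op}) \simeq \CoTop(\C)$, and chaining everything yields the desired equivalence.

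For the universal-property reformulation I would simply unwind the correspondence supplied by \cref{main}: a cocontinuous functor $F : \Top \to \C$ is sent to the $\T^{\op}$-model $F \circ Y$, where $Y : \E^{\op} \to \Top$ is the Yoneda model of \cref{comod}, which under the above identifications is precisely the canonical internal cotopological space object in $\Top$ witnessing its universality; the inverse equivalence sends a cotopological space object $N$ in $\C$ to the tensor-product functor $N \otimes_{\T} (-) : \Top \to \C$.

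The only real obstacle is bookkeeping: carefully tracking the two layers of dualization — the one inside the sketch (between $\T$ and $\T^{\op} = \S$) and the one hidden in the definition of $\CoTop$ (between $\C$ and $\C^{\op}$) — to ensure they compose in the right order and that the invocation of \cref{topsketchthm} on $\C^{\op}$ is legitimate. Once that is organized, no new argument is required beyond the infrastructure already built.
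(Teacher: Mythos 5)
Your proposal is correct and follows essentially the same route as the paper: verify realizedness and smallness for $\T$, apply \cref{main} together with \cref{topsketchthm} on both $\Set$ and $\C^{\op}$, and unwind the duality $\Mod_{\C}(\T^{\op}) \cong \Mod_{\C^{\op}}(\T)^{\op} \simeq \Top(\C^{\op})^{\op} = \CoTop(\C)$. The paper's proof is exactly this chain of equivalences, so no further comment is needed.
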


\begin{proof}
By \cref{topsketchthm}, \cref{topsmall} as well as the general universal property in \cref{main} we have
\[\Hom_{\c}(\Top,\C) \simeq \Hom_{\c}(\Mod(\T),\C) \simeq \Mod_{\C}(\T^{\op}) \cong \Mod_{\C^{\op}}(\T)^{\op} \simeq \Top(\C^{\op})^{\op}. \qedhere\]
\end{proof}

\begin{rem}\label{explicittop}
Let us make the equivalence of \cref{topue} more explicit, following its proof. The universal cotopological space object in $\Top$ is the space $1$ with the cotopology consisting of the continuous maps $(P \sqcup 1) \otimes 1 \twoheadrightarrow P \cup \{\infty\}$ for directed sets $P$, see \cref{Pinfty}. Thus, any cocontinuous functor $F : \Top \to \C$ produces a cotopological space object in $\C$, namely $F(1)$ with the morphisms
\[(P \otimes F(1)) \sqcup F(1) \cong F((P \sqcup 1) \otimes 1) \twoheadrightarrow F(P \cup \{\infty\}).\]
Conversely, let $X$ be an object of $\C$ with a cotopology $(P \sqcup 1) \otimes X \twoheadrightarrow C(P,X)$, thus inducing topologies on the sets $\Hom(X,T)$ for $T \in \C$. For a topological space $M$ we then have a tensor product $X \otimes_{\T} M \in \C$ that is defined by the universal property that morphisms $X \otimes_{\T} M \to T$ in $\C$ correspond to continuous maps $M \to \Hom(X,T)$. Concretely, when $|M|$ denotes the underlying set of $M$, the tensor product $X \otimes_{\T} M$ is a (not necessarily regular) quotient of $X \otimes |M| = \coprod_{m \in |M|} X$ that identifies the net convergence in $M$ with the conet coconvergence in $X$. Specifically, there is a large colimit cocone
\[
\begin{tikzcd}[column sep=1pt, row sep=30pt]
& X \otimes (P \sqcup 1) \otimes |M|^{P \sqcup 1} \ar{rr}{\text{ev.}} && X \otimes |M| \ar[twoheadrightarrow]{dr} &  \\
X \otimes (P \sqcup 1) \otimes \Hom(P \cup \{\infty\},M) \ar[hookrightarrow]{ur}{\text{convergence}} \ar[twoheadrightarrow]{dr}[swap]{\text{coconvergence}} &&&& X \otimes_{\T} M, \\
& C(P,X) \otimes \Hom(P \cup \{\infty\},M) \ar[dashed]{urrr} && 
\end{tikzcd}\]
where $P$ runs through all directed sets, but this can be reduced to a small colimit exactly because of \cref{topsmall}. Then $X \otimes_{\T} - : \Top \to \C$ is a cocontinuous functor, which by definition is left adjoint to $\Hom(X,-) : \C \to \Top$. Every cocontinuous functor $\Top \to \C$ looks like that. In particular, we have seen again that $\Top$ is strongly compact.
\end{rem}

\begin{ex}
If $\C$ is a complete category, then \cref{topue} says that
\[\Hom_{\c}(\Top,\C^{\op}) \simeq \Top(\C)^{\op}.\]
For example, $\Hom_{\c}(\Top,\Set^{\op}) \simeq \Top^{\op}$ and $\Hom_{\c}(\Top,\Top^{\op}) \simeq \Top(\Top)^{\op}$. Specifically, the cocontinuous functor $\Top \to \Set^{\op}$ associated to a topological space $X$ is just the representable functor $\Hom(-,X) : \Top \to \Set^{\op}$, and a second topology on the underlying set of $X$ gives a lift to a cocontinuous functor $\Top \to \Top^{\op}$.
\end{ex}

The following classification has already been obtained by Isbell \cite[Theorem 1.3]{isbell3} with different topological arguments.
 
\begin{thm} \label{cocontop}
There is an equivalence of categories
\[\Hom_{\c}(\Top,\Set) \simeq \Set.\]
Every cocontinuous functor $\Top \to \Set$ is a coproduct of copies of the forgetful functor.
\end{thm}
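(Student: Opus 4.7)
The plan is simply to combine \cref{topue} with \cref{cotopset}. Specializing \cref{topue} to $\C = \Set$ yields a natural equivalence
\[\Hom_{\c}(\Top,\Set) \simeq \CoTop(\Set),\]
while \cref{cotopset} provides the second equivalence $\CoTop(\Set) \simeq \Set$, realized by sending a set $X$ to itself equipped with its unique (namely trivial) cotopology. Composing these two equivalences establishes the first claim.

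For the explicit description of the corresponding cocontinuous functors, I would trace the construction in \cref{explicittop}. A set $X$ corresponds to the cotopological space object $X \in \Set$ with the trivial cotopology, so $C(P,X) = (P \sqcup 1) \otimes X$ for every directed set $P$. The associated cocontinuous functor is $X \otimes_{\T} (-) : \Top \to \Set$. Writing $|M|$ for the underlying set of a topological space $M$, the large colimit cocone in \cref{explicittop} becomes trivial: the \emph{coconvergence} morphisms $X \otimes (P \sqcup 1) \otimes \Hom(P \cup \{\infty\},M) \twoheadrightarrow C(P,X) \otimes \Hom(P \cup \{\infty\},M)$ are identities, so the universal quotient $X \otimes |M| \twoheadrightarrow X \otimes_{\T} M$ imposes no nontrivial identifications. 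Hence
\[X \otimes_{\T} M \;\cong\; X \otimes |M| \;\cong\; {\coprod}_{x \in X} |M|,\]
which is precisely the $X$-fold coproduct of copies of the forgetful functor $U : \Top \to \Set$ (and recovers $U$ itself when $X = 1$).

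I expect no real obstacle here: once the general machinery of \cref{topue} is available and the triviality of $\CoTop(\Set)$ is established in \cref{cotopset}, the theorem is a direct corollary. The only slightly subtle point is identifying the functor as a coproduct of copies of $U$ rather than something more elaborate, and this reduces immediately to unpacking the construction of $X \otimes_{\T} -$ under the triviality of the cotopology.
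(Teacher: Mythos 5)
Your proposal is correct and follows essentially the same route as the paper: both obtain the equivalence by combining \cref{topue} with \cref{cotopset}, and then identify the resulting functors. The only cosmetic difference is in the last step — the paper identifies $X \otimes_{\T} -$ as the left adjoint of $I \circ \Hom(X,-)$ (where $I$ is the trivial-topology functor), namely $(X \otimes -) \circ U$, whereas you unwind the colimit construction of \cref{explicittop} directly; both yield the same coproduct-of-forgetful-functors description.
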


\begin{proof}
Let $U : \Top \to \Set$ be the forgetful functor, and let $I : \Set \to \Top$ be the trivial topology functor, which is right adjoint to $U$. By \cref{cotopset} and \cref{topue} we have $\Hom_{\c}(\Top,\Set) \simeq \CoTop(\Set) \simeq \Set$. The pseudo-inverse maps a set $X$ first to the trivial cotopological set $X$ and then to the cocontinuous functor $\Top \to \Set$ that is left adjoint to $I \circ \Hom(X,-) : \Set \to \Set \to \Top$, which is $(X \otimes -) \circ U : \Top \to \Set \to \Set$.
\end{proof}

\begin{cor} \label{nocolimitsketch}
The category $\Top$ is not modeled by a colimit sketch.
\end{cor}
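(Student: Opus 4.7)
The plan is to apply the dual of \cref{nec} together with the explicit classification from \cref{cocontop}. Suppose, for contradiction, that $\Top$ were modelled by a colimit sketch. Then by \cref{nec} (applied to the colimit-sketch case) the cocontinuous functors $\Top \to \Set$ would have to be jointly conservative, i.e.\ they would jointly reflect isomorphisms.

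Next I would use \cref{cocontop}: every cocontinuous functor $F : \Top \to \Set$ is (naturally isomorphic to) a coproduct $I \otimes U$ of copies of the forgetful functor $U : \Top \to \Set$, indexed by some set $I$. Applied to a morphism $f : X \to Y$ in $\Top$, such a functor produces the map $I \otimes U(f) : I \otimes U(X) \to I \otimes U(Y)$, which is a bijection exactly when $I = \emptyset$ or $U(f)$ is a bijection of sets. Consequently the joint class of isomorphisms reflected by all cocontinuous functors $\Top \to \Set$ is precisely the class of continuous bijections.

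To finish, I would exhibit a continuous bijection that is not a homeomorphism; any standard example works, e.g.\ the identity map $(\{0,1\},\text{discrete}) \to (\{0,1\},\text{indiscrete})$. This is inverted by every cocontinuous functor $\Top \to \Set$ but is not itself an isomorphism in $\Top$, contradicting joint conservativity. Hence no colimit sketch can model $\Top$.

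I do not anticipate any serious obstacle: the substantive input (\cref{cocontop}) has already been established, and the rest is a direct application of the dual of \cref{nec}. The only point worth stating carefully is the dualization of \cref{nec}, which is immediate from the observation that a $\C$-valued model of a colimit sketch $(\E,\S)$ is the same data as a $\C^{\op}$-valued model of the limit sketch $(\E^{\op},\S^{\op})$, so that cocontinuous functors out of $\Mod(\S)$ into $\Set$ play the role of continuous functors in \cref{nec}.
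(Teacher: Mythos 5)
Your proposal is correct and is essentially identical to the paper's proof: both combine \cref{nec} with the classification in \cref{cocontop} and a continuous bijection that is not a homeomorphism. The only cosmetic difference is that you invoke ``the dual of \cref{nec}'', whereas \cref{nec} as stated already covers the colimit-sketch case directly via its ``(co)'' convention, so no dualization step is needed.
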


\begin{proof}
Take any continuous map of topological spaces that is bijective, but not an isomorphism. By \cref{cocontop} it gets mapped to an isomorphism by any cocontinuous functor $\Top \to \Set$. The claim follows from \cref{nec}.
\end{proof}

The next result has also been obtained already by Isbell \cite[Theorem 1.4]{isbell3} with different methods. Recall the category $\Top'$ from \cref{topydef}.

\begin{thm} \label{cocontop2}
There is an equivalence of categories
\[\Hom_{\c}(\Top,\Top) \simeq \Top'.\]
\end{thm}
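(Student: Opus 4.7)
The plan is to combine two results already established in the excerpt. The category $\Top$ is cocomplete (classically known, and also a consequence of \cref{topsketchthm} together with \cref{limitsofmodels} and \cref{topprop}), so we may apply \cref{topue} with $\C = \Top$ to obtain a natural equivalence
\[\Hom_{\c}(\Top,\Top) \simeq \CoTop(\Top).\]
Then \cref{topychar} provides the second equivalence
\[\CoTop(\Top) \simeq \Top',\]
and chaining these two equivalences yields the claim. Thus the proof reduces to a single line, and no new argument is required beyond observing that the composite equivalence can be traced explicitly.

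For the convenience of the reader, it is worth unpacking the construction in one direction via \cref{explicittop}: a cocontinuous functor $F : \Top \to \Top$ corresponds to its value $F(1) \in \Top$ equipped with the cotopology $(P \sqcup 1) \otimes F(1) \twoheadrightarrow F(P \cup \{\infty\})$, and the proof of \cref{topychar} then transports this cotopology on $F(1) \in \Top$ to a topology $\tau$ on the set $\O(F(1))$ via the Sierpinski-space representation $\O(F(1)) \cong \Hom_{\Top}(F(1),\IS)$, exhibiting $(F(1),\tau)$ as a topological topology. In the other direction, any topological topology $(X,\tau)$ produces a cotopological space object on $X$ as in the second half of the proof of \cref{topychar}, and then \cref{explicittop} describes the corresponding cocontinuous functor as the tensor product $X \otimes_{\T} - : \Top \to \Top$.

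Since both \cref{topue} and \cref{topychar} are stated as natural equivalences, no additional coherence check is required, and there is no real obstacle to overcome — the entire content of the theorem is packaged in the two prior results. The statement should therefore be recorded as an immediate corollary.
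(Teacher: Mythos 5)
Your proof is correct and is exactly the paper's own argument: combine \cref{topue} applied to $\C=\Top$ with the classification $\CoTop(\Top)\simeq\Top'$ from \cref{topychar}. The extra unpacking via \cref{explicittop} is consistent with the paper and adds nothing that needs checking.
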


\begin{proof}
This is a combination of \cref{topue} and \cref{topychar}.
\end{proof}

\begin{rem}
A topological space $X$ induces a cocontinuous functor $X \times - : \Top \to \Top$ if and only if $X$ is exponentiable if and only if $X$ is core-compact \cite{escardo}, and the resulting topological topology is $X$ with the Scott topology on $\O(X)$. So the category of core-compact spaces embeds fully faithfully into $\Hom_{\c}(\Top,\Top)$. If $X$ is core-compact, it carries the cotopology $X \times (P \sqcup 1) \twoheadrightarrow X \times (P \cup \{\infty\})$, and it follows easily that the topology on the spaces $\Hom(X,Y)$ can be described in terms of nets as follows: A net of continuous maps $(f_p : X \to Y)_{p \in P}$ converges to a continuous map $f : X \to Y$ if and only if for every $x \in X$ and every open neighborhood $f(x) \in V \subseteq Y$ there is some open neighborhood $U$ of $x$ and some $p_0 \in P$ such that $f_p(U) \subseteq V$ for all $p \geq p_0$. The functor $D \circ U : \Top \to \Top$ (where $D : \Set \to \Top$ is the discrete topology, and $U : \Top \to \Set$ is the forgetful functor) is cocontinuous and not of the form $X \times -$, and it corresponds to the trivial cotopological space $1$.
\end{rem}

\begin{rem}
There is a variant of \cref{topue} for Hausdorff spaces: We have seen in \cref{observations} how to define the category $\Haus(\C)$ of Hausdorff topological space objects internal to a complete category $\C$, such that $\Haus(\Set) \simeq \Haus$. This notion can be dualized: We let $\CoHaus(\C) \coloneqq \Haus(\C^{\op})^{\op}$ for a cocomplete category $\C$. Then \cref{topue} implies
\[\Hom_{\c}(\Haus,\C) \simeq \CoHaus(\C).\]
This is because cocontinuous functors $\Haus \to \C$ correspond to cocontinuous functors $\Top \to \C$ whose right adjoint factors through $\Haus$. Also, the limit sketch $\T$ from \cref{topsketch} can be modified to a limit sketch $\T_{\text{Haus}}$ with $\Mod_{\C}(\T_{\text{Haus}}) \simeq \Haus(\C)$: For a directed set $P$ the space $P \cup \{\infty\}$ is Hausdorff if and only if $P$ has no largest element. If $P$ has a largest element $p_{\infty}$, we therefore have to replace $P \cup \{\infty\}$ by its universal Hausdorff quotient, which is just the discrete space $P$ with $\infty \mapsto p_{\infty}$, since a $P$-indexed net in a Hausdorff space converges exactly to its value at $p_{\infty}$.
\end{rem}


\section{A limit sketch for the dual category of topological spaces} \label{sec:topop}

Our study of a limit sketch that models $\Top$ lead to a classification of cocontinuous functors on $\Top$. We also would like to classify \emph{continuous} functors on $\Top$. For this we need a limit sketch that models the dual category $\Top^{\op}$. Notice that $\Top^{\op}$ can be modeled by a realized limit sketch by our results on strongly compact categories (\cref{isbelllimit,saft}), but this sketch is too large to be useful. We will construct a better sketch that is based on the following description of $\Top^{\op}$.

\begin{rem} \label{topbool}
Let $\Frm$ denote the category of frames \cite[Definition II.1.1]{johnstone} and $\CABA$ denote the category of complete atomic boolean algebras. Let $U : \CABA \to \Frm$ be the forgetful functor. By \cite[Proposition 2]{adamekpedicchio} the category $\Top^{\op}$ is equivalent to the full subcategory of the comma category $\id_{\Frm} \downarrow U$ whose objects are the monomorphisms of frames
\[F \hookrightarrow U(B),\]
where $F \in \Frm$, $B \in \CABA$. In fact, $B$ is determined by its set of atoms $X$ with $B \cong P(X)$, and then $F \hookrightarrow P(X)$ is just a topology on $X$. A continuous map $(X,F) \to (Y,G)$ is the same as a morphism of boolean algebras $P(Y) \to P(X)$ that maps $G$ into $F$, which explains the dualization. This equivalence must be seen in the context of pointless topology, where we replace $\Top$ by $\Frm^{\op}$, the category of locales. The CABAs here are exactly what distinguishes topological spaces (spaces with points) from locales (spaces without points).
\end{rem}

This description motivates the following construction of a sketch for $\Top^{\op}$.

\begin{defi}\label{framesketch}
We already discussed that $\CABA$ is monadic over $\Set$ in \cref{caba}, and $\Frm$ is monadic over $\Set$ by \cite[Theorem II.1.2]{johnstone}. Hence, by \cref{sec:lawapp} there are infinitary Lawvere theories and $(\L_F,X)$ and $(\L_B,Y)$ with $\Frm \simeq \Mod(\L_F)$ and $\CABA \simeq \Mod(\L_B)$. There is a morphism of infinitary Lawvere theories $V : (\L_F,X) \to (\L_B,Y)$ that induces the forgetful functor $U : \CABA \to \Frm$ (any term in the theory of frames is also available in the theory of CABAs). We define a category $\E$ by starting with the coproduct category $\L_X \sqcup \L_Y$ and then adding the morphisms
\begin{align*}
\Hom_{\E}(X^I,Y^J) & \coloneqq \Hom_{\L_B}(Y^I,Y^J),\\
\Hom_{\E}(Y^J,X^I) & \coloneqq \Hom_{\L_F}(X^0,X^I).
\end{align*}
The compositions can be defined using $V$, for example
\begin{align*}
\Hom_{\E}(X^K,X^I) \times \Hom_{\E}(X^I,Y^J) & = \Hom_{\L_F}(X^K,X^I) \times \Hom_{\L_B}(Y^I,Y^J) \\
& \xrightarrow{V} \Hom_{\L_B}(Y^K,Y^I) \times \Hom_{\L_B}(Y^I,Y^J) \\
& \xrightarrow{\circ} \Hom_{\L_B}(Y^K,Y^J) \\
&= \Hom_{\E}(X^K,Y^J).
\end{align*}
We leave the other compositions to the reader. The identity of $Y \in \L_B$ corresponds to a morphism $\varphi : X \to Y$ in $\E$. The identities of $X^0 \in \L_F$ and $Y^0 \in \L_B$ correspond to morphisms $Y^0 \to X^0$ and $X^0 \to Y^0$ in $\E$  which are inverse to each other. (This explains why we defined the set $\Hom_{\E}(Y^J,X^I)$ to be non-empty, in fact we want both $X^0$ and $Y^0$ to be terminal in $\E$.) We define $\S$ as the class of cones in $\E$ consisting of the cones $(\pr_i : X^I \to X)_{i \in I}$, $(\pr_j : Y^J \to Y)_{j \in J}$ for all sets $I,J$ as well the monomorphism cone of $\varphi$ (\cref{monocone}). This defines a limit sketch $(\E,\S)$.
\end{defi}

\begin{prop} \label{topopsketch}
The limit sketch $(\E,\S)$ from \cref{framesketch} is realized and satisfies
\[\Mod(\S) \simeq \Top^{\op}.\]
\end{prop}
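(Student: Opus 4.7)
The plan is to unpack what a $\Set$-valued $\S$-model amounts to, match it with the description of $\Top^{\op}$ given in \cref{topbool}, and separately check that every distinguished cone of $\S$ is a genuine limit cone in $\E$.

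For the first step, let $M : \E \to \Set$ be an $\S$-model. The restriction $M|_{\L_F}$ preserves the product cones at the $X^I$ and is thus a $\Set$-valued model of $\L_F$, i.e., a frame $F \coloneqq M(X)$; similarly $M|_{\L_B}$ gives a CABA $B \coloneqq M(Y)$. The morphism $\varphi$ yields a map $f \coloneqq M(\varphi) : F \to B$, and preservation of the monomorphism cone of $\varphi$ makes $f$ injective. The key observation is that the composition rule in \cref{framesketch} identifies $\varphi \circ \chi$ with $V(\chi)$ as morphisms $X^L \to Y$ in $\E$, for every frame operation $\chi \in \Hom_{\L_F}(X^L,X)$. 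Unwinding that a morphism $X^L \to Y^J$ named by $\psi \in \Hom_{\L_B}(Y^L,Y^J)$ must be sent by $M$ to $\psi^B \circ f^L : F^L \to B^L \to B^J$, this identity becomes $f \circ \chi^F = \chi^{U(B)} \circ f^L$, which says precisely that $f : F \to U(B)$ is a frame morphism. Conversely, any injective frame morphism $f : F \hookrightarrow U(B)$ determines a model by setting $M(X^I) \coloneqq F^I$ and $M(Y^J) \coloneqq B^J$ and sending each $\omega \in \Hom_\E(X^I,Y^J) = \Hom_{\L_B}(Y^I,Y^J)$ to $\omega^B \circ f^I$; functoriality collapses to $f$ being a frame morphism. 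Morphisms of models match morphisms in $\id_\Frm \downarrow U$ on the nose, so with \cref{topbool} we obtain $\Mod(\S) \simeq \Top^{\op}$.

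For realizedness, the product cones remain limit cones in $\E$ by direct inspection of the hom-set formulas in \cref{framesketch}: for any source $X^L$ or $Y^L$ the universal property reduces immediately to the corresponding one in $\L_F$ or $\L_B$. The substantive point is that $\varphi$ is monic in $\E$. For a source $Y^L$ the relevant map $\Hom_\E(Y^L,X) \to \Hom_\E(Y^L,Y)$ reduces, via the composition rule and the isomorphism $X^0 \cong Y^0$, to the canonical map $F_\Frm(\emptyset) \to F_\CABA(\emptyset)$, both of which are the two-element lattice $\{0,1\}$. For a source $X^L$ the relevant map is the canonical frame morphism $F_\Frm(L) \to U(F_\CABA(L))$ from the free frame on $L$ to the underlying frame of the free CABA on $L$.

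The main obstacle is the injectivity of this last map, which I would deduce from the spatiality of the free frame. Concretely, the universal property of $F_\Frm(L)$ together with the correspondence between frame morphisms $F_\Frm(L) \to \O(T)$ and continuous maps $T \to \IS^L$ (where $\IS$ is the Sierpinski space) identifies $F_\Frm(L)$ with $\O(\IS^L)$. The function $L \to P(\IS^L)$ sending $l$ to the subbasic open $\{x : x_l = 1\}$ induces on the one hand an injective frame morphism $F_\Frm(L) \hookrightarrow P(\IS^L) = U(P(\IS^L))$, and on the other hand, by the universal property of $F_\CABA(L)$, a CABA morphism $F_\CABA(L) \to P(\IS^L)$. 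These fit into a commuting triangle over $F_\Frm(L)$ whose direct side $F_\Frm(L) \to P(\IS^L)$ is injective, forcing the factor $F_\Frm(L) \to U(F_\CABA(L))$ to be injective as well.
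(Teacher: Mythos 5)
Your proposal is correct and follows essentially the same route as the paper: both unpack $\S$-models as triples $F \hookrightarrow U(B)$ and invoke \cref{topbool}, and both reduce realizedness to the monomorphy of $\varphi$, which comes down to the injectivity of the canonical map from the free frame on $L$ to the underlying frame of the free CABA on $L$. The one substantive difference is that you actually prove this injectivity (identifying the free frame with $\O(\IS^L)$ and factoring its embedding into $P(P(L))$ through the free CABA), whereas the paper merely asserts that $V$ is ``fully faithful'' --- an overstatement, since $V$ is faithful but not full, though faithfulness is all that is needed --- so your final paragraph supplies a justification the paper omits.
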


\begin{proof}
It is clear that $X^I$ is the $I$-fold power of $X$ in $\E$, likewise $Y^J$. Every morphism $X^I \to Y^J$ in $\E$ factors uniquely as $\varphi^I : X^I \to Y^I$ followed by a morphism $Y^I \to Y^J$ in $\L_B$. In particular, $\varphi^I$ is an epimorphism. The composition in $\E$ is defined in such a way that for every morphism $s : X^I \to X$ the diagram
\[\begin{tikzcd}[column sep=35pt]
X^I \ar{d}[swap]{s} \ar{r}{\varphi^I} & Y^I \ar{d}{V(s)} \\
X \ar{r}[swap]{\varphi}  & Y
\end{tikzcd}\]
commutes. Since $V : \L_F \to \L_B$ is fully faithful and $\varphi^I$ is an epimorphism, it follows that $\varphi$ is a monomorphism. Hence, $\S$ is realized and $\Mod(\S)$ is the category of triples $(F,B,\psi)$, where $F$ is a frame, $B$ is a complete atomic boolean algebra and $\psi : F \hookrightarrow U(B)$ is a monomorphism of frames. This category is equivalent to $\Top^{\op}$ by \cref{topbool}.
\end{proof}

\begin{lemma} \label{smallagain}
For the limit sketch $(\E,\S)$ from \cref{framesketch} every model is small.
\end{lemma}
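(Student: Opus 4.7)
The strategy is to apply \cref{smallchar} by choosing a suitable small full subcategory, as in the proof of \cref{topsmall}. Given an $\S$-model $M$, take $\E' \subseteq \E$ to be the full subcategory on the objects $X^I$ for subsets $I \subseteq M(X)$ together with $Y^J$ for subsets $J \subseteq M(Y)$. Since $M(X)$ and $M(Y)$ are sets, so is this collection of objects, and $\E'$ is small; it contains $X$, $Y$, $X^0$, $Y^0$, and the morphism $\varphi$.

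It will then suffice to verify that the restriction map $\Hom(M, N) \to \Hom(M|_{\E'}, N|_{\E'})$ is bijective for every $\S$-model $N$. Injectivity is immediate from the product cones in $\S$: any $\alpha : M \to N$ satisfies $\alpha_{X^I} = \alpha_X^I$ and $\alpha_{Y^J} = \alpha_Y^J$, so $\alpha$ is determined by its restriction to $\E'$. For surjectivity, given $\beta : M|_{\E'} \to N|_{\E'}$, I define its extension by $\alpha_{X^I} \coloneqq \beta_X^I$ and $\alpha_{Y^J} \coloneqq \beta_Y^J$ for arbitrary index sets, and must check naturality against every morphism of $\E$. By factoring through products and through $\varphi$, this reduces to three cases: naturality with respect to $\varphi$ (which is part of the datum of $\beta$), and with respect to operations $\omega : X^I \to X$ in $\L_F$ or $\psi : Y^J \to Y$ in $\L_B$ of arbitrary arity.

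The crux of the argument is handling a large-arity operation $\omega$. For $a \in M(X)^I$, let $A \coloneqq a(I) \subseteq M(X)$ and factor $a = j \circ g$ with $g : I \twoheadrightarrow A$ surjective and $j : A \hookrightarrow M(X)$ the inclusion. Since $A \subseteq M(X)$, the object $X^A$ lies in $\E'$, hence so does the morphism $\omega \circ g^* : X^A \to X$ in $\L_F$. Naturality of $\beta$ at this morphism, evaluated on $j \in M(X)^A$, unfolds via the identities $M(g^*)(j) = j \circ g = a$ (and the parallel ones for $N$) to the desired equation $\beta_X(M(\omega)(a)) = N(\omega)(\beta_X \circ a)$. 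An identical argument using the bound $|M(Y)|$ handles $\L_B$.

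The main obstacle is purely organisational: ensuring that $\E'$ is rich enough to witness every large-arity operation after reduction to its image, while remaining small. The algebraic fact that makes the reduction work uniformly for every operation (not just joins and meets) is simply the functoriality of $(-)^*$ in $\L_F$ and $\L_B$: composing $\omega$ with the reindexing $g^*$ produces a bounded-arity operation whose evaluation on $j$ reproduces the original evaluation on $a$. Once this is in place, naturality on $\E'$ automatically propagates to naturality on all of $\E$, and \cref{smallchar} concludes.
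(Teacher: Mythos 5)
Your proof is correct and follows the same skeleton as the paper's: both invoke \cref{smallchar} with the small full subcategory of powers indexed by subsets of the underlying sets $M(X)$ and $M(Y)$. The difference lies in how the bijectivity of the restriction map is justified. The paper appeals to the specific structure theory of frames and CABAs — a map of underlying sets is a frame (resp.\ CABA) morphism as soon as it preserves finite infima and suprema of \emph{subsets} (resp.\ complements and suprema of subsets) — so the arity bound comes from knowing the generating operations and that a supremum of a family equals the supremum of its image. Your image-factorization argument ($a = j \circ g$, replace $\omega$ by $\omega \circ X^g$ of arity $a(I) \subseteq M(X)$) establishes the same bound uniformly for \emph{every} operation of the two infinitary Lawvere theories, without using any presentation of frames or CABAs; this is cleaner, handles the gluing along $\varphi$ explicitly, and as a byproduct would reprove \cref{lawsmall} directly from \cref{smallchar} without passing through monadicity and the coequalizer presentation of algebras. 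One small point to make precise: $\E'$ contains $X$ and $Y$ only up to isomorphism (as $X^{\{x\}}$ for some $x \in M(X)$, which exists since frames and CABAs are nonempty), or you can simply add $X$ and $Y$ to $\E'$ outright; either way the argument goes through.
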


\begin{proof}
This is an easy consequence of \cref{smallchar} and the following two observations: First, a map between the underlying sets of two frames $|F| \to |F'|$ is a morphism of frames $F \to F'$ if and only if infima of finite subsets of $F$ and suprema of subsets of $|F|$ are preserved -- so we do not need to consider families of elements indexed by arbitrary large sets, but we can bound them. Secondly, a map between the underlying sets of two complete atomic boolean algebras $|B| \to |B'|$ is a morphism $B \to B'$ if and only if complements of elements in $|B|$ and suprema of subsets of $|B|$ are preserved.
\end{proof}

\cref{topopsketch} yields an alternative definition of internal topological space objects.

\begin{defi}
Let $\C$ be complete category. The category $\Topf(\C)$ of \emph{frame-based topological space objects} internal to $\C$ is defined as the follows: Objects are triples $(F,B,\psi)$, where
\begin{itemize}
\item $F$ is an internal frame in $\C$,
\item $B$ is an internal complete atomic boolean algebra in $\C$,
\item and $\psi : F \hookrightarrow U(B)$ is a monomorphism of internal frames.
\end{itemize}
Here, $U(B)$ is the underlying internal frame of $B$. A morphism $(F,B,\psi) \to (F',B',\psi')$ is a pair of morphisms $F'\to F$, $ B' \to B$ such that the evident square commutes. Equivalently, it is just a morphism $B' \to B$ such that the composition $F' \hookrightarrow U(B') \to U(B)$ factors through $F \hookrightarrow U(B)$. Notice the dualization in the definition of morphisms.
\end{defi}

This definition is very similar to the topos-theoretic topological space objects appearing in the literature \cite{macfarlane,moerdijk,stout}, where $B$ is replaced by a power object of an object (which however is an internal boolean algebra only when the topos is boolean). In our case, we may imagine that $F$ is the frame of open subobjects, whereas $B$ is the boolean algebra of all subobjects of a \emph{non-existing} underlying object of the given space object. So with our definition there is actually no forgetful functor $\Topf(\C) \to \C$.

\begin{rem}
It is clear that the limit sketch $\S$ defined above satisfies
\[\Topf(\C) \simeq \Mod_{\C}(\S)^{\op},\]
and hence $\Topf(\Set) \simeq \Top$. In particular, $\Topf(\C)$ is \emph{cocomplete}. So the situation is quite contrary to the category $\Top(\C)$ from \cref{sec:topspace}, where limits are easy and colimits are hard. For $\Topf(\C)$ colimits are easy and limits are hard (maybe they do not exist at all, even when $\C$ is cocomplete). Every continuous functor $\C \to \D$ induces a \emph{cocontinuous} functor
\[\Topf(\C) \to \Topf(\D).\]
In particular, we get a cocontinuous functor $\Hom(A,-) : \Topf(\C) \to \Topf(\Set) \simeq \Top$ for every object $A \in \C$.
\end{rem}

\begin{thm} \label{contop}
Let $\C$ be a complete category. Then
\[\Hom^{\c}(\Top,\C) \simeq \Topf(\C)^{\op}.\]
Hence, up to this dualization, $\Top$ is the universal complete category with an internal frame-based topological space object.
\end{thm}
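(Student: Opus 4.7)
The plan is to combine Proposition \ref{topopsketch}, Lemma \ref{smallagain}, and the main Theorem \ref{main} via a standard sequence of dualities, with $(\E,\S)$ the sketch from Definition \ref{framesketch}. First I would record the general isomorphism of functor categories
$$\Hom^{\c}(\Top,\C) \;\cong\; \Hom_{\c}(\Top^{\op},\C^{\op})^{\op},$$
valid whenever $\C$ is complete (so that $\C^{\op}$ is cocomplete): the assignment $F \mapsto F^{\op}$ converts preservation of limits into preservation of colimits, and the outer opposite accounts for the fact that a natural transformation $F \to G$ corresponds to a natural transformation $G^{\op} \to F^{\op}$.

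Second, Proposition \ref{topopsketch} gives an equivalence $\Top^{\op} \simeq \Mod(\S)$ with $\S$ realized, and Lemma \ref{smallagain} says every $\S$-model is small. This puts us exactly in the setting of Theorem \ref{main} (with target $\C^{\op}$), yielding
$$\Hom_{\c}(\Mod(\S),\C^{\op}) \;\simeq\; \Mod_{\C^{\op}}(\S^{\op}).$$
Chaining this with the first step produces
$$\Hom^{\c}(\Top,\C) \;\simeq\; \Mod_{\C^{\op}}(\S^{\op})^{\op}.$$
Now the duality clause of Definition \ref{sketchdef} provides the canonical isomorphism $\Mod_{\C^{\op}}(\S^{\op}) \cong \Mod_{\C}(\S)^{\op}$, whence
$$\Hom^{\c}(\Top,\C) \;\simeq\; \left(\Mod_{\C}(\S)^{\op}\right)^{\op} \;=\; \Mod_{\C}(\S),$$
and the identification $\Topa(\C) \simeq \Mod_{\C}(\S)^{\op}$ recorded in the remark preceding the theorem gives $\Mod_{\C}(\S) \simeq \Topa(\C)^{\op}$, which is the desired equivalence.

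There is essentially no obstacle: the hard work has already been absorbed into the explicit construction of the sketch in Definition \ref{framesketch}, the identification of its models with $\Top^{\op}$ in Proposition \ref{topopsketch}, and the smallness argument of Lemma \ref{smallagain}, after which Theorem \ref{main} does the rest. The only point requiring attention is bookkeeping: one must track the four opposites that appear so that the final equivalence has the correct variance, as a miscount would produce $\Topa(\C)$ in place of $\Topa(\C)^{\op}$. Unwinding the composite equivalence through Theorem \ref{main}, a continuous functor $H : \Top \to \C$ is recovered from its frame-based topological space object by sending a space $X$, presented as a model of $\S^{\op}$, to the limit weighted by that presentation, dual to the tensor product construction in Proposition \ref{tpex}.
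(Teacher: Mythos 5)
Your proposal is correct and follows essentially the same chain of equivalences as the paper's proof: reduce to $\Hom_{\c}(\Mod(\S),\C^{\op})^{\op}$ via \cref{topopsketch}, apply \cref{main} using \cref{smallagain}, and unwind the two dualities $\Mod_{\C^{\op}}(\S^{\op}) \cong \Mod_{\C}(\S)^{\op}$ and $\Topa(\C) \simeq \Mod_{\C}(\S)^{\op}$. Your closing remark about recovering $H$ as a weighted limit dual to the tensor product matches the paper's \cref{unfold}.
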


\begin{proof}
By \cref{topopsketch}, \cref{smallagain} and \cref{main} we get
\begin{align*}
\Hom^{\c}(\Top,\C) & \simeq \Hom^{\c}(\Mod(\S)^{\op},\C) \\
&  \cong \Hom_{\c}(\Mod(\S),\C^{\op})^{\op}  \\
&  \simeq \Mod_{\C^{\op}}(\S^{\op})^{\op} \\ & \cong \Mod_{\C}(\S) \\ & \simeq \Topf(\C)^{\op}. \qedhere
\end{align*}
\end{proof}

\begin{rem} \label{unfold}
Our proof can be unfolded and shows that the equivalence in \cref{contop} is induced by a universal frame-based topological space object internal to $\Top$. It can be described as follows. The underlying topological frame is the Sierpinski space $\IS$ together with the internal frame structure induced by the bijections $\Hom(T,\IS) \cong \O(T)$ and the frame structures on $\O(T)$ for every $T \in \Top$. The underlying topological CABA is the space $\mathbf{2}$ (trivial topology on two points) with the internal CABA structure induced by the bijections $\Hom(T,\mathbf{2}) \cong P(|T|)$ for every $T \in \Top$ and the usual CABA structure on the power sets. The inclusion $\IS \hookrightarrow \mathbf{2}$ is clear. A continuous functor $F : \Top \to \C$ thus corresponds to the object
\[F(\IS) \hookrightarrow F(\mathbf{2})\]
of $\smash{\Topf(\C)^{\op}}$. The inverse construction associates to every $\smash{(F \hookrightarrow U(B)) \in \Topf(\C)^{\op}}$ the continuous functor $\Top \to \C$ that is right adjoint to the functor $\smash{\C \to \Top \simeq \Topf(\Set)}$ mapping $A \in \C$ to $\Hom(A,F) \hookrightarrow \Hom(A,U(B))$.
\end{rem}

\begin{cor} \label{topsetc}
We have $\Hom^{\c}(\Top,\Set) \simeq \Top^{\op}$. Here, a topological space $X$ corresponds to the continuous functor $\Hom(X,-) : \Top \to \Set$.
\end{cor}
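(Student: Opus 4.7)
The plan is a direct specialization: apply \cref{contop} to $\C = \Set$. By definition $\Topa(\C) \simeq \Mod_{\C}(\S)^{\op}$ for the limit sketch $\S$ of \cref{framesketch}, and \cref{topopsketch} gives $\Mod(\S) \simeq \Top^{\op}$; hence $\Topa(\Set) \simeq \Top$. Combining these,
\[\Hom^{\c}(\Top,\Set) \simeq \Topa(\Set)^{\op} \simeq \Top^{\op}.\]

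To pin down the explicit identification --- that the functor associated to $X \in \Top$ is $\Hom(X,-)$ --- I would unwind \cref{unfold}. Under the equivalence of \cref{topbool}, the space $X$ corresponds to the frame-based topological space object $\O(X) \hookrightarrow P(X)$ in $\Set$. According to \cref{unfold}, the corresponding continuous functor $\Top \to \Set$ is the right adjoint of the functor $L : \Set \to \Top \simeq \Topa(\Set)$ sending a set $A$ to the space determined by $\O(X)^A \hookrightarrow P(X)^A$.

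The one computational point to verify is that $L$ is naturally isomorphic to the copower functor $A \mapsto X \otimes A = \coprod_{a \in A} X$. For this I would observe that the atoms of the product CABA $P(X)^A$ are the pairs $(a,x) \in A \times X$, so its underlying set is $A \times X$; and a function $f : A \to \O(X)$ corresponds to the subset $\{(a,x) : x \in f(a)\}$, which ranges exactly over the opens of the disjoint union topology on $\coprod_a X$. Then the adjunction $\Hom_{\Top}(X \otimes A, T) \cong \Hom_{\Set}(A, \Hom_{\Top}(X,T))$ identifies the right adjoint of $L$ with $\Hom(X,-) : \Top \to \Set$, as claimed.

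There is no real obstacle here; once \cref{contop} is in hand the result is essentially a matter of specialization and bookkeeping. The only thing that requires care is the atom-counting argument that matches the frame-based object $\O(X)^A \hookrightarrow P(X)^A$ with the copower $X \otimes A$ in $\Top$, but this is immediate from the structure of product CABAs.
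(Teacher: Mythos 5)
Your proposal is correct and follows the paper's own route: specialize \cref{contop} to $\C=\Set$ and use $\Topa(\Set)\simeq\Top$, with your atom-counting identification of $\O(X)^A\hookrightarrow P(X)^A$ with the copower $X\otimes A$ correctly unwinding \cref{unfold} to confirm that $X$ corresponds to $\Hom(X,-)$. The paper's proof is the same specialization (it also notes, as an alternative, that one could instead invoke strong compactness of $\Top^{\op}$ together with \cref{isbellchar}).
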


\begin{proof}
This follows from \cref{contop} and $\Topf(\Set) \simeq \Top$. Alternatively, we may just use that $\Top^{\op}$ is strongly compact and \cref{isbellchar}.
\end{proof}

Recall the category of topological topologies $\Top'$ from \cref{topydef}.
 
\begin{thm} \label{toptopcont}
The category $\Hom^{\c}(\Top,\Top)$ is equivalent to $\Top'^{\op}$.
\end{thm}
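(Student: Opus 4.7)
The plan is to leverage \cref{adjointgame} together with the already-established \cref{cocontop2}, which requires both $\Top$ and $\Top^{\op}$ to be strongly compact. First I would verify strong compactness of $\Top^{\op}$: by \cref{topopsketch}, the realized limit sketch $(\E,\S)$ of \cref{framesketch} satisfies $\Mod(\S) \simeq \Top^{\op}$, and by \cref{smallagain} every $\S$-model is small, so \cref{modiscompact} applies and gives that $\Top^{\op}$ is strongly compact. Strong compactness of $\Top$ itself is immediate from \cref{saft}, since $\Top$ is cocomplete, wellcopowered, and admits the one-point space as a generator (cf.\ \cref{topiscomp}).

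Then \cref{adjointgame}, applied with $\A = \B = \Top$, produces a natural equivalence
\[
\Hom_{\c}(\Top,\Top) \simeq \Hom^{\c}(\Top,\Top)^{\op}
\]
given by passing to the right, respectively left, adjoint. Combining this with \cref{cocontop2}, which furnishes the equivalence $\Hom_{\c}(\Top,\Top) \simeq \Top'$, yields $\Hom^{\c}(\Top,\Top)^{\op} \simeq \Top'$, and hence $\Hom^{\c}(\Top,\Top) \simeq \Top'^{\op}$.

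An alternative and conceptually more self-contained route, matching the author's remark that this gives a \emph{more simple} proof of \cref{cocontop2}, is to apply \cref{contop} with $\C = \Top$ directly, obtaining $\Hom^{\c}(\Top,\Top) \simeq \Topa(\Top)^{\op}$, and then to establish an equivalence $\Topa(\Top) \simeq \Top'$ by hand. On objects one sends a topological topology $(X,\tau)$ to the triple consisting of $(\O(X),\tau)$ as a topological frame, the power set $P(|X|)$ equipped with the trivial topology as a topological CABA, and the canonical frame inclusion $\O(X) \hookrightarrow P(|X|)$ (automatically continuous since the target carries the trivial topology). The conditions in \cref{topydef} say precisely that the frame operations on $(\O(X),\tau)$ are continuous, which is what is needed for an internal frame in $\Top$. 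The main obstacle in this alternative route would be the inverse direction: one has to show that every internal CABA in $\Top$ is necessarily of the form $P(X)$ with trivial topology, which amounts to a $\Top$-analog of \cref{cotopset} transported across the equivalence $\Set^{\op} \simeq \CABA$. Because of this subtlety it is cleaner in practice to take the dualization route via \cref{adjointgame} outlined above.
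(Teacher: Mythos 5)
Your main argument is correct, but it runs in the opposite direction from the paper's. The paper explicitly observes, in the sentence immediately preceding the theorem, that \cref{adjointgame} makes \cref{toptopcont} formally equivalent to \cref{cocontop2}; its purpose in proving \cref{toptopcont} \emph{independently} is precisely to extract a new, simpler proof of Isbell's \cref{cocontop2}. Accordingly, the paper takes what you call the ``alternative route'': it invokes \cref{contop} with $\C=\Top$ and then proves $\Topa(\Top)\simeq\Top'$ directly. The step you flag as the obstacle --- that every internal CABA in $\Top$ is some $P(X)$ with the trivial topology --- is exactly the content of the paper's argument: it first rules out the three nontrivial topologies on $P(1)\cong\{0,1\}$ (the Sierpinski topologies fail continuity of complementation, the discrete topology fails continuity of $\bigcup:\{0,1\}^{\IN}\to\{0,1\}$ at the zero sequence), and then reduces the general case by showing $P(X)\cong\prod_{x\in X}P(\{x\})$ as topological CABAs. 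Once that is done, an object of $\Topa(\Top)$ is a topological frame $F$ with a monomorphism into a trivially-topologized $P(X)$, i.e.\ exactly a topological topology. Your route via \cref{adjointgame} and \cref{cocontop2} is logically valid (strong compactness of $\Top$ follows from \cref{saft} and of $\Top^{\op}$ from \cref{topopsketch}, \cref{smallagain} and \cref{modiscompact}, as you say), and it is shorter; but it inherits the full weight of the earlier proof of \cref{cocontop2} via \cref{topue} and \cref{topychar}, whereas the paper's proof buys independence from that machinery and hence a genuinely new proof of Isbell's classification. If you want your write-up to serve the same purpose as the paper's, you would need to complete the CABA classification rather than defer it.
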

 
\begin{proof}
By \cref{contop} our task is to prove $\Topf(\Top) \simeq \Top'$. First, we prove that the forgetful functor $\Top \to \Set$ induces an equivalence $\CABA(\Top) \simeq  \CABA(\Set) \simeq \Set^{\op}$, i.e.\ that every topological CABA is $P(X)$ for some set $X$, where $P(X)$ is equipped with the trivial topology. (This is actually equivalent to \cref{cocontop}, but we give a different proof.) The underlying set CABA certainly has this form, and the only additional information we have is that the union operator $\bigcup : P(X)^I \to P(X)$, the finite intersection operator $\bigcap : P(X)^J \to P(X)$ and the complement operator $P(X) \to P(X)$ are continuous maps. If $X=1$ is a point, then $P(1) \cong \{0,1\}$ has only four topologies. The two Sierpinski topologies are invalid because the complement operator is not continuous for these. The discrete topology is not valid since then $\bigcup : \{0,1\}^{\IN} \to \{0,1\}$ is not continuous at the zero sequence. So the trivial topology is the only valid one, and in fact $\mathbf{2}$ is a topological CABA by \cref{unfold}. Now let $X$ be a general set. For $x \in X$ we equip $P(\{x\})$ with the subspace topology from the inclusion $P(\{x\}) \hookrightarrow P(X)$. It is easy to check that $P(\{x\})$ is again a topological CABA, so it carries the trivial topology by what we have seen. The CABA map $P(X) \to P(\{x\})$ induced by $\{x\} \hookrightarrow X$ is continuous since its composition with $P(\{x\}) \hookrightarrow P(X)$ is the continuous map $P(X) \to P(X)$, $A \mapsto A \cap \{x\}$. These induce a morphism of topological CABAs
\[\begin{tikzcd}
P(X) \ar{r} & \prod_{x \in X} P(\{x\}).
\end{tikzcd}\]
We claim that it is an isomorphism. In fact, the inverse map is the composition of continuous maps
\[\begin{tikzcd}
\prod_{x \in X} P(\{x\}) \ar[hook]{r} & \prod_{x \in X} P(X) \ar{r}{\bigcup} & P(X).
\end{tikzcd}\]
Since each $P(\{x\})$ carries the trivial topology, the same is true for $P(X)$. This finishes the proof of $\CABA(\Top) \simeq \Set^{\op}$. It follows that the objects of $\Topf(\Top)$ are monomorphisms of topological frames $\psi : F \hookrightarrow P(X)$, where $P(X)$ carries the trivial topology, which means that continuity of $\psi$ is automatic. So we end up with a topological topology on $X$.
\end{proof}

By \cref{adjointgame} we have $\Hom_{\c}(\Top,\Top) \simeq \Hom^{\c}(\Top,\Top)^{\op}$, so actually \cref{toptopcont} is equivalent to Isbell's \cref{cocontop2}. Thus, we obtain yet another, more simple proof of that theorem. 
 
\appendix


\section{Infinitary Lawvere theories} \label{sec:lawapp}

Lawvere theories are single-sorted finite product theories that provide a formalization of finitary algebraic theories \cite{lawvere,barrwells}. They can be seen as special cases of small limit sketches: The underlying category $\L$ has finite products and consists of the finite powers $1,X,X^2,\dotsc$ of a single object $X$. The distinguished cones of $\L$ are finite product cones $(\pr_i : X^n \to X)_{1 \leq i \leq n}$. We will denote the limit sketch also by $\L$. Thus, $\L$-models are functors $\L \to \Set$ that preserve finite products. They are usually called $\L$-algebras.

In this expository appendix we will look at an infinitary version of Lawvere theories, namely single-sorted product theories where products may have arbitrary sizes. Specifically, we will define them as special limit sketches and prove:
\begin{enumerate}
\item Infinitary Lawvere theories are equivalent to monads on $\Set$ (such that Lawvere theories correspond to finitary monads).
\item The category of models of an infinitary Lawvere theory is equivalent to the category of algebras of its monad.
\end{enumerate}
Both results are well-known and classical, usually attributed to Linton's paper on functorial semantics \cite{linton2}, and (1) was already announced in \cite{linton}, where Linton introduced infinitary Lawvere theories under the name ``varietal theories''. The goal of this appendix is to provide an elementary, beginner friendly (in particular, non-enriched), self-contained (in particular, using neither Beck's monadicty theorem nor Birkhoff's variety theorem), fully detailed and direct proof of (1) and (2). The author could not find such a proof in the literature.
 
In fact, Linton's theory developed in \cite{linton2} is very general, the equivalences (1) and (2) are never explicitly stated, and it is not straight forward how to derive them from the theory, although it is clear from the introduction of the proceedings that Linton had (a generalization of) these theorems in mind. Dubuc \cite{dubuc} was the first one to explicitly state and prove (1) and (2), even in the more general setting of enriched categories. He uses codensity monads and the proofs omit some details. Power \cite{power} also works in the enriched setting, omits routine verifications, only works in the finitary setting, and proves (1) with Beck's monadicity theorem. The enriched theory was subsequently generalized by Nishizawa and Power \cite{nishizawa}, allowing any arities in particular. Lucyshyn-Wright \cite{lucyshyn} takes the concept of arities even further and has given a very general and concise extension of the theory; (1) is a special case of his Theorem 11.8, and (2) is a special case of his Theorem 11.14. In their book on algebraic theories \cite{arv} Adámek, Rosický and Vitale give a beginner-friendly proof, which however is restricted to the finitary case and uses Birkhoff's variety theorem for the finitary case of (2). Manes' book on algebraic theories proves a small part of (2) in \cite[Lemma 1.5.36]{manes1} and leaves (a generalization of) the rest as a guided exercise \cite[Exercise 3.2.7]{manes1}.
 
\begin{defi} \label{lawdef}
An \emph{infinitary Lawvere theory} $(\L,X)$ consists of a category $\L$ (which we assume to be locally small, as always) and a distinguished object $X \in \L$ such that for every set $I$ a power $X^I$ exists and every object of $\L$ is isomorphic to such a power. We do not require that every object is \emph{equal} to some specified power since this would violate the principle of equivalence. The corresponding (possibly large) realized limit sketch consisting of the product cones $(\pr_i : X^I \to X)_{i \in I}$ for sets $I$ will also be denoted by $\L$. Thus, a $\C$-valued model of $\L$ is a functor $M : \L \to \C$ such that for every set $I$ the cone $(M(\pr_i) : M(X^I) \to M(X))_{i \in I}$ is a product cone in $\C$. We often abbreviate $(\L,X)$ by $\L$. 
\end{defi}

\begin{rem} \label{lawrem}
Let $(\L,X)$ be an infinitary Lawvere theory. The category $\L$ has all products, and we have a product-preserving functor $\Set^{\op} \to \L$, $I \mapsto X^I$, which is essentially surjective (and this is how Lawvere theories are often defined). Notice that $\C$-valued $\L$-models are just product-preserving functors $M : \L \to \C$. They can also be described as follows: We have an object $M(X) \in \C$, called the \emph{underlying object} of $M$, whose powers exist, and for every pair of sets $I,J$ a map
\[M : \Hom(X^I,X^J) \to \Hom(M(X)^I,M(X)^J)\]
which are compatible with compositions and projections. Equivalently, for all sets $I$ we have maps
\[M : \Hom(X^I,X) \to \Hom(M(X)^I,M(X))\]
which are compatible with compositions and projections. Explicitly:
\begin{enumerate}
\item For every set $I$ and every $i \in I$ we have $M(\pr_i : X^I \to X) = \pr_i : M(X)^I \to M(X)$.
\item If $f : X^I \to X$ is a morphism and $(g_i : X^J \to X)_{i \in I}$ is a family of morphisms in $\L$, then
\[M\bigl(f \circ (g_i)_{i \in I} : X^J \to X\bigr) = M(f) \circ (M(g_i))_{i \in I} : M(X)^J \to M(X).\]
\end{enumerate}
This is more close to the usual description of an algebraic object as an underlying object $M(X)$ together with $I$-ary operations $M(f) : M(X)^I \to M(X)$ for every operation symbol $f \in \Hom(X^I,X)$, but one can argue that the description as a functor preserving products is much simpler. Equations between the operations are just encoded in the category $\L$. A morphism of $\L$-models $M \to N$ is equivalent to a morphism $M(X) \to N(X)$ in $\C$ that is compatible with the operations in the obvious sense. The forgetful functor
\[U : \Mod_{\C}(\L) \to \C, ~ M \mapsto M(X)\]
is faithful and conservative. In particular, $\Mod_{\C}(\L)$ is locally small and hence a category in our sense (which also follows from \cref{locallysmall}). When $\C$ is complete, it is also complete by \cref{limitsofmodels}.
\end{rem}

\begin{rem} \label{finitary}
Finitary Lawvere theories can be extended to infinitary Lawvere theories as follows: Let $(\L,X)$ be a finitary Lawvere theory, so $\L$ has finite products and every object of $\L$ is isomorphic to some finite power of $X$. We define an infinitary Lawvere theory $(\L^{\infty},X)$ as follows: The objects are symbols $X^I$ for sets $I$, and the morphisms are defined by
\[\Hom_{\L^{\infty}}(X^I,X^J) \coloneqq \left(\colim_{E \subseteq I \text{ finite}} \Hom_{\L}(X^E,X)\right)^J.\]
Thus, for finite sets $I,J$ we have $\Hom_{\L^{\infty}}(X^I,X^J)=\Hom_{\L}(X^I,X^J)$. The composition is easy to write down. Models of $(\L^{\infty},X)$ are the same as models of $(\L,X)$. Because of this, the term \emph{general Lawvere theory} is perhaps more appropriate, also in light of their correspondence to \emph{general} monads in \cref{lawmon} below.
\end{rem}

\begin{defi}
There is a strict $2$-category $\Law_{\infty}$ (not locally small) of infinitary Lawvere theories defined as follows: A morphism $(\L,X) \to (\K,Y)$ is a functor $F : \L \to \K$ preserving products together with an isomorphism $h : F(X) \to Y$. (It seems to be more common to require $F(X)=Y$, but this violates the principle of equivalence.) We define a $2$-morphism $\alpha : (F,h) \to (G,k)$ to be a morphism of functors $\alpha : F \to G$ with $k \circ \alpha(X) = h$. It follows that $\alpha$ is unique (if it exists) and an isomorphism. So the hom-categories of $\Law_{\infty}$ are setoids, and $\Law_{\infty}$ can therefore be replaced by an equivalent $1$-category. Clearly, any morphism $(\L,X) \to (\K,Y)$ induces a continuous functor $\Mod(\K) \to \Mod(\L)$ over $\Set$.
\end{defi}

\begin{lemma} \label{freemodel}
Let $\L$ be an infinitary Lawvere theory. The forgetful functor $U : \Mod(\L) \to \Set$, $M \mapsto M(X)$ has a left adjoint $F : \Set \to \Mod(\L)$, namely $F(I) \coloneqq \Hom(X^I,-)$.
\end{lemma}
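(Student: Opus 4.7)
The plan is to combine the Yoneda lemma with the definition of a model. First I would verify that $F(I) = \Hom(X^I,-)$ actually lands in $\Mod(\L)$. Since an infinitary Lawvere theory $\L$ is a realized limit sketch (its distinguished product cones $(\pr_i : X^I \to X)_{i \in I}$ are genuine limit cones in $\L$ by definition), every representable functor $\Hom(A,-) : \L \to \Set$ preserves these cones and hence is an $\L$-model, as already noted in the discussion following \cref{sketchdef}. In particular $F(I) \in \Mod(\L)$ for each set $I$.

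Next I would upgrade $F$ to a functor $\Set \to \Mod(\L)$. Any map $f : I \to J$ gives rise to a morphism $X^f : X^J \to X^I$ in $\L$, defined as the unique morphism with $\pr_i \circ X^f = \pr_{f(i)}$ for all $i \in I$. Setting $F(f) \coloneqq \Hom(X^f,-) : \Hom(X^I,-) \to \Hom(X^J,-)$ turns $F$ into a functor, by the double contravariance of $X^{(-)}$ and $\Hom(-,-)$.

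For the adjunction, fix $M \in \Mod(\L)$ and a set $I$. The Yoneda lemma, applied to the functor $M : \L \to \Set$ at the object $X^I$, gives a natural bijection
\[\Hom_{\Hom(\L,\Set)}(\Hom(X^I,-), M) \cong M(X^I).\]
Since $\Mod(\L)$ is a full subcategory of $\Hom(\L,\Set)$ (morphisms of models are just natural transformations of underlying functors), the left-hand side is exactly $\Hom_{\Mod(\L)}(F(I),M)$. Because $M$ preserves products, the projections induce an isomorphism $M(X^I) \cong M(X)^I = U(M)^I = \Hom_{\Set}(I,U(M))$. Composing these bijections yields
\[\Hom_{\Mod(\L)}(F(I),M) \;\cong\; \Hom_{\Set}(I,U(M)).\]

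The only thing left is naturality of this composite bijection in $I$ and $M$. Naturality in $M$ is immediate from the Yoneda lemma. Naturality in $I$ follows because both sides are functorial in $I$ in a matching way: the map $X^f : X^J \to X^I$ induces precomposition with $F(f)$ on the left, while on the right side a map $f : I \to J$ acts by precomposition with $f$, and both correspond through the isomorphism $M(X^I) \cong M(X)^I$ to the same permutation of components (this is the only tiny bookkeeping step, which reduces to $\pr_i \circ X^f = \pr_{f(i)}$ after evaluating at $X^I$ and $X^J$). I do not expect a genuine obstacle here; the subtlest point is merely the bookkeeping for naturality in $I$.
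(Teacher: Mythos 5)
Your proposal is correct and follows exactly the paper's argument: the Yoneda lemma gives $\Hom(\Hom(X^I,-),M) \cong M(X^I)$, and product preservation gives $M(X^I) \cong M(X)^I = \Hom(I,U(M))$. The extra verifications you include (that representables are models since the sketch is realized, functoriality of $F$, and naturality) are routine details the paper leaves implicit.
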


\begin{proof}
For any $M \in \Mod(\L)$ we have
\[\Hom\bigl(\Hom(X^I,-),M\bigr) \cong M(X^I) \cong M(X)^I = \Hom(I,M(X)),\]
where the first isomorphism is by the Yoneda Lemma.
\end{proof}

\begin{thm} \label{ismonadic}
For every infinitary Lawvere theory $\L$ the forgetful functor $\Mod(\L) \to \Set$ is monadic. Thus, there is an equivalence
\[\Mod(\L) \simeq \Alg(\T)\]
for a monad $\T$ on $\Set$. This equivalence maps representable $\L$-models onto free $\T$-algebras.
\end{thm}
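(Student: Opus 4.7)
The plan is to construct the equivalence $\Mod(\L) \simeq \Alg(\T)$ directly, without appealing to Beck's monadicity theorem. By \cref{freemodel} the forgetful functor $U$ has a left adjoint $F$, and this adjunction yields a monad $\T = (T,\eta,\mu)$ on $\Set$ with
\[T(I) = UF(I) = \Hom_{\L}(X^I, X),\qquad \eta_I(i) = \pr_i,\qquad \mu_I(h) = h \circ \epsilon^*_I,\]
where $\epsilon^*_I : X^I \to X^{T(I)}$ is the canonical morphism characterized by $\pr_\phi \circ \epsilon^*_I = \phi$ for each $\phi \in T(I)$. The standard comparison functor $K : \Mod(\L) \to \Alg(\T)$ sends a model $M$ to $(M(X), \alpha_M)$ with $\alpha_M(f) = M(f)(\id_{M(X)})$, where $\id_{M(X)} \in M(X)^{M(X)} = M(X^{M(X)})$.

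To show $K$ is an equivalence, I will construct an explicit pseudo-inverse $L$. For a $\T$-algebra $(A,\alpha)$, define $M_A \coloneqq L(A,\alpha)$ on the generating object by $M_A(X^I) \coloneqq A^I$ and, for an $I$-ary operation $f : X^I \to X$, by
\[M_A(f)(a) \coloneqq \alpha(f \circ X^a) \qquad \text{for } a : I \to A,\]
where $X^a : X^A \to X^I$ is the morphism in $\L$ induced by $a$. Any morphism $X^I \to X^J$ is then handled componentwise. Preservation of projections is immediate from the unit axiom: $M_A(\pr_i)(a) = \alpha(\pr_{a(i)}) = \alpha(\eta_A(a(i))) = a(i)$.

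The main obstacle is verifying that $M_A$ respects composition. Concretely, for $f : X^I \to X$ and $g = (g_i)_{i \in I} : X^J \to X^I$ and $b : J \to A$, I must show $M_A(f \circ g)(b) = M_A(f)(M_A(g)(b))$. The trick is to consider the map $\sigma : I \to T(A)$, $i \mapsto g_i \circ X^b$, which gives an element $f \circ X^\sigma \in T^2(A)$. A direct calculation shows that $T(\alpha)(f \circ X^\sigma) = f \circ X^{\alpha \circ \sigma} = f \circ X^{M_A(g)(b)}$, while $\mu_A(f \circ X^\sigma) = f \circ X^\sigma \circ \epsilon^*_A = f \circ g \circ X^b$ (using $\pr_i \circ X^\sigma \circ \epsilon^*_A = \sigma(i) = g_i \circ X^b$). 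The $\T$-algebra associativity $\alpha \circ T(\alpha) = \alpha \circ \mu_A$ then gives exactly the required equation.

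It remains to check that $K$ and $L$ are mutually inverse and that representables correspond to free algebras. For $(A,\alpha)$, the algebra structure on $M_A(X) = A$ is $f \mapsto M_A(f)(\id_A) = \alpha(f \circ \id_{X^A}) = \alpha(f)$, so $KL = \id$. Conversely, for $M \in \Mod(\L)$, unfolding $M_{K(M)}(f)(a) = \alpha_M(f \circ X^a) = M(f \circ X^a)(\id_{M(X)}) = M(f)(M(X^a)(\id_{M(X)})) = M(f)(a)$ shows $LK = \id$, with the last step using that $M(X^a) = (M(X))^a$ sends $\id_{M(X)}$ to $a$. Finally, for $F(I) = \Hom_{\L}(X^I,-)$, we have $K(F(I)) = (T(I), \alpha_{F(I)})$, and $\alpha_{F(I)}(h) = F(I)(h)(\id_{T(I)}) = h \circ \epsilon^*_I = \mu_I(h)$, so $K$ carries the representable $\L$-model $F(I)$ onto the free $\T$-algebra on $I$.
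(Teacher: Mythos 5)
Your construction is essentially the paper's own proof: the same explicit description of the monad ($T(I)=\Hom_{\L}(X^I,X)$, $\eta_I(i)=\pr_i$, $\mu_I(h)=h\circ u_I$ with $u_I$ your $\epsilon^*_I$), the same comparison functor via evaluation at the universal element $\id_{M(X)}\in M(X^{M(X)})$, and the same inverse construction $M_A(f)(a)\coloneqq\alpha(f\circ X^a)$ with the composition axiom verified through the element $f\circ X^\sigma\in T^2(A)$ and the associativity law $\alpha\circ T(\alpha)=\alpha\circ\mu_A$. The only difference is organizational: the paper shows the comparison functor is faithful, full and surjective on objects, whereas you build an explicit pseudo-inverse $L$ and check the two composites on objects.

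That organizational choice leaves one step unaddressed: you never define $L$ on morphisms, i.e.\ you do not verify that a $\T$-algebra morphism $\beta:(A,\alpha)\to(B,\alpha')$ is automatically a morphism of models $M_A\to M_B$. This is exactly the fullness argument in the paper, and without it your object-level identities $KL=\id$ and $LK=\id$ only give a bijection on objects, not an equivalence of categories. The missing verification is routine and of the same flavor as your composition check: for $f\in T(I)$ and $a:I\to A$ one computes
\[\beta\bigl(M_A(f)(a)\bigr)=\beta\bigl(\alpha(f\circ X^a)\bigr)=\alpha'\bigl(T(\beta)(f\circ X^a)\bigr)=\alpha'\bigl(f\circ X^{\beta\circ a}\bigr)=M_B(f)(\beta\circ a),\]
using that $\beta$ commutes with the structure maps. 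Once this is inserted, $L$ is a functor, both composites are the identity on morphisms as well (both $K$ and $L$ act as the identity on underlying maps), and your proof is complete and matches the paper's.
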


\begin{proof}
We divide the proof into six parts.
 
\textbf{Description of the Monad.} First, we need to describe the monad $\T = (T,\eta,\mu)$ on $\Set$ induced by the adjunction $F \dashv U$ from \cref{freemodel}. The functor $ T : \Set \to \Set$ is defined by $T \coloneqq U \circ F$, that is
\[T(I) = \Hom_{\L}(X^I,X).\]
This is the set of $I$-ary operations of the Lawvere theory. If $ f : I \to J$ is a map, the induced map $T(f) : T(I) \to T(J)$ is defined by $T(f)(t) = t \circ X^f$, where $X^f : X^J \to X^I$ is defined by $\pr_i \circ X^f = \pr_{f(i)}$. The unit $\eta : \id_{\Set} \to T$ is defined by the projections
\[\eta_I : I \to \Hom_{\L}(X^I,X),~ i \mapsto \pr_i.\]
The multiplication $\mu : T^2 \to T$ is defined by $\mu \coloneqq U \varepsilon F$ for the counit $\varepsilon : FU \to \id_{\Mod(\L)}$. For a model $M$ the counit
\[\varepsilon_M : \Hom_{\L}\bigl(X^{M(X)},-\bigr) \to M\]
is induced via Yoneda by, what we call, the \emph{universal element}
\[u_M \in M\bigl(X^{M(X)}\bigr) \cong M(X)^{M(X)}\]
corresponding to the identity on $M(X)$. (It is worthwhile to mention that there is no such universal element in $M(X^{M(X)})$ in the finitary case, since $M(X)$ does not have to be a finite set.) Thus, $\varepsilon_M(f) = M(f)(u_M)$ for morphisms $f : X^{M(X)} \to X^J$. When $M=\Hom_{\L}(X^I,-)$ is a free model, let us just write $u_I$ for the universal element. It follows that $\mu$ is given by
\[\mu_I : \Hom_{\L}(X^{\Hom_{\L}(X^I,X)},X) \to \Hom_{\L}(X^I,X),~ f \mapsto f \circ u_I,\]
where the universal element
\[u_I : X^I \to X^{\Hom_{\L}(X^I,X)}\]
is characterized by
\[\pr_g \circ u_I = g\]
for all $g : X^I \to X$. This finishes the description of the monad.

\textbf{Properties of the universal element.} The universal element $u_M \in M\bigl(X^{M(X)}\bigr)$ has the following property: For every map $f : I \to M(X)$ we have
\[M(X^f)(u_M) = f\]
under the identification $M(X^I) \cong M(X)^I$. In particular, every map $t : I \to T(J)$ satisfies
\[X^t \circ u_J = (t_i)_{i \in I}.\]
Also notice that $u_I : X^I \to X^{T(I)}$ is natural in $I$: If $f : I \to J$ is a map, we have
\[u_I \circ X^f = X^{T(f)} \circ u_J,\]
which can be simply checked by composing with the projections $X^{T(I)} \to X$. We will use all of these properties in the following.

\textbf{Description of the comparison functor.} The canonical comparison functor
\[C : \Mod(\L) \to \Alg(\T)\]
maps, by definition, a model $M : \L \to \Set$ to the $\T$-algebra $(U(M), U(\varepsilon_M))$. Explicitly, this is the set $M(X)$ with the action
\[h_M : T(M(X)) = \Hom_{\L}\bigl(X^{M(X)},X\bigr) \xrightarrow{M} \Hom_{\Set}\bigl(M(X^{M(X)}),M(X)\bigr) \xrightarrow{\ev_{u_M}} M(X).\]
We need to prove that $C$ is an equivalence of categories (in fact, $C$ is an isomorphism of categories). Since $U$ is faithful, $C$ is faithful as well.

\textbf{Fullness of $C$.} To show that $C$ is full, let $M,N$ be two models and let $\beta : M(X) \to N(X)$ be a map that is compatible with the $\T$-actions, i.e.\ the diagram
\[\begin{tikzcd}
T(M(X)) \ar{r}{T(\beta)} \ar{d}[swap]{h_M} & T(N(X)) \ar{d}{h_N} \\
M(X) \ar{r}[swap]{\beta} & N(X)
\end{tikzcd}\]
commutes. We need to prove that for all $t : X^I \to X$, i.e.\ $t \in T(I)$, the diagram
\[\begin{tikzcd}
M(X)^I \ar{r}{\beta^I} \ar{d}[swap]{M(t)} & N(X)^I \ar{d}{N(t)} \\
M(X) \ar{r}[swap]{\beta} & N(X)
\end{tikzcd}\]
commutes. So let $m \in M(X)^I$, which is a map $m : I \to M(X)$. The first diagram applied to the element $T(m)(t) \in T(M(X))$ gives
\[h_N\bigl(T(\beta)(T(m)(t))\bigr) = \beta\bigl(h_M(T(m)(t))\bigr)\]
in $N(X)$. The left side is equal to
\begin{align*}
h_N\bigl(T(\beta \circ m)(t)\bigr) &= h_N\bigl(t \circ X^{\beta \circ  m}\bigr) \\
& = N\bigl(t \circ X^{\beta \circ  m}\bigr)(u_N) \\
& = N(t)\bigl(N(X^{\beta \circ m})(u_N)\bigr)\\
& = N(t)(\beta \circ m) \\
& = N(t)(\beta^I(m)).
\end{align*}
The right side is equal to
\begin{align*}
\beta\bigl(h_M(T(m)(t))\bigr) &= \beta\bigl(M(t \circ X^m)(u_M)\bigr) \\
&= \beta\bigl(M(t)(M(X^m)(u_M))\bigr) \\
&= \beta(M(t)(m)).
\end{align*}
This finishes the proof that $C$ is full.

\textbf{Surjectivity of $C$.} To show that $C$ is (essentially) surjective, let $(A,h : T(A) \to A)$ be a $\T$-algebra, which means $h : \Hom_{\L}(X^A,X) \to A$ is a map satisfying
\begin{align*}
h \circ \eta_A &= \id_A,\\
h \circ T(h) &= h \circ \mu_A.
\end{align*}
By our description of the monad, this means that for all $a \in A$ and $f \in T^2(A)$ we have
\begin{align*}
h(\pr_a) &= a,\\
h\bigl(f \circ X^h\bigr) &= h\bigl(f \circ u_A\bigr).
\end{align*}
We define a model $M \in \Mod(\L)$ with underlying set $M(X) \coloneqq A$ that maps a morphism $t \in T(I) = \Hom_{\L}(X^I,X)$ to the map $M(t) : A^I \to A$ that maps $a \in A^I = \Hom(I,A)$ to
\[M(t)(a) \coloneqq h(T(a)(t)) = h\bigl(t \circ X^a\bigr).\]
For a projection $\pr_i : X^I \to X$ we get
\[M(\pr_i)(a) = h\bigl(\pr_i \circ X^a\bigr) = h(\pr_{a(i)})=a(i),\]
so that $M(\pr_i)=\pr_i$. Next, for a morphism $ s : X^I \to X$ and a family of morphisms $(t_i : X^J \to X)_{i \in I}$, i.e.\ $s \in T(I)$ and $t \in \Hom(I,T(J))$, we need to prove
\[M(s) \circ (M(t_i))_{i \in I} = M(s \circ (t_i)_{i \in I})\]
as maps $A^J \to A$. So let $a \in A^J = \Hom(J,A)$. Define $f \in T^2(A)$ as the composition
\[f : X^{T(A)} \xrightarrow{X^{T(a)}} X^{T(J)} \xrightarrow{~X^t~} X^I \xrightarrow{~~s~~} X.\]
By assumption we have $h\bigl(f \circ X^h\bigr) = h\bigl(f \circ u_A\bigr)$. The left side is equal to
\begin{align*}
h\bigl(f \circ X^h\bigr) &= h\bigl(s \circ X^t \circ X^{T(a)} \circ X^h\bigr) \\
&= h\bigl(s \circ X^{h \circ T(a) \circ t}\bigr) \\
&= h\bigl(s \circ X^{i \,\mapsto\, h(T(a)(t_i))}\bigr)\\
&  = h\bigl(s \circ X^{i \,\mapsto\, M(t_i)(a)}\bigr)  \\
&= M(s)\bigl((M(t_i))_{i \in I}(a)\bigr).
\end{align*}
The right side is equal to
\begin{align*}
h\bigl(f \circ u_A\bigr) &= h\bigl(s \circ X^t \circ X^{T(a)} \circ u_A\bigr) \\
& = h\bigl(s \circ X^t \circ u_J \circ X^a\bigr) \\
& = h\bigl(s \circ (t_i)_{i \in I} \circ X^a\bigr) \\
& = M(s \circ (t_i)_{i \in I})(a).
\end{align*}
This shows that $M$ is, in fact, a model of $\L$. We have $C(M) = (M(X),h_M) = (A,h)$, since $M(X) = A$ holds by definition and $h_M : T(M(X)) \to M(X)$ equals $h$ since for $f \in T(A)$ we have
\[h_M(f) = M(f)(u_M) = h\bigl(f \circ X^{u_M}\bigr) = h(f).\]
Thus, $C$ is an isomorphism of categories.

\textbf{Correspondence of free models.} Since the diagram
\[\begin{tikzcd}[column sep=15pt]
\Mod(\L) \ar{rr}{C} \ar{dr} && \Alg(\T) \ar{dl} \\ & \Set & 
\end{tikzcd}\]
commutes, $C$ maps free $\L$-models onto free $\T$-algebras, and the free $\L$-models are just the representable $\L$-models by \cref{freemodel}.
\end{proof}

Let $\Monad$ denote the category of monads on $\Set$ with monad morphisms, which we can regard also as a $2$-category with identity $2$-morphisms.

\begin{thm} \label{lawmon}
There is an equivalence of $2$-categories $\Law_{\infty} \simeq \Monad$.
\end{thm}
 
\begin{proof}
The functor
\[\Law_{\infty} \to \Monad\]
is defined as follows: If $\L$ is an infinitary Lawvere theory, by the proof of \cref{ismonadic} we obtain a monad $\T=(T,\mu,\eta)$ on $\Set$ with $T(I) = \Hom_{\L}(X^I,X)$, $\eta_I(i)=\pr_i$ and $\mu_I(f)= f \circ u_I$. The action on morphisms is clear.

The functor
\[\Monad \to \Law_{\infty}\]
is defined as follows: If $\T$ is a monad on $\Set$, let $\K$ be its Kleisli category, which we see as the full subcategory of $\Alg(\T)$ of all free $\T$-algebras $F(I) = (T(I),\mu_I)$, where $I \in \Set$. Then $\K$ has coproducts, and every object is a coproduct of copies of the free $\T$-algebra $F(1)$ on one generator, so that its opposite $\K^{\op}$ is an infinitary Lawvere theory. The action on morphisms is clear.

First, we show that the composition
\[\Law_{\infty} \to \Monad \to \Law_{\infty}\]
is isomorphic to the identity: If $\L$ is an infinitary Lawvere theory, let $\T=(T,\mu,\eta)$ be its monad and $\K$ be its Kleisli category. Thus, the objects of $\K$ are the free $\T$-algebras with hom-sets
\[\Hom_{\K}(F(I),F(J)) \cong \Hom(I,T(J)) = \Hom(I,\Hom(X^J,X)) \cong \Hom(X^J,X^I).\]
Explicitly, a morphism $f : F(I) \to F(J)$ with underlying map $f : T(I) \to T(J)$ is mapped to $H(f) \coloneqq (f(\pr_i))_{i \in I} : X^J \to X^I$. For another morphism $g : F(J) \to F(K)$ with underlying map $g : T(J) \to T(K)$ we have $H(g \circ f) = H(f) \circ H(g)$, since for every $i \in I$
\begin{align*}
H(g \circ f)_i &= (g \circ f)(\pr_i) \\
& = \bigl(\mu_K \circ T(g \circ \eta_J) \circ f\bigr)(\pr_i) \\
&= \mu_K\bigl(T(g \circ \eta_J)(f(\pr_i))\bigr) \\
&= f(\pr_i) \circ X^{g \circ \eta_J} \circ u_K \\
& = f(\pr_i) \circ \bigl(g(\eta_J(j))\bigr)_{j \in J} \\
& = f(\pr_i) \circ (g(\pr_j))_{j \in J} \\
&= (H(f) \circ H(g))_i.
\end{align*}
Therefore, $F(I) \mapsto X^I$, $f \mapsto H(f)$ defines an equivalence $\K^{\op} \simeq \L$ mapping $F(1) \mapsto X$.

Finally, we show that the composition
\[\Monad \to \Law_{\infty} \to \Monad\]
is isomorphic to the identity: If $\T=(T,\eta,\mu)$ is a monad on $\Set$, $\K$ is its Kleisli category, then the monad $\S$ of the infinitary Lawvere theory $\K^{\op}$ has the underlying functor $S$ defined by
\[S(I) \coloneqq \Hom_{\K^{\op}}(X^I,X) = \Hom_{\K}(F(1),F(I)) \cong  \Hom(1,T(I)) \cong T(I).\]
Explicitly, the bijection $\alpha_I : S(I) \to T(I)$ maps a morphism of $\T$-algebras $f : F(1) \to F(I)$ with underlying map $f : T(1) \to T(I)$ to $(f \circ \eta_1)(\star) \in T(I)$, where $\star \in 1$ is the unique element. We need to check that $\alpha$ is an isomorphism of monads $\S \cong \T$. First, we have for every $i \in I$
\[\alpha_I(\eta_I(i))=\alpha_I(\pr_i) = \alpha_I\bigl(T(i : 1 \to I)\bigr) = T(i : 1 \to I)\bigl(\eta_1(\star)\bigr)=\eta_I\bigl((i : 1 \to I)(\star)\bigr) = \eta_I(i),\]
showing $ \alpha_I \circ \eta_I = \eta_I$. It remains to prove that the diagram
\[\begin{tikzcd}[column sep=35pt]
S^2(I) \ar[swap]{d}{\mu_I} \ar{r}{S(\alpha_I)} & S(T(I)) \ar{r}{\alpha_{T(I)}} & T^2(I) \ar{d}{\mu_I} \\
S(I) \ar[swap]{rr}{\alpha_I} && T(I) 
\end{tikzcd}\]
commutes. For every $f \in S^2(I) = \Hom(F(1),F(S(I))) \subseteq \Hom(T(1),T(S(I)))$ we have
\[\alpha_I(\mu_I(f)) = \alpha_I(u_I \circ f) = u_I(f(\eta_1(\star))),\]
and
\[\mu_I\bigl(\alpha_{T(I)}(S(\alpha_I)(f))\bigr) = \mu_I\bigl(\alpha_{T(I)}(T(\alpha_I) \circ f)\bigr) = \mu_I\bigl((T(\alpha_I) \circ f)(\eta_1(\star))\bigr),\]
so that it suffices to prove
\[u_I = \mu_I \circ T(\alpha_I).\]
Since $u_I$ is a morphism in $\K$ of free $\T$-algebras, this is equivalent to
\[u_I \circ \eta_{S(I)} = \alpha_I,\]
where both sides are maps $S(I) \to T(I)$. The morphism $u_I : F(S(I)) \to F(I)$ of $\T$-algebras is defined by the property that for all $g \in S(I) = \Hom(F(1),F(I))$ we have $u \circ \iota_g = g$, where $\iota_g : F(1) \to F(S(I))$ is the coproduct inclusion of index $g$, i.e.\ $\iota_g = T(g : 1 \to S(I))$. Combining this with the naturality of $\eta$ yields
\begin{align*}
u_I\bigl(\eta_{S(I)}(g)\bigr) &= u_I\bigl(\eta_{S(I)}\bigl((g : 1 \to S(I))(\star)\bigr)\bigr) \\
&= u_I\bigl(T(g : 1 \to S(I))(\eta_1(\star))\bigr) \\
& = g\bigl(\eta_1(\star)\bigr) = \alpha_I(g). \qedhere
\end{align*}
\end{proof}



\begin{thebibliography}{10}
\bibitem{aarts} J. M. Aarts, J. De Groot, R. H. McDowell, \emph{Cotopology for metrizable spaces}, Duke Mathematical Journal 37.2 (1970): 291-295
\bibitem{adamek} J. Adámek, \emph{Colimits of algebras revisited}, Bull. Austral. Math. Soc. 17 (1977): 433-450
\bibitem{ahs} J. Adámek,  H. Horst, G. E. Strecker, \emph{Abstract and concrete categories. The joy of cats}, Reprints in Theory and Applications of Categories, No. 17 (2006): 1-507
\bibitem{adamekpedicchio} J. Adámek, M. C. Pedicchio, \emph{A remark on topological spaces, grids, and topological systems.} Cahiers de Topologie et Géométrie Différentielle Catégoriques 38.3 (1997): 217-226
\bibitem{adamekrosicky}  J. Adámek, J. Rosicky, \emph{Locally presentable and accessible categories}, Cambridge University Press, 1994
\bibitem{arv} J. Adámek, J. Rosicky, E. M. Vitale, \emph{Algebraic theories: a categorical introduction to general algebra}, Cambridge Tracts in Mathematics, Vol. 184. Cambridge University Press, 2010
\bibitem{andre} M. André, \emph{Categories of Functors and Adjoint Functors}, American Journal of Mathematics Vol. 88, No. 3 (Jul., 1966): 529-543
\bibitem{awodey} S. Awodey, \emph{Category theory}, Oxford University Press, 2010
\bibitem{barr} M. Barr, \emph{Relational algebras}, Reports of the Midwest Category Seminar IV, Springer Lecture Notes in Math. 137 (1970): 39-55.
\bibitem{barrpedicchio} M. Barr, M. C. Pedicchio. \emph{$Top^{op}$ is a quasi-variety}, Cahiers de topologie et géométrie différentielle catégoriques 36.1 (1995): 3-10
\bibitem{barrwells} M. Barr, C. Wells, \emph{Toposes, Triples, and Theories}, Reprints in Theory and Applications of Categories, No. 12 (2005): 1-287
\bibitem{bergman} G. M. Bergman, \emph{An invitation to general algebra and universal constructions}, Springer, 2015
\bibitem{birkhoff} G. Birkhoff, \emph{Moore-Smith Convergence in General Topology}, Annals of Mathematics, Second Series, Vol. 38, No. 1 (Jan., 1937): 39-56
\bibitem{borger} R. Börger, W. Tholen, M. B. Wischnewsky, H. Wolff, \emph{Compact and hypercomplete categories}, Journal of Pure and Applied Algebra 21.2 (1981): 129-144
\bibitem{bra1} M. Brandenburg, A. Chirvasitu, \emph{Tensor functors between categories of quasi-coherent sheaves}, Journal of Algebra 399 (2014): 675-692
\bibitem{bra2} M. Brandenburg, \emph{Tensor categorical foundations of algebraic geometry}, PhD thesis, \href{https://arxiv.org/abs/1410.1716}{arXiv:1410.1716} [math.AG], 2014
\bibitem{burroni1} A. Burroni, \emph{Esquisses des catégories à limites et des quasi-topologies}, Esq. Math. 5 (1970)
\bibitem{burroni2} A. Burroni, \emph{Esquisse de topologie}, C. R. Acad. Sc. Paris, t. 271 (27 julliet 1970): 228-230
\bibitem{chirvasitu} A. Chirvasitu, T. Johnson-Freyd, \emph{The fundamental pro-groupoid of an affine 2-scheme}, Applied Categorical Structures (2013), 21(5): 469-522
\bibitem{dubuc} E. J. Dubuc, \emph{Enriched semantics-structure (meta) adjointness}, Rev. Un. Mat. Argentina 25 (1970): 5-26
\bibitem{durov} N. Durov, \emph{Classifying vectoids and generalisations of operads}, \href{https://arxiv.org/abs/1105.3114}{arXiv:1105.3114} [math.AG], 2011
\bibitem{edgar} G. A. Edgar, \emph{The class of topological spaces is equationally definable}, algebra universalis 3.1 (1973): 139-146
\bibitem{ehresmann1} C. Ehresmann, \emph{Esquisses et types des structures algébriques},  Bul. Instit.  Polit.,  Iasi,  XIV,  1968 
\bibitem{ehresmann2} C. Ehresmann, \emph{Introduction  to the theory of structured catégories}, Techn. Rep. 10, Univ. of Kansas, Lawrence, 1966
\bibitem{ehresmann3} C. Ehresmann, Bastiani, A. \emph{Categories of sketched structures}, Cahiers de Topologie et Géométrie Différentielle Catégoriques, Tome 13 (1972) no. 2, 104-214 
\bibitem{eilenberg} S. Eilenberg, \emph{Abstract description of some basic functors}, J. Ind. Math. Soc. 24 (1960): 231–234
\bibitem{escardo} M. Escardó, R. Heckmann, \emph{Topologies on spaces of continuous functions}, Topology proceedings, Vol. 26, No. 2., 2001
\bibitem{ferreira} N. Martins-Ferreira, \emph{From A-spaces to arbitrary spaces via spatial fibrous preorders.} Math. Texts 46 (Categorical methods in algebra and topology) (2014), 221-235
\bibitem{freyd} P. Freyd, \emph{Algebra-valued functors in general and tensor products in particular}, Colloquium Mathematicum (Wroc law) 14(1966): 89–10
\bibitem{gabrielulmer} P. Gabriel, F. Ulmer, \emph{Lokal präsentierbare Kategorien.} Lecture Notes in Mathematics Vol. 221. Springer-Verlag, 1971
\bibitem{guitart} R. Guitart, \emph{Toute théorie est algébrique et topologique}, Cahiers de topologie et géométrie différentielle catégoriques 49.2 (2008): 83-128
\bibitem{hales} A. W. Hales, \emph{On the non-existence of free complete Boolean algebras}, Fundamenta Mathematicae 54 (1962): 45-66
\bibitem{isbell1} J. R. Isbell, \emph{Small subcategories and completeness}, Mathematical systems theory 2.1 (1968): 27-50
\bibitem{isbell2} J. R. Isbell, \emph{General functorial semantics, I.}, American Journal of Mathematics 94.2 (1972): 535-596
\bibitem{isbell3} J. R. Isbell, \emph{Function spaces and adjoints}, Mathematica Scandinavica 36.2 (1975): 317-339
\bibitem{johnstone} P. T. Johnstone, \emph{Stone spaces}, Cambridge Studies in Advanced Mathematics Vol. 3, Cambridge University Press, 1982
\bibitem{kan}  D. M. Kan, \emph{On monoids and their dual}, Bol. Soc. Mat. Mexicana (2) 3 (1958): 52–61
\bibitem{kashi} M. Kashiwara, P. Schapira, \emph{Categories and Sheaves}, Grundlehren der Mathematischen Wissenschaften 332, Springer (2006)
\bibitem{kelly} G. M. Kelly, \emph{Basic concepts of enriched category theory}, volume 64 of London Mathematical Society Lecture Note Series. Cambridge University Press, Cambridge, 1982
\bibitem{kelley} J. L. Kelley, \emph{General topology}, Courier Dover Publications, 2017
\bibitem{lair} C. Lair, \emph{Catégories modelables et catégories esquissables}, Diagrammes 6 (1981): L1-L20
\bibitem{lambek} J. Lambek, \emph{Completions of categories}, Lecture notes in mathematics Vol. 24, Springer, 1966
\bibitem{lawvere} W. Lawvere, \emph{Functorial Semantics of Algebraic Theories}, Reprints in Theory and Applications of Categories 5 (2004), 1-121
\bibitem{linton} F. E. J. Linton, \emph{Some aspects of equational categories}, Proceedings of the Conference on Categorical Algebra. Springer, Berlin, Heidelberg, 1966
\bibitem{linton2} F. E. J. Linton, \emph{An outline of functorial semantics}, In Seminar on triples and categorical homology theory, 7-52, Springer, 1969
\bibitem{linton3} F. E. J. Linton, \emph{Coequalizers in categories of algebras}, In Seminar on triples and categorical homology theory, 75-90, Springer,
1969
\bibitem{lucyshyn} R. B. B. Lucyshyn-Wright, \emph{Enriched algebraic theories and monads for a system of arities}, Theory and Applications of Categories 31 (2016): 101-137
\bibitem{lumsdaine} P. L. Lumsdaine, \emph{A small observation on co-categories}, Theory and Applications of Categories, Vol. 25, 2011, No. 9, 247-250
\bibitem{macfarlane} A. I. Macfarlane, \emph{Topological objects in elementary topoi}, Master's Thesis, The University of New South Wales, 1975
\bibitem{makkai} M. Makkai, R. Paré, \emph{Accessible Categories: The Foundations of Categorical Model Theory: The Foundations of Categorical Model Theory.} Contemporary Mathematics, Vol. 104, American Mathematical Soc., 1989
\bibitem{manes1} E. Manes, \emph{Algebraic theories}, Graduate Texts in Mathematics, Vol. 26, Springer, 1976
\bibitem{manes2} E. Manes, \emph{Monads of sets}, Handbook of algebra. Vol. 3. North-Holland, 2003, 67-153
\bibitem{moerdijk} I. Moerdijk, \emph{Spaced spaces}, Compositio Mathematica, tome 53, no. 2 (1984), 171-209
\bibitem{nishizawa} K. Nishizawa, J. Power, \emph{Lawvere theories enriched over a general base}, Journal of Pure and Applied Algebra 213.3 (2009): 377-386
\bibitem{pedicchio} M. C. Pedicchio, \emph{On the category of topological topologies}, Cahiers de Topologie et Géométrie Différentielle Catégoriques 25.1 (1984): 3-13
\bibitem{pitts} A. Pitts, \emph{On product and change of base for toposes}, Cahiers de Topologie et Géométrie Différentielle Catégoriques 26.1 (1985):43-61
\bibitem{poinsot} L. Poinsot, H-E. Porst, \emph{Internal Coalgebras in Cocomplete Categories: Generalizing the Eilenberg-Watts-Theorem}, Journal of Algebra and Its Applications (to appear),\\\href{https://doi.org/10.1142/s0219498821501656}{https://doi.org/10.1142/s0219498821501656}
\bibitem{power} A. J. Power, \emph{Enriched lawvere theories}, Theory and Applications of Categories 6.7 (1999): 83-93
Springer Lecture Notes in Math. 137 (1970):
\bibitem{pultr} A. Pultr, \emph{The right adjoints into the categories of relational systems}, Reports of the Midwest Category Seminar IV. Springer Lecture Notes in Math. 137 (1970): 100-113
\bibitem{rattray} B.A. Rattray, \emph{Adjoints to functors from categories of algebras}, Communications in Algebra 3:6 (1975): 563-569
\bibitem{stout} L. N. Stout, \emph{Topological space objects in a topos II}, manuscripta math. 17 (1975): 1-14
\bibitem{ulmer} F. Ulmer, \emph{The adjoint functor theorem and the Yoneda embedding}, Illinois Journal of Mathematics 15.3 (1971): 355-361
\bibitem{velebil} J. Velebil, J. Adámek, \emph{A remark on conservative cocompletions of categories}, Journal of Pure and Applied Algebra (2002), 168(1): 107-124
\bibitem{wells} C. Wells, \emph{Sketches: Outline with References}, unpublished draft, 1993,\\ \href{http://citeseerx.ist.psu.edu/viewdoc/summary?doi=10.1.1.217.3531}{http://citeseerx.ist.psu.edu/viewdoc/summary?doi=10.1.1.217.3531}
\bibitem{zhang} D. Zhang, \emph{Sobriety of quantale-valued cotopological spaces}, Fuzzy Sets and Systems 350 (2018): 1-19
\end{thebibliography}
\end{document}